\newtheorem{mainth}{Theorem}
\newtheorem*{theoNoNum}{Theorem}
\newtheorem{proposition}{Proposition}
\newtheorem{theorem}[proposition]{Theorem}
\newtheorem{lemma}[proposition]{Lemma}
\newtheorem{corollary}[proposition]{Corollary}
\newtheorem{definition}[proposition]{Definition}
\newtheorem{remark}[proposition]{Remark}
\def\N{{\mathbb N}}
\def\R{{\mathbb R}}
\def\K{{\mathbb K}}
\newcommand {\CA}{{\mathcal A}}
\newcommand {\CL}{{\mathcal L}}
\newcommand {\CT}{{\mathcal T}}
\def\s{\sigma}
\def\1{ {\hbox{{\it 1}} \!\! I} }
\def\be{\beta}
\def\8{\infty}
\def\disp{}
\newcommand{\wh}{\widehat}
\newcommand{\BBone}{{1\!\!1}}
\theoremstyle{definition}
\begin{document}

\title{Freezing phase transition for the Thue-Morse subshift}
\author{
Nicolas Bédaride\footnote{
    Aix Marseille Université, CNRS,
     I2M, UMR 7373, 13331 Marseille, France.
    Email: \texttt{nicolas.bedaride@univ-amu.fr}} 
    \and 
        Julien Cassaigne\footnote{
     CNRS,
     Aix Marseille Université, I2M UMR 7373, 13331 Marseille, France.
    Email: \texttt{julien.cassaigne@math.cnrs.fr}}
    \and 
       Pascal Hubert\footnote{
    Aix Marseille Université, CNRS,
     I2M, UMR 7373, 13331 Marseille, France.
    Email: \texttt{pascal.hubert@univ-amu.fr}}
    \and 
    Renaud Leplaideur
      \footnote{Université de la Nouvelle-Calédonie 145, Avenue James Cook - BP R4 Nouméa 98 851 - Cedex Nouvelle Calédonie
    \texttt{renaud.leplaideur@unc.nc}}
    \thanks{The two first authors were partially supported by 
ANR Project IZES ANR-22-CE40-0011}
}

\date{}
\maketitle

\begin{abstract}
On the full shift on two symbols, we consider the potential defined by $V(x) = \frac{1}{n}$ where $n$ denotes the longest common prefix between the infinite word $x$ and an element of the subshift associated to the Thue-Morse substitution. Given a non negative real number $\beta$, the pressure function is 
$P(\be):=\sup\left\{h_{\mu}+\be\int V\,d\mu\right\},$ where the supremum is taken over all shift invariant probabilities $\mu$ on the full shift and $h_{\mu}$ is the Kolmogorov entropy.
We prove that there is a freezing phase transition for the potential $V$: For $\beta$ large enough, the pressure $P(\be)$ is equal to zero.
Similar results were previously published by Bruin and Leplaideur in \cite{BL2}, \cite{Bruin-Leplaid-13}  but their proofs contained significant gaps and required substantial clarification. 
\end{abstract}
{\bf Keywords}: thermodynamic formalism, (freezing) phase transition, substitutions, renormalization, grounds states, quasi-crystals.
{\bf AMS classification}: 37A35, 37A60, 37D20, 47N99, 82B26, 82B28.

\section{Introduction}

Given a symbolic dynamical system $(X,\s)$, a potential  $V:X\to\R$, and a non-negative real number $\beta$, the pressure function is defined by 
$$P(\be):=\sup\left\{h_{\mu}+\be\int V\,d\mu\right\},$$
where the supremum is taken over all $\s$-invariant probabilities $\mu$ on $X$ and $h_{\mu}$ is the Kolmogorov entropy. For an invariant measure $\mu$ its pressure is $h_\mu+\beta V$.
An equilibrium state at $\be$ is a measure which realizes the maximum in the previous formula. In our settings, a phase transition at $\be_{0}$ is a point where the pressure function is not analytic. These two notions are important and are well studied.
Given a subshift of finite type and a H\"older potential $V$, Sinai, Ruelle and Bowen (\cite{sinai,Ruelle.78,Bow.08}) proved in their seminal works that there is a unique equilibrium state and that the pressure is an analytic function. Moreover if $V$ has summable variations, then there is an unique equilibrium state and the pressure function is $\mathcal C^1$, see \cite{Walters.75}.
Nevertheless, this is not always the case for other subshifts. Examples of potentials with phase transitions do exist. The first result in this direction was obtained by  Hofbauer    
see \cite{Hofbauer-77}. In the symbolic setting, and in all the following, the space $X$ is the full shift on the alphabet $\{0,1\}$. The value of the potential is equal to $\frac{1}{n+1}$ for each infinite word $x$ starting by $0^n1$ and is equal to $0$ for the word $0^\infty$. This potential is a continuous function on the full shift but not a H\"older function for the classical distance on the full shift.
The integer $n$ is the longest commun prefix between $x$ and the word $0^\infty$. 
For $\beta$ large enough, the only equilibrium state is the Dirac measure at $0^\infty$, which means that the pressure is equal to zero for $\beta$ large enough. This phenomenon is called a freezing phase transition \footnote{In physical terms, $\beta$ is the inverse of the temperature, a freezing phase transition corresponds to a situation when, at finite temperature, the system is frozen.}. The survey by Chazottes and Keller \cite{Chazottes-Keller-08}  gives a nice overview of these matters.
\\

 Bruin and Leplaideur \cite{BL2}, \cite{Bruin-Leplaid-13} wanted to generalise Hofbauer's result by replacing the trivial subshift $\{0^\infty\}$ by subshifts that can be seen as toy models of quasicrystals, namely substitution subshifts. Note that the space $X$ will remain the full shift, we will introduce other subshifts in order to define the potential. Given a subshift 
$\K$ of the full shift on a finite alphabet, if a word $x$ does not belong to $\K$, the value of the potential is $\frac{1}{n^\alpha}$ where $n$ denotes the longest common prefix between $x$ and an element of $\K$ and $\alpha$ is a positive real parameter.
They consider the problem when $\K$ is a subshift, and, to be more specific,when $\K$ is the subshift generated by a fixed point of a primitive substitution like the Thue-Morse substitution or the Fibonacci substitution. In this case, they prove that $\alpha = 1$ is a critical case in the sens that before and after $1$ the results are different: for $\alpha > 1$, there is no phase transition, and for $\alpha< 1$, there is a phase transition. The most delicate situation being $\alpha = 1$. Nevertheless, the proofs of the different results have gaps that cannot be fixed even with hard work. The technical point is a computation where the number of words of length $n$ belonging to the language of $\K$ is not taken into account. The present paper gives a positive answer to this question in the case of the Thue-Morse substitution. As in Bruin-Leplaideur, the proof is based on the understanding of induced potentials but the combinatorial analysis is of a different nature and involves new ideas in word combinatorics. Roughly speaking we are interested in studying substitutions 'from the outside and not from the inside'. We establish the link between
the occurrences of bispecial words in a given infinite word and the behaviour of the distance to $\K$ along the orbit of this infinite word under the shift. In particular we emphasize the importance of the notion of {\it accident}.

Although a more formal version of this result will be presented later in the paper, our main result is as follows:
\begin{theoNoNum}
Let $X$ be the full shift on a two-letter alphabet. Let $\mathrm{TM}$ be the subshift associated to the Thue-Morse substitution and $V(x) = \frac{1}{n}$ where $n$ denotes the longest common prefix between $x\in X$ and an element of $\mathrm{TM}$. There exists $\beta_0$ such that, for $\beta \geq \beta_0$, the only equilibrium state is the zero entropy measure supported  on $\mathrm{TM}$ and the pressure is equal to 0 which means that there is a freezing phase transition. 
\end{theoNoNum}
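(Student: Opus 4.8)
\textit{Strategy and reductions.} Because $V\ge 0$ and $V$ vanishes exactly on $\mathrm{TM}$ (a point has an infinite common prefix with $\mathrm{TM}$ if and only if it lies in the subshift), the measure $\mu_{\mathrm{TM}}$ gives $h_\mu=0$ and $\int V\,d\mu=0$, so $P(\beta)\ge 0$ for every $\beta$. The whole content is therefore the opposite bound $P(\beta)\le 0$ for $\beta\ge\beta_0$, which is equivalent to the entropy--energy inequality
$$ h_\mu \;\le\; \beta\int V\,d\mu\qquad\text{for every }\sigma\text{-invariant probability }\mu. $$
I would establish this through the partition-function description of the pressure, $P(\beta)=\limsup_{N}\tfrac1N\log\sum_{|w|=N}\exp\big(-\beta\inf_{x\in[w]}S_NV(x)\big)$ with $S_NV=\sum_{k=0}^{N-1}V\circ\sigma^k$, and show the sum is subexponential once $\beta\ge\beta_0$: the few words that stay close to $\mathrm{TM}$ carry weight near $1$ but are only polynomially many, while every word that deviates pays an exponential penalty that must beat the crude count $2^N$.

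\textit{Combinatorial core: matching length and accidents.} Set $n_k(x)$ equal to the length of the longest prefix of $\sigma^k x$ lying in the language $L(\mathrm{TM})$, so that $V(\sigma^kx)=1/n_k(x)$. The first step is the elementary recursion describing how $n_k$ evolves: one always has $n_{k+1}\ge n_k-1$, with equality $n_{k+1}=n_k-1$ \emph{unless} the matched prefix is right-special in $L(\mathrm{TM})$, in which case the match may persist or grow. I would call such a failure of the generic decrement an \emph{accident} and show that accidents are controlled by the bispecial factors of $\mathrm{TM}$, whose lengths and positions are explicitly governed by the substitution $\tau$ and occur at the dyadic scales $2^j$. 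Cutting the orbit segment $x_0\cdots x_{N-1}$ at successive deep re-synchronizations with $\mathrm{TM}$ decomposes it into excursions; along an excursion that climbs to a matched length $\sim\ell$ the values $1/n_k$ run through $\tfrac1\ell,\tfrac1{\ell-1},\dots$, so such an excursion contributes a Birkhoff cost $S_NV\gtrsim\log\ell$. This is the mechanism turning closeness to $\mathrm{TM}$ (small $V$) into a genuine logarithmic energy price, and it is most transparent after inducing on a renormalization level, where the induced potential is precisely this per-excursion cost.

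\textit{Counting and the entropy bound.} The matching step is to bound the number of length-$N$ words with a prescribed accident/excursion profile. Since $L(\mathrm{TM})$ has linear complexity, $p_{\mathrm{TM}}(n)=O(n)$, a word that stays synchronized creates no entropy; all branching is concentrated at accidents, where the number of admissible non-$\mathrm{TM}$ continuations is bounded by the local structure of the (bi)special factor. Counting the configurations of total cost at most $C$ by the accidents they contain, I expect a bound of the form $\#\{w:\,\inf_{[w]}S_NV\le C\}\le e^{\kappa C+o(N)}$ for an explicit constant $\kappa$ coming from the branching at accidents and the linear count of $\mathrm{TM}$-factors. Feeding this into the partition sum and choosing $\beta_0=\kappa$ makes $\sum_{|w|=N}e^{-\beta\inf S_NV}$ subexponential, giving $P(\beta)\le 0$; this is exactly the step where the number of words of each length in $L(\mathrm{TM})$ must be handled correctly, the point left open in \cite{BL2,Bruin-Leplaid-13}.

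\textit{Conclusion and main obstacle.} Combining the two inequalities yields $P(\beta)=0$ for $\beta\ge\beta_0$. For uniqueness, any equilibrium state satisfies $h_\mu=\beta\int V\,d\mu$; but for $\beta>\beta_0$ the inequality $h_\mu\le\beta_0\int V\,d\mu<\beta\int V\,d\mu$ is strict as soon as $\int V\,d\mu>0$, forcing $\int V\,d\mu=0$ and hence $\mu=\mu_{\mathrm{TM}}$ by unique ergodicity of $\mathrm{TM}$ (the endpoint $\beta=\beta_0$ following from the supremum defining $\beta_0$ not being attained). \textbf{The main obstacle is the tightness of the two-sided accounting in the counting step}: one must simultaneously lower-bound the logarithmic energy cost of each excursion and upper-bound the entropy (the number of admissible deviations) it carries, uniformly across all dyadic scales $2^j$. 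Getting the constant $\kappa$ right requires the fine combinatorics of bispecial factors and accidents of the Thue--Morse language, and it is exactly this balance --- studying the substitution ``from the outside'' --- that the earlier arguments mishandled.
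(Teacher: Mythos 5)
Your global architecture is genuinely different from the paper's --- you aim at a direct partition-function bound $\sum_{|w|=N}e^{-\beta\inf_{[w]}S_NV}=e^{o(N)}$ yielding the entropy--energy inequality $h_\mu\le\beta\int V\,d\mu$, whereas the paper never estimates a global sum over all words: it induces on a cylinder $J$ defined by a minimal forbidden word, rewrites the induced transfer operator $\mathcal{L}_{0,\beta,V_0}(\BBone_J)$ as a sum over return words of $w_J$ (Propositions \ref{prop-operateur-prod-mot} and \ref{prop-fract-bisp}), bounds it by powers of an infinite matrix indexed by minimal forbidden words (Lemma \ref{lem-maj-Sm}), and then invokes the abstract inducing result, Theorem \ref{thm:meta-phase}, proved in the appendix via Ionescu--Tulcea--Marinescu, to conclude that no equilibrium state charges any such $J$. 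Your identification of accidents with the non-strong bispecial words of $L_{\mathrm{TM}}$, your use of linear complexity, and your uniqueness argument for $\beta>\beta_0$ (equilibrium forces $h_\mu=\beta\int V\,d\mu$, comparison with the inequality at $\beta_0$ forces $\int V\,d\mu=0$, then unique ergodicity) are all sound and consistent with the paper. But there is a genuine gap at the decisive quantitative step, and one of your stated estimates is false as written. The per-excursion lower bound ``an excursion climbing to matched length $\ell$ costs $S_NV\gtrsim\log\ell$'' fails in the presence of chained accidents: by the telescoping identity behind Proposition \ref{prop-fract-bisp}, the cost of the stretch $u^i$ between consecutive accident words is $\log\frac{|u^i|+1}{|v^{i+1}|+1}$, so the accident word $v^{i+1}$ rebates almost all of the climb. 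A word can ride at matched depth $\ell\sim 3\cdot 2^j$ for time $N$ through a chain of overlapping generation-$j$ accidents at total cost only of order $N/\ell$, with no uniform cost floor per accident. Consequently your counting claim $\#\{w:\inf_{[w]}S_NV\le C\}\le e^{\kappa C+o(N)}$ with a \emph{single} constant $\kappa$ valid across all dyadic generations is exactly the hard content of the theorem, not a consequence of $p(n)=O(n)$; as stated it is only an expectation (``I expect a bound of the form\dots''), and a priori the effective $\kappa$ could degrade along the scales $2^j$ if the branching carried by deep accident chains is not controlled.

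What closes this in the paper, and is absent from your sketch, is a concrete mechanism enforcing uniformity over generations: each excursion's cost is split symmetrically (exponents $\beta/2$) between its two bounding accident words, chains of $M$ accidents become entries of the matrix power $A^{M+1}$, and the key inequality $A_{i,j}\le A_{|i-j|,0}$ is proved by \emph{desubstitution} --- using that every word of $L_{\mathrm{TM}}$ of length at least $4$ is uniquely desubstitutable (Lemma \ref{lem-cass}) --- producing the geometric factor $(3/2^{\beta/2})^{|i-j|}$; combined with $p(n)\le 4n$ this gives $AE\le\lambda(\beta)E$ with $\lambda(\beta)\to 0$, hence $\sum_M S_M(w_J)<1$ uniformly in the cylinder (Proposition \ref{prop:final}). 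Mishandling precisely this interplay between the word count of $L_{\mathrm{TM}}$ and the accident rebates is what broke the earlier proofs, so your final paragraph correctly names the obstacle but does not supply the tool that overcomes it; until your counting bound is proved with $\kappa$ uniform in $j$ (by renormalization/desubstitution or an equivalent device), the proposal is a plausible program rather than a proof. Two minor points: the endpoint $\beta=\beta_0$ is waved at (the paper itself only concludes for $\beta>\beta_0$), and if the counting step were established, your route would bypass the entire spectral apparatus of Theorem \ref{thm:meta-phase}, which would be a real simplification --- all the more reason the missing estimate deserves a complete proof.
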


We only have an upper bound on the value of $\beta_0$. A lower bound was obtained by Ishaq and Leplaideur  \cite{Ishaq-Leplaideur-22} for the Fibonacci substitution. The exact value of $\beta_0$ remains an open problem.  \\

{\bf Related works.} Maldonado and Salgado \cite{Maldonado-Salgado} proved analogous results for potentials associated to the middle-third Cantor set. Nevertheless the structure of the middle-third Cantor set is considerably simpler than the one we study. 
While this work was being done, Kucherenko and Quas \cite{Kucherenko-Quas-21} proved very general and impressive results for potentials defined on  two-sided subshift. The existence of phase transitions is related to the regularity of the potential. The speed $\log(n)/n$ is the critical one (the meaning of $n$ is more or less the same as in our approach), and for comparison our speed is $1/n$. In the classical theory, the study of potentials on two-sided shifts is reduced to studying potentials on one-sided shift using a coboundary, see \cite{Walters.75}. This is only possible if the potential is smooth enough, which is not the case in our setting. For non smooth potentials, Kucherenko and Quas can only deal with two-sided subshifts. Thus our result seems of a different nature.
Another result on freezing phase transition can also be found in \cite{Kucherenko-Thompson-20} (Proposition 3), or in \cite{BKL-23} for a theoritical result.

{\bf Acknowledgements}: The authors wish to thank Jean-René Chazottes for useful discussions on a previous version of the paper.

\subsection{Background}
For subshifts and substitutions, we refer to \cite{Durand-Perrin-22} and \cite{Queffelec}.

Let $\mathcal{A}$ be a finite set called the alphabet with cardinality $D\ge 2$. Elements of $\CA$ are called \emph{letters} or \emph{digits}.
A non-empty word is a finite  or infinite string of digits. If $u=u_{0}\ldots u_{n-1}$ is a word, a prefix of $u$ is any word $u_{0}\ldots u_{j}$ with $j\le n-1$. A suffix of $u$ is any word of the form $u_{j}\ldots u_{n-1}$ with $0\le j\le n-1$.  
If $v$ is the  finite word $v=v_0\dots v_{n-1}$ then $n$ is called the length of the word $v$ and is denoted by $|v|$. The set of all finite words over $\mathcal{A}$ is denoted by $\mathcal{A}^*$. 

The shift map is the map defined on $\mathcal{A}^\mathbb{N}$ by $\sigma(u)=v$ with $v_n=u_{n+1}$ for every integer $n$.
We endow $\mathcal A$ with the discrete topology and consider the product topology on $\mathcal A^\N$. This topology is compatible with the distance $d$ on $\mathcal A^{\mathbb N}$ defined by
$$d(x,y)=\frac{1}{2^n}\quad \text{if}\quad n=\min\{i\geq 0,  x_i\neq y_i\}.$$

A substitution $H$ is a map from an alphabet $\mathcal{A}$ to the set $\mathcal{A}^*\setminus\{\epsilon\}$ of nonempty finite words on $\mathcal{A}$. It extends to a morphism of $\mathcal{A}^*$ by concatenation, that is $H(uv)=H(u)H(v)$.

Several basic notions on substitutions are recalled in Section \ref{sec-substi}. We also refer to  \cite{Pyth.02}. We only recall here the notions we need to state our results.

\medskip
Let $H$ be a substitution over the alphabet $\mathcal{A}$, the subshift associated to $H$ is a subset $\K_H$ of $\mathcal A^{\mathbb N}$ such that $x\in\K_H$ if  and only if for every non-negative integers $i, j$ the word $x_i\dots x_{j+i}$ appears in some $H^n(a)$ for a letter $a$. It is called the {\bf subshift} associated to the substitution. 

A subshift $X$ is said to be {\bf minimal} if every orbit under the shift of an element of $X$ is dense in $X$. An {\bf invariant measure} is a probability measure $\mu$ on $X$ such that for every measurable set $A$, we have $\mu(\sigma^{-1}A)=\mu(A)$. A subshift is said to be {\bf uniquely ergodic} it it has only one invariant measure.

In all the following we will restrict to the following example:
\begin{definition}
The Thue Morse substitution is defined on the alphabet $\{0,1\}$ by 
$$\theta:\begin{cases}0\mapsto 01\\ 1\mapsto 10\end{cases}$$
\end{definition}

In all the following we will denote $\mathrm{TM}$ the subshift associated to $\theta$. Moreover $\mathrm{TM}$ is also the orbit closure of a fixed point of $\theta$ under the shift action, and $\mathrm{TM}$ is uniquely ergodic with the unique $\s$-invariant probability denoted by $\mu_{\mathrm{TM}}$, see Lemma \ref{lem-cass}. 

Given a subshift $(\K,\s)$ and a potential  $V:\K\to\R$, the pressure function is defined by 
$$P(\be):=\sup\left\{h_{\mu}+\be\int V\,d\mu\right\},$$
where the supremum is taken over all $\s$-invariant probabilities $\mu$ on $X$ and $h_{\mu}$ is the Kolmogorov entropy. An equilibrium state at $\be$ is a measure which realizes the maximum in the previous formula.

\subsection{Main results}

We will consider the following set $\Xi=\{-\varphi: \mathcal A^{\mathbb N}\rightarrow \mathbb R\}$ of functions defined by:
\begin{itemize}
\item $\varphi(x)=0$ if and only if $x$ belongs to $\K,$ 
\item and $\varphi(x)=\frac{\disp g(x)}{n}+o(\frac1n)$ if $d(x,\K)=2^{-n}$ where $g$ is a positive continuous function 
\end{itemize}

\begin{definition}\label{def-potentiel0}
We consider a function $\varphi_0$ such that $\varphi_0(x)=\log{(1+\frac{1}{n+1})}$ if $d(x,\mathbb K)=2^{-n}$. We denote $V_0=-\varphi_0$. Remark that $V_0$ belongs to $\Xi$.
\end{definition}

Then, our main theorems are the following (the definition of the transfer operator, $\mathcal L_{z,\beta,V}$ is given in Equation \eqref{eq-operator}).

\begin{mainth}\label{thm-tm}
If $\mathrm{TM}$ is the subshift associated to the Thue-Morse substitution and $V\in \Xi$, then there exists $\beta_0$ such that for all $\beta>\beta_0, P(\beta)=0$ and the equilibrium measure is the unique invariant measure supported on $\mathrm{TM}$. Moreover we have $\beta_0<16.6$ if $V=V_0$.
\end{mainth}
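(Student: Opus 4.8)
The inequality $P(\beta)\ge 0$ holds for every $\beta$, since the measure $\mu_{\mathrm{TM}}$ supported on $\mathrm{TM}$ has zero entropy and satisfies $\int V\,d\mu_{\mathrm{TM}}=0$ (as $V\equiv 0$ on $\mathrm{TM}$). So the entire content is the reverse inequality, and the plan is to prove that $P(\beta)\le 0$ once $\beta$ is large. I would work with the partition functions
$$Z_N(\beta)=\sum_{|w|=N}\ \sup_{x\in[w]}\exp\Big(\beta\sum_{k=0}^{N-1}V(\sigma^k x)\Big),$$
whose exponential growth rate is $P(\beta)$ by the variational principle for continuous potentials, and aim to show that $Z_N(\beta)$ grows subexponentially for $\beta$ large. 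For a fixed cylinder $[w]$, the optimal $x$ is the one whose orbit stays as close to $\mathrm{TM}$ as possible, so everything is dictated by the profile of distances to $\mathrm{TM}$ along the orbit.

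\textbf{Distance profile, accidents and telescoping.} For $x$ in the full shift write $d(\sigma^kx,\mathrm{TM})=2^{-n_k}$. The elementary estimate $n_{k+1}\ge n_k-1$ holds always (shifting a near-match of $\mathrm{TM}$ yields a near-match one shorter), with equality along a \emph{plain descent}; the steps where $n_{k+1}>n_k-1$ are the \emph{accidents}, where the orbit re-synchronises with another branch of $\mathrm{TM}$. The first combinatorial task is to build the dictionary between accidents and the bispecial words of the Thue--Morse language, quantifying where accidents may occur and how far $n$ can jump. The second, and the reason the potential $V_0$ is singled out, is telescoping: since $\varphi_0(x)=\log\frac{n+2}{n+1}$ when $d(x,\mathrm{TM})=2^{-n}$, a plain descent from height $n$ down to height $m$ contributes exactly $\log\frac{n+2}{m+1}$ to $\sum_k\varphi_0(\sigma^kx)$. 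Hence the Birkhoff cost collapses to a sum of boundary terms located at the accidents, turning the analytic potential into a combinatorial weight supported on the accident structure.

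\textbf{Counting and the generating function.} I would then cut the optimal orbit of each cylinder into excursions delimited by accidents and returns, so that, through the induced (renewal) structure encoded in $\mathcal{L}_{1,\beta,V}$, the generating series of the $Z_N(\beta)$ factorises as a geometric series in the contribution of a single excursion. The crucial quantitative input --- exactly the point that invalidated the earlier arguments --- is the correct count of how many length-$\ell$ words realise a prescribed excursion, controlled by the (linear) complexity of the Thue--Morse language rather than ignored. Weighting each excursion by $\exp\!\big(\beta\times(\text{telescoped cost})\big)$ and summing $\sum_\ell (\#\{\text{excursions of length }\ell\})\,e^{\beta c_\ell}$, I would show that for $\beta$ large this single-excursion weight is $<1$, so the series converges at $z=1$ and the spectral radius of $\mathcal{L}_{1,\beta,V}$ is $\le 1$, i.e. $P(\beta)\le 0$. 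Making the Thue--Morse counts and the telescoped costs explicit and solving the resulting convergence inequality is what should yield the numerical threshold $\beta_0<16.6$ for $V=V_0$; a general $V\in\Xi$ is then handled by comparison, since its potential agrees with a multiple of $V_0$ at the leading order $g(x)/n$ with $g$ bounded below.

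\textbf{Identifying the equilibrium state and the main obstacle.} Once $P(\beta)=0$ is known, any equilibrium state $\mu$ satisfies $h_\mu+\beta\int V\,d\mu=0$ with $h_\mu\ge 0$ and $\int V\,d\mu\le 0$, so both terms must vanish. Since $V$ is continuous and vanishes exactly on $\mathrm{TM}$, the condition $\int V\,d\mu=0$ forces $\mu(\mathrm{TM})=1$, and unique ergodicity of $\mathrm{TM}$ gives $\mu=\mu_{\mathrm{TM}}$. I expect the main obstacle to be the counting step: establishing the precise accident--bispecial dictionary for Thue--Morse and then estimating, with sharp enough constants, the number of words of length $\ell$ realising each excursion against the telescoped penalty. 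Controlling this entropy-versus-cost balance --- rather than discarding the word count --- is the crux of the argument and precisely where the proofs of \cite{BL2,Bruin-Leplaid-13} broke down.
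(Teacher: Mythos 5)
Your strategy is essentially the paper's: the telescoping of $\varphi_0$ into boundary terms at accidents is Proposition \ref{prop-fract-bisp}, the accident--bispecial dictionary is Lemma \ref{lem:accident-bispecial} together with the classification of non-strong bispecials of Thue--Morse in Lemma \ref{lem-cass}, the insistence on the linear complexity count $p(n)\le 4n$ is precisely the fix of the Bruin--Leplaideur gap, and your identification of the equilibrium state (from $P=0$, $V\le 0$ and unique ergodicity) is a clean shortcut of the paper's conclusion. However, two genuine gaps remain between your sketch and a proof.

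First, the ``single-excursion weight'' is not a scalar, so your geometric series is not well-posed as stated. The telescoped weight of the segment $u^i$ between two accidents is $\bigl(\frac{|v^{i+1}|+1}{|u^i|+1}\bigr)^{\beta}$, and each segment is constrained at \emph{both} ends: $u^i$ must begin with $v^ib_i$ and end with $a_{i+1}v^{i+1}$, where $a_iv^ib_i$ ranges over the countably many minimal forbidden words $a\theta^k(010)b$, $a\theta^k(101)b$ (plus generation zero). Consecutive excursions are therefore coupled through the accident words at their common endpoints, and the correct formalization is the infinite matrix $A$ indexed by minimal forbidden words, with $\sum_M S_M(w_J)=\sum_M(A^{M+1})_{(a,v,b),(a,v,b)}$ (Lemma \ref{lem-maj-Sm}). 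One must then prove a row-sum bound $AE\le\lambda(\beta)E$ \emph{uniformly over all generations} of bispecials, which the paper gets by desubstitution ($A_{i,j}\le A_{|i-j|,0}$ up to symmetry, with a factor $3/2^{\beta/2}$ lost per desubstitution) combined with the complexity count; the numerical threshold comes from the explicit $\lambda(\beta)=(\frac45)^{\beta/2}\bigl[20(\frac45)^{\beta/2}+\frac{6}{2^{\beta/2}-3}\bigr]<\frac12$ for $\beta>16.6$, and the same uniformity is what makes $\beta_0$ independent of the cylinder $J$ (Lemma \ref{lem-indpdt-cylindre}). Asserting that the excursion weight is $<1$ ``for $\beta$ large'' skips the summation over the infinitely many bispecial generations, which is the entire quantitative content.

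Second, the bridge from the induced-operator estimate to $P(\beta)\le 0$ is asserted, not proved, and at the wrong parameter: the relevant evaluation is at $z=0$ (the candidate pressure), i.e. $\mathcal{L}_{0,\beta,V_0}(\BBone_J)(x)<1$, not $z=1$. The implication ``induced bound $\Rightarrow$ every equilibrium state gives zero mass to $J$'' is exactly Theorem \ref{thm:meta-phase}, which the paper proves in a full appendix (quasi-compactness of $\mathcal{L}_{z,\beta,V}$ on H\"older functions via Ionescu-Tulcea--Marinescu, Abramov and Kac formulas, the Gibbs property of the induced equilibrium state, and monotonicity of $z\mapsto\lambda_z$), precisely because no complete proof existed in the literature. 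Your alternative route through the partition functions $Z_N(\beta)$ could in principle bypass this functional analysis, but you abandon it midway and revert to the operator formulation; if you do pursue it, beware that $\sup_{x\in[w]}$ makes $\delta(\sigma^kx)$ for $k$ near $N$ depend on the tail beyond $w$, and that accidents may overlap (Lemma \ref{lem:deux-acc} assumes they do not), so the telescoping needs a separate argument in that case.
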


\begin{mainth}
\label{thm:meta-phase}
Consider the full shift $\mathcal{A}^\mathbb{N}$ and a subshift $\mathbb{K}$ of zero entropy.
 Consider a cylinder $J$ such that $J\cap \mathbb{K}=\emptyset$. Assume there exists $\beta_0>0$ such that for all  $x\in J$,  
$\disp \mathcal{L}_{0,\beta_{0},V_0}(\BBone_J)(x)<1$ holds. 
Then for every $\be\ge\be_{0}$, every equilibrium measure for the potential $V_0$ gives zero measure to $J$. 

Morever if the subshift is minimal and uniquely ergodic, then $\beta_0$ is independent of $J$, then $\mu_{\mathbb{K}}$ is the unique equilibrium state and thus $P(\beta)=0$ for $\beta\geq \beta_0$.

\end{mainth}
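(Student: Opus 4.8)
The plan is to treat the two halves of the statement separately, reducing the measure-theoretic conclusion to a statement about the \emph{induced} dynamics on $J$. First, a baseline lower bound: since $\varphi_0$ vanishes on $\K$ we have $V_0\equiv 0$ on $\K$, and $\mu_{\mathbb{K}}$ has zero entropy, so $h_{\mu_{\mathbb{K}}}+\beta\int V_0\,d\mu_{\mathbb{K}}=0$ and hence $P(\beta)\ge 0$ for every $\beta$. Next, two preliminary reductions on the hypothesis. Because $V_0\le 0$ everywhere, each weight $e^{\beta V_0}$ is nonincreasing in $\beta$, so $\mathcal{L}_{0,\beta,V_0}(\BBone_J)$ is nonincreasing in $\beta$; thus the bound assumed at $\beta_0$ persists for all $\beta\ge\beta_0$. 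Moreover $J$ is a cylinder, hence clopen and compact, and $x\mapsto \mathcal{L}_{0,\beta,V_0}(\BBone_J)(x)$ is continuous, so the pointwise inequality upgrades to a uniform bound $\rho:=\sup_{x\in J}\mathcal{L}_{0,\beta,V_0}(\BBone_J)(x)<1$.

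The core step is to read $\mathcal{L}_{0,\beta,V_0}(\BBone_J)$, via its definition in \eqref{eq-operator}, as the first-return (induced) Ruelle operator on $J$, whose sub-invariance $\mathcal{L}_{0,\beta,V_0}(\BBone_J)\le \rho\,\BBone_J$ controls the induced pressure. Let $\mu$ be an equilibrium state at $\beta\ge\beta_0$; by the ergodic decomposition it suffices to treat ergodic $\mu$, since for an equilibrium state almost every ergodic component is itself an equilibrium state. Suppose, for contradiction, $\mu$ is ergodic with $\mu(J)=c>0$. By Poincaré recurrence the first-return map $T_J$ is defined $\mu$-a.e.\ on $J$; let $\tau$ be the return time, $\hat\mu=\tfrac1c\,\mu|_J$ the normalized induced measure, and $S_\tau V_0(x)=\sum_{k=0}^{\tau(x)-1}V_0(\sigma^k x)$. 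Abramov's entropy formula together with the Kac identity $\int_X V_0\,d\mu=\mu(J)\int_J S_\tau V_0\,d\hat\mu$ give
$$h_\mu(\sigma)+\beta\int V_0\,d\mu = c\Big(h_{\hat\mu}(T_J)+\beta\int_J S_\tau V_0\,d\hat\mu\Big).$$
The induced system $(J,T_J)$ is a countable Markov shift indexed by excursions, and the sub-invariance $\mathcal{L}_{0,\beta,V_0}(\BBone_J)\le\rho\,\BBone_J$ yields $\mathcal{L}_{0,\beta,V_0}^{\,n}(\BBone_J)\le \rho^n\,\BBone_J$, so its Gurevich pressure satisfies $P_{T_J}(\beta S_\tau V_0)\le \log\rho<0$. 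The variational principle on the induced system then bounds the bracket above by $\log\rho<0$, whence $h_\mu+\beta\int V_0\,d\mu\le c\log\rho<0\le P(\beta)$, contradicting that $\mu$ is an equilibrium state. Therefore every equilibrium measure gives zero mass to $J$.

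For the second half, assume $\K$ minimal and uniquely ergodic. The claim is that a single $\beta_0$ works for all cylinders disjoint from $\K$; I would deduce this from the uniform recurrence of minimal subshifts (bounded gaps force a uniform control of the excursion structure, so that the induced-operator bound holds with a constant independent of the particular cylinder). Granting this, the open set $\K^c$ is a countable union of cylinders each disjoint from $\K$, so by the first half any equilibrium measure $\mu$ satisfies $\mu(\K^c)=0$, i.e.\ $\mu$ is supported on $\K$; unique ergodicity of $\K$ forces $\mu=\mu_{\mathbb{K}}$. Since the full shift is expansive, $\mu\mapsto h_\mu$ is upper semicontinuous and $\mu\mapsto\int V_0\,d\mu$ is continuous ($V_0$ being bounded and continuous), so the supremum defining $P(\beta)$ is attained; as every maximizer equals $\mu_{\mathbb{K}}$, we conclude $\mu_{\mathbb{K}}$ is the unique equilibrium state and $P(\beta)=h_{\mu_{\mathbb{K}}}+\beta\int V_0\,d\mu_{\mathbb{K}}=0$.

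The main obstacle I anticipate is the identification used in the core step: that $\mathcal{L}_{0,\beta_0,V_0}(\BBone_J)$ is genuinely the first-return operator whose iterates count \emph{all} excursions from $J$, together with the convergence of that sum. This is precisely where the zero entropy of $\K$ must enter—the number of admissible words hugging $\K$ must be small enough that long excursions do not overwhelm the $<1$ bound—and is exactly the accounting that, according to the introduction, was mishandled in the earlier proofs. The secondary difficulty is establishing that $\beta_0$ can be chosen uniformly in $J$ under minimality; this is the point that the concrete combinatorial analysis of $\mathrm{TM}$ in Theorem~\ref{thm-tm} is designed to settle quantitatively.
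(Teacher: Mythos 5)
Your first half is correct, and it reaches the paper's conclusion by a genuinely different and noticeably shorter route. The paper's appendix goes through spectral theory: it introduces the whole family $\mathcal{L}_{z,\beta,V}$, proves quasi-compactness on H\"older functions via Ionescu-Tulcea--Marinescu (Theorem \ref{ionescu}), identifies the spectral radius $\lambda_z=\mathcal{L}_z(\BBone_J)$ as an eigenvalue, builds the induced Gibbs/equilibrium state $m_z$ with pressure $\log\lambda_z$ (Lemma \ref{lem-pression-retour}), and gets the contradiction by comparing the zero-pressure induced measure of Lemma \ref{lem-zero} with $m_P$, using that $z\mapsto\log\lambda_z$ is decreasing, that $P(\beta)\ge 0$, and that the hypothesis says $\lambda_0<1$. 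You bypass all of this: from the sub-invariance $\mathcal{L}_{0,\beta,V_0}(\BBone_J)\le\rho\,\BBone_J$ you bound the induced pressure directly at $z=0$, and Abramov--Kac turns this into $h_\mu+\beta\int V_0\,d\mu\le c\log\rho<0\le P(\beta)$ for an ergodic component with $\mu(J)=c>0$ --- no parameter $z$, no $z_c$, no construction of $m_z$. Your preliminary steps (monotonicity in $\beta$ since $V_0\le 0$; $P(\beta)\ge0$ via $\mu_{\mathbb K}$, which is where the zero-entropy hypothesis enters; reduction to ergodic components) are all correct and genuinely needed, and some are only implicit in the paper; note also that the uniform bound $\rho<1$ is automatic here because $S_n\varphi_0(y)$ depends only on the return word, so $\mathcal{L}_{0,\beta,V_0}(\BBone_J)$ is constant on $J$, as the paper remarks. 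The price of your route is the citation of Gurevich pressure and Sarig's variational principle for countable Markov shifts; this is legitimate (the induced system is a full shift on return words, the induced potential is constant on $1$-cylinders hence of summable variations with finite supremum, $h_{\hat\mu}<\infty$ by Abramov, and $\int S_\tau V_0\,d\hat\mu>-\infty$ since $\int\tau\,d\hat\mu=1/c$ by Kac), but you should either verify these hypotheses or observe that only the easy inequality is needed and it follows elementarily from the Gibbs inequality $\sum_a p_a\bigl(\log(1/p_a)+\Phi(a)\bigr)\le\log\sum_a e^{\Phi(a)}\le\log\rho$ applied to the generating partition of $J$ into return-word cylinders. What the paper's heavier machinery buys is the existence, uniqueness and Gibbs property of the induced equilibrium states and the function $z\mapsto\lambda_z$; for this theorem alone, your argument suffices.

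The second half is where you have a real gap, which you do flag honestly, but your proposed mechanism is the wrong one. Minimality and bounded gaps control returns to cylinders \emph{meeting} $\mathbb K$; they give no estimate on the excursion weights from cylinders \emph{disjoint} from $\mathbb K$, and no $J$-independent bound on the induced operator follows from uniform recurrence. Note the paper's appendix does not prove this uniformity either: it simply invokes ``the hypothesis of uniformity on $J$ of $\beta_0$.'' The actual mechanism lives in the main text and is what you should substitute: any cylinder $J$ disjoint from $\mathbb K$ is contained (using $\sigma$-invariance of $\mu$, up to a shift) in a cylinder $J'$ determined by a \emph{minimal forbidden word}, so $\mu(J)\le\mu(J')$ (Lemma \ref{lem-indpdt-cylindre}); hence the operator bound only needs to be checked uniformly over minimal forbidden words, and that uniform verification is exactly the TM-specific content of Proposition \ref{prop:final} ($\lambda(\beta)<1/2$ independently of $w_J$). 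With that replacement, the rest of your second half --- $\mu(\mathbb K^c)=0$, hence $\mu=\mu_{\mathbb K}$ by unique ergodicity, attainment of the supremum via expansiveness and upper semicontinuity of entropy, and $P(\beta)=0$ --- is correct, and in fact slightly more careful than the paper about why equilibrium states exist at all.
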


\section{More definitions and tools}\label{sec-substi}

\subsection{Words, languages and special words}

For this paragraph we refer to \cite{Pyth.02}.

\begin{definition}
A word $v=v_0\dots v_{r-1}$ is said to occur at position $m$ in an infinite word $u$ if for all $i\in[0;r-1]$ we have $u_{m+i}=v_i$. We say that the word $v$ is a \emph{factor} of $u$. 

For an infinite word $u$, the language of $u$ (respectively the language of length $n$ of $u$)  is the set of all words (respectively all words of length $n$) in $\mathcal{A}^*$ which occur in $u$. We denote it by $L(u)$ (respectively $L_{n}(u)$).

\end{definition}

\begin{definition}\label{def-rec-lin} 
An infinite word $u$ is said to be recurrent if every factor of $u$ occurs infinitely often in $u$. 

\end{definition}

Remark that $u$ is recurrent is equivalent to the fact that $\sigma$ is onto on the adherence, denoted $\K_u$, of the orbit of $u$. Moreover we have equivalence between $\omega\in \mathbb K_u$ and $L(\omega)\subseteq L(u)$. Thus the language of the adherence of the orbit of $u$ is equal to the language of $u$. 
Remark that it is the case for the Thue Morse substitution, where $\mathrm{TM}=\mathrm{TM}_u$ if $u$ is a fixed point, see \cite{Pyth.02}.

For a substitution $H$, the language of the substitution $H$ is the set of words which are factors of some $H^n(a), a\in\mathcal{A}, n\in\mathbb N$. It will be denoted by $L_{H}$. Except in very special case, it is equal to the set of words which are factors of elements of $\K_H$.

A language is said to be factorial if for every word in the language all its factors are also inside the language. The language is also said to be extendable if every word in the language has a left (and right) extension which is also in the language. First we recall well-known definitions concerning combinatorics of words \cite{Cass.94}. 
\begin{definition}\label{def-bispe}
Let $L=(L_n)_{n\in\mathbb{N}}$ be a factorial and extendable language. 

 For $v \in L_n$ let us define the two quantities
$$
m_{l}(v)= card\{a\in \mathcal{A}, av\in L_{n+1}\},  m_{r}(v)= card\{b\in \mathcal{A},vb\in L_{n+1}\},$$
and the two others
$$m_{b}(v)= card\{(a,b)\in \mathcal{A}^2, avb\in L_{n+2}\},
i(v) = m_b(v)-m_r(v)-m_l(v)+1.$$

A word $v$ is called right special if $m_{r}(v)\geq 2$. It is called left special if $m_{l}(v)\geq 2$. A word $v$ is called bispecial if it is right and left special. 

Finally for a bispecial word $v$, a word $avb$ which is not in the language, and such that $av, vb$ are in the language is called a forbidden extension of $v$.

\end{definition}

\begin{definition}
A  word $v$ such that $i(v)< 0$ is called a weak bispecial. 
A word $v$ such that $i(v)>0$ is called a strong bispecial.
A bispecial word $v$ such that $i(v)=0$ is called a neutral bispecial.
\end{definition}
For a two-letter alphabet, a bispecial word can only fulfill $i(v)=-1$ (weak bispecial), $i(v)=0$ (neutral bispecial) or $i(v)=1$ (strong bispecial).

The complexity function of a language is a map $p:\mathbb N\rightarrow \mathbb N$ such that 
$$p(n)=card (L_n).$$

\begin{definition}
Let $H$ be a substitution. 
We say that the word $u\in L_H$ is {\bf uniquely desubstituable} if there exists only one triple $(s,v,p)$ such that $u=sH(v)p$ 
where 
\begin{enumerate}
\item $p$ is a proper prefix of $H(\wh p)$ and $\wh p\in \mathcal A$,
\item $s$ is a proper suffix of $H(\wh s)$ and $\wh s\in \mathcal A$,
\item $\wh s v\wh p$ is a word in $L_{H}$.
\end{enumerate}
 \end{definition}
 
 \subsection{Background on Thue-Morse substitution}
We will need the following result, see \cite{Pyth.02} or \cite{Durand-Perrin-22} for example. Since we are on a two-letter alphabet, if $a$ is a letter we denote $\overline{a}$ the other letter of the alphabet.

 \begin{theorem}
The subshift associated to the Thue-Morse substitution $\theta$ is uniquely ergodic, minimal and thus recurrent. 

The complexity function of the language of the Thue-Morse substitution fulfills $p(n)\leq 4n$ for $n\geq 1$. 
\end{theorem}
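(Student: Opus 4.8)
The plan is to prove the two assertions by independent routes: the dynamical properties come from primitivity of $\theta$, while the complexity bound comes from counting right-special factors.

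For minimality, unique ergodicity and recurrence, I would first check that $\theta$ is primitive: since $\theta^2(0)=0110$ and $\theta^2(1)=1001$, both letters occur in $\theta^2(a)$ for each letter $a$, whence there is an $N$ such that every letter occurs in $\theta^N(a)$ for all $a$. From this I get bounded-gap (syndetic) recurrence of every factor of a fixed point $u$: such a factor $w$ occurs in some $\theta^k(a)$, and by primitivity $\theta^k(a)$ occurs in $\theta^{k+N}(b)$ for every letter $b$; since $u$ is a concatenation of the blocks $\theta^{k+N}(b)$, the factor $w$ reappears within a window whose length is bounded independently of the position. Syndetic recurrence of all factors gives minimality of $\mathrm{TM}$ (every orbit is dense) and recurrence of every point. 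For unique ergodicity I would invoke the classical fact that primitivity of the incidence matrix $\left(\begin{smallmatrix}1&1\\1&1\end{smallmatrix}\right)$, whose Perron eigenvalue is $2$, forces the frequency of every factor to exist uniformly, which is equivalent to unique ergodicity; rather than reprove the general primitive-substitution theorem I would cite \cite{Queffelec,Pyth.02}.

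For the bound $p(n)\le 4n$ I would pass to special factors. On the two-letter alphabet every factor $v$ has $m_r(v)\in\{1,2\}$, and $v$ is right special exactly when $m_r(v)=2$, so the first-difference formula reads
$$p(n+1)-p(n)=\sum_{v\in L_n}\bigl(m_r(v)-1\bigr)=\mathrm{card}\{v\in L_n:\ v\text{ is right special}\}.$$
Hence it suffices to show that the number of right-special factors of each length $n\ge 1$ is at most $4$: starting from $p(1)=2$ and summing gives $p(n)\le 2+4(n-1)\le 4n$.

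The core of the argument is then to bound the number of right-special factors uniformly, and this is where I expect the main difficulty to lie. Using recognizability (unique desubstitution) of $\theta$, I would show that a right-special factor $w$ of large length desubstitutes in an essentially unique way as $w=s\,\theta(w')\,t$ with $s,t$ short boundary blocks and $w'$ again right special of length comparable to $|w|/2$. This renormalization maps right-special factors of length $n$ to right-special factors of length about $n/2$; since only finitely many boundary corrections $s$ can occur, the count of right-special factors is bounded independently of $n$, and a direct inspection of the short factors fixes the bound at $4$. The delicate point — more than the dynamical properties or the first-difference reduction — is the precise combinatorial analysis of how the right, left and bispecial extensions behave under $\theta$, i.e.\ verifying that desubstitution preserves right-specialness up to the bounded boundary corrections and that the resulting constant is exactly $4$; this requires the recognizability of $\theta$ together with careful bookkeeping of the one-sided extensions.
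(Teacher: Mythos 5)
The paper offers no proof of this theorem: it is imported from the literature (see \cite{Pyth.02}, \cite{Durand-Perrin-22}), so your attempt is necessarily a different route, and most of it is sound. The dynamical half is the standard argument: primitivity of $\theta$ (indeed already $\theta(0)=01$ contains both letters, so you do not even need to pass to $\theta^2$) gives syndetic occurrence of every factor of the fixed point, hence minimality and recurrence, and for unique ergodicity you cite the primitive-substitution theorem exactly as the paper implicitly does. The reduction of the complexity bound is also correct: on a binary, factorial, extendable language one has $p(n+1)-p(n)=\mathrm{card}\{v\in L_n: v \text{ right special}\}$, so a uniform bound of $4$ on the number of right-special factors of each length yields $p(n)\le 2+4(n-1)\le 4n$.

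The gap is in the one substantive step, and the inference you sketch does not deliver it. Knowing that a long right-special $w$ desubstitutes as $w=s\,\theta(w')\,t$ with $w'$ right special and boundedly many boundary words $(s,t)$ bounds the count at length $n$ by (number of boundary choices) times (count at length $\approx n/2$); iterating this multiplies constants at every renormalization level, so a priori you get a bound growing like a power of $\log n$, not a uniform constant, and certainly not the constant $4$. To close the argument you must actually do the bookkeeping you postponed: (i) show the renormalization is well defined, i.e.\ that right-specialness of $w$ really passes to $w'$ — this can fail naively when $t$ is a nonempty prefix of a $\theta$-image, since the two right extensions of $w$ need not come from two extensions of $w'$, so one must first prove $t$ is forced to be empty (or handle it); and (ii) use recognizability (cf.\ Lemma \ref{lem-cass}: words of length $\ge 4$ are uniquely desubstitutable) to show the correspondence $w\mapsto w'$ is, at each fixed length, essentially a bijection with the boundary word determined, so that the count at length $n$ is at most the count at the shorter length; only then does inspection of short lengths pin the constant at $4$. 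Alternatively, Brlek's and de Luca--Varricchio's exact computation of the Thue--Morse complexity (first differences always in $\{2,4\}$) closes the statement immediately and matches the paper's own reliance on references. Note that the constant is not cosmetic here: the paper uses $p(n)\le 4n$ quantitatively in Lemmas \ref{lem-calcul1} and \ref{lem-calcul2} to obtain $\beta_0<16.6$, so an argument yielding only ``some linear bound'' would still give the freezing transition but with a worse explicit threshold.
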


 \begin{lemma}\label{lem-cass}
The Thue-Morse substitution and its language $L_{\theta}$ fulfill:
\begin{itemize}
\item The fixed point which begins with $0$ can be written $$u=0110100110010110100101\dots$$

\item The non-uniquely desubstituable words of $L_{TM}$ are $0,1,01,10,010,101$.
\item Every word of length at least $4$ in $L_{TM}$ is uniquely desubstituable.
\item The neutral bispecial factors are $0$ and $1$. The weak bispecial factors are $\theta^i(010)$ and $\theta^i(101)$ for some $i\geq 0$.
\item The forbidden extensions of a bispecial word are (up to exchange $0-1$) $000$ and words like $a\theta^k(010)b$ with $\begin{cases}a=b\in \mathcal{A}, \quad k=2l\\ a=\overline{b}\quad k=2l+1\end{cases}$.

\end{itemize}
\end{lemma}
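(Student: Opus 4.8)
My plan is to treat the five items in order, the first being a direct computation and the last two both flowing from a single renormalization mechanism once the desubstitution statements are in place. For the first item I would iterate $\theta$ on the letter $0$, which gives $0\mapsto 01\mapsto 0110\mapsto 01101001\mapsto\cdots$ and converges to the displayed fixed point. The engine behind the second and third items is the synchronization (recognizability) of $\theta$, which I would establish by hand rather than quote Mossé's theorem. The key inputs are that $\theta$ is overlap-free (Thue), so that $000,111,01010,10101\notin L_{TM}$, and that both images $\theta(0)=01$ and $\theta(1)=10$ have length two. The latter forces $|s|,|p|\le 1$ in any desubstitution $u=s\,\theta(v)\,p$, so a desubstitution is nothing but a choice of parity for the block boundaries. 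Since no image block equals $00$ or $11$, every occurrence of $00$ or $11$ in a factor must straddle a boundary and hence pins the parity down; a factor containing at least one such square therefore admits a unique admissible parity. A factor containing neither square is alternating, and overlap-freeness limits alternating factors to $0,1,01,10,010,101,0101,1010$. For the two length-four words $0101,1010$ I would check that the ``wrong'' parity is killed by condition (3) because it forces $\widehat{s}v\widehat{p}\in\{000,111\}$, leaving a single desubstitution, while for the six words of length at most three both parities survive. This yields items two and three.

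For the fourth item I would run the standard descent for special factors through $\theta$. Using the unique desubstitution of item three, I would show that a bispecial factor $w$ with $|w|\ge 4$ must start and end exactly at block boundaries (otherwise one of its one-sided extensions could not branch), so that $w=\theta(v)$, and that the left and right extensions of $w$ are in bijection with those of $v$, whence $v$ is again bispecial; conversely $\theta$ sends bispecial factors to bispecial factors. A finite verification at lengths at most three provides the base layer: $0,1$ are neutral, $010,101$ are weak, while $\varepsilon,01,10$ are strong (these last are recorded but never used). Tracking the counts $m_\ell,m_r,m_b$ across the bijection shows that the sign of $i$ is preserved along the descent, so the weak bispecials are precisely the $\theta$-orbit $\theta^i(010),\theta^i(101)$ and the only neutral ones are the length-one words $0,1$.

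The fifth item is then a corollary of the same descent. A strong bispecial has $m_b=4$ and hence no forbidden extension, so only the neutral and weak words contribute. For $0$ and $1$ one has $m_b=3$, and overlap-freeness identifies the single forbidden extension as $000$, respectively $111$. For the weak family I would argue by induction on $i$: the base word $010$ has the two forbidden extensions $00100$ and $10101$, both of the form $a\,010\,a$; and if $a\,v\,b$ is forbidden for $v=\theta^k(010)$, then, since $\theta(a)$ ends in $\overline{a}$ and $\theta(b)$ begins with $b$, the word $\overline{a}\,\theta(v)\,b$ is a forbidden extension of $\theta(v)=\theta^{k+1}(010)$, its forbiddenness again following from the unique desubstitution of its image $\theta(a\,v\,b)$. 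This passage flips the relation between the two border letters and increments $k$, producing exactly the stated alternation $a=b,\ k=2l$ versus $a=\overline{b},\ k=2l+1$; and since each weak word has $m_b=2$, the two extensions so produced are all of them.

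The main obstacle is the bispecial descent above: proving that a long bispecial factor is forced to align with the block decomposition of $\theta$ and that the induced correspondence of one-sided extensions is a genuine bijection preserving the index $i$. Everything else — the fixed point, the parity bookkeeping in the synchronization step, and the forbidden-extension induction — is routine once this renormalization step is secured.
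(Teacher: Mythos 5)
Your proposal is correct, but it cannot be compared line by line with the paper's proof, because the paper gives none: Lemma~\ref{lem-cass} is stated as background, with the surrounding text deferring to the standard literature (\cite{Pyth.02}, \cite{Durand-Perrin-22}, and Cassaigne's bispecial framework \cite{Cass.94}). What you do differently is to supply the missing self-contained argument, and your route is the right classical one: hand-made recognizability for a uniform length-two, overlap-free substitution (occurrences of $00$/$11$ pin the block parity since no image block is a square; alternating factors stop at length $4$ because $01010$ and $10101$ are overlaps; and condition~(3) of the definition kills the spurious parity of $0101$ and $1010$ because $\widehat{s}$ and $\widehat{p}$ are then forced and produce $111$ or $000\notin L_{TM}$), followed by the renormalization descent: a bispecial $w$ with $|w|\ge 4$ is uniquely desubstituable, so all its occurrences carry the same parity, whence $s=p=\varepsilon$ (a nonempty $s$ or $p$ would force the preceding or following letter and contradict speciality), $w=\theta(v)$, and the bijection $awb\in L_{TM}\Leftrightarrow \overline{a}\,v\,b\in L_{TM}$ preserves $m_l,m_r,m_b$ and hence $i$; the seeds of length at most $3$ are exactly $\varepsilon,01,10$ (strong), $0,1$ (neutral), $010,101$ (weak), which yields item four, and the flip $a=b\leftrightarrow a=\overline{b}$ in item five is exactly the effect of $\theta(a)$ ending in $\overline{a}$ while $\theta(b)$ begins with $b$. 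Two spots deserve a cleaner statement than your sketch gives them, though neither is a genuine gap: in the inductive step of item five the desubstitution must be applied to a hypothetical occurrence of $\overline{a}\,\theta(v)\,b$ (whose middle block, of length at least $6$, is aligned), yielding $avb\in L_{TM}$ and the contradiction --- not to ``the image $\theta(avb)$'', which as phrased begs the question; and the base check $00100\notin L_{TM}$ is not an instance of overlap-freeness ($00100$ is not an overlap) but follows, for instance, from the parity clash of its two squares $00$ at odd distance, which leaves it with no block decomposition at all. With those verifications written out, your argument is complete; what it buys over the paper's approach is self-containment, at the cost of the length the paper saves by citation.
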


\begin{definition}\label{def-generation}
A bispecial word of $L_{TM}$ is said to be of generation zero if it belongs to $\mathcal E=\{0, 1, 010, 101\}$.
A bispecial word is said to be of generation $i\geq 1$ if it is equal to $\theta^i(010)$ or $\theta^i(101)$. 
\end{definition}

\subsection{Accidents}\label{sec:acc}
Consider a substitution $H$ and let $\mathbb K_H$ be the subshift associated to it.
Let $x$ be an element of $\mathcal{A}^\mathbb N$ which does not belong to $\mathbb K_H$. The word $w$ is the maximal prefix of $x$ such that $w$ belongs to the language of $\K_H$. 
Thus we obtain $d(x,\K_H)=2^{-d}$ with $x=w\dots$ and $w=x_0\dots x_{d-1}$. Let us denote $\delta(x)=d$, {\it i.e} $\delta(x)$ is the length of the longest prefix of $x$ in $L_H$. 

Remark that, for the substitution $\theta$, the word $w$ is non-empty since every letter is in the language of $\mathbb K$. Then, $w$ is the unique word such that 
$$x=wx', w\in L_{H}, wx'_{0}\notin L_{\theta}.$$

For a fixed $x\notin \mathbb K_H$, the accident times are ordered which allows to define the notion of $j^{th}$ accident with $j\ge 1$. This is done more formally in Definition  \ref{def:long-bisp-acc}.

\begin{definition}\label{def:long-bisp-acc}
We define
\begin{align*}
b_1&=b=\min\{j\geq 1, d(\sigma^jx,\mathbb K)\leq d(\sigma^{j-1}x,\K_H)\}\\
b_2&=\min\{j\geq 1, d(\sigma^{j+b_1}x,\mathbb K_H)\leq d(\sigma^{j+b_1-1}x,\K_H)\}\\
b_3&=\min\{j\geq 1, d(\sigma^{j+b_1+b_2}x,\mathbb K_H)\leq d(\sigma^{j+b_1+b_2-1}x,\K_H)\}\\
\dots
\end{align*}
Set $B_0=0, B_j=b_1+\dots +b_j$.
Then, the integer $B_j, j\geq 0$ is the {\bf $j^{th}$ accident time for $x$} and $d_{j}:=\delta(\sigma^{B_{j}}x)$ is its depth.  
The word $x_{B_j}\dots x_{d_{j-1}-1}$ is called  the {\bf $j^{th}$ accident-word for $x$}. Its length is called the {\bf length of the $j^{th}$ accident for $x$}.
\end{definition}

\begin{remark}
\label{rem-accident0}
By convention, the $0^{th}$ accident is at time zero. 
$\blacksquare$\end{remark}

Figure \ref{fig-accidents} illustrates the next lemma which appears in \cite{Bruin-Leplaid-13}. 
\begin{lemma}\label{lem:accident-bispecial}
Assume $\mathcal{A}$ is of cardinal $2$, and consider a substitution $H$ on this alphabet. Let $x$ be an infinite word not in $\mathbb K_H$.
Assume that $\delta(x)=d$ and that the first accident appears at time $0<b\leq d$, then the word $x_b\dots x_{d-1}$ is a non strong bispecial word of $L_{H}$. It is called the first accident-word. 
\end{lemma}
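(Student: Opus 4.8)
The plan is to convert the metric definition of the first accident into two sharp statements about the function $\delta$, and then to read the special structure of the word $v:=x_b\cdots x_{d-1}$ (of length $d-b$) directly off these. Set $\delta_j:=\delta(\sigma^j x)$, so that $d(\sigma^j x,\K_H)=2^{-\delta_j}$ and the defining condition of an accident at $j$, namely $d(\sigma^j x,\K_H)\le d(\sigma^{j-1}x,\K_H)$, is just $\delta_j\ge\delta_{j-1}$. Two elementary remarks drive the argument. First, since $L_H$ is factorial, removing the initial letter shortens the longest admissible prefix by at most one, so $\delta_j\ge\delta_{j-1}-1$ for every $j$. Second, by minimality of $b=b_1$, no index $j$ with $1\le j\le b-1$ is an accident, i.e. $\delta_j<\delta_{j-1}$; together with the first remark this forces $\delta_j=\delta_{j-1}-1$, hence $\delta_j=d-j$ on that range. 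In particular $\delta_{b-1}=d-b+1$, while at the accident itself $\delta_b\ge\delta_{b-1}=d-b+1$.

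From these two facts I can extract the three admissibility relations that carry the whole proof. The equality $\delta_{b-1}=d-b+1$ means that $x_{b-1}v=x_{b-1}\cdots x_{d-1}$ is exactly the longest admissible prefix of $\sigma^{b-1}x$, so $x_{b-1}v\in L_H$ but $x_{b-1}vx_d\notin L_H$. The inequality $\delta_b\ge d-b+1=|vx_d|$ means that the admissible prefix of $\sigma^b x$ is at least this long, so $vx_d\in L_H$. Moreover $v$ is a suffix of the admissible word $w=x_0\cdots x_{d-1}$, hence $v\in L_H$. Thus $v$ has the left extension $x_{b-1}v$ and the right extension $vx_d$ in $L_H$, while the two-sided extension $x_{b-1}vx_d$ is forbidden; the degenerate case $b=d$, where $v$ is empty, is covered verbatim by the same computation.

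It remains to upgrade $v$ to a bispecial word and to check $i(v)\le 0$, using that $L_H$ is extendable and that $\CA$ has two letters. For right-specialness: $w\in L_H$ is right-extendable while $wx_d\notin L_H$ by definition of $\delta(x)=d$, so $w\ol{x_d}\in L_H$, and its suffix of length $|v|+1$ is $v\ol{x_d}\in L_H$; with $vx_d\in L_H$ this gives $m_r(v)=2$. For left-specialness: $vx_d\in L_H$ is left-extendable while $x_{b-1}vx_d\notin L_H$, so $\ol{x_{b-1}}vx_d\in L_H$, whose prefix $\ol{x_{b-1}}v$ lies in $L_H$; with $x_{b-1}v\in L_H$ this gives $m_l(v)=2$. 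Hence $v$ is bispecial. Finally, the forbidden pair $(x_{b-1},x_d)$ shows $m_b(v)\le 3$, so $i(v)=m_b(v)-m_r(v)-m_l(v)+1=m_b(v)-3\le 0$, i.e. $v$ is non-strong.

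The step I expect to require the most care is the bispeciality argument: the accident data alone furnish only one admissible left extension and one admissible right extension of $v$, and it is the interplay with the extendability of $L_H$ (available because the subshifts in play are recurrent) that supplies the missing second extensions on the two-letter alphabet. One should also verify that the reasoning is genuinely uniform in the boundary case $b=d$: there $v=\epsilon$ and the forbidden word $x_{d-1}x_d$ correctly certifies that the empty word is non-strong (equivalently $p(2)\le 3$), which is precisely why this case cannot occur for Thue--Morse, where $p(2)=4$.
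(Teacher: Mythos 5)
Your proof is correct and follows essentially the same route as the paper's: you get $vx_d\in L_H$ from the accident inequality $\delta_b\ge\delta_{b-1}=d-b+1$, produce the second right (resp.\ left) extension by extendability of $L_H$ applied to $wx_d\notin L_H$ (resp.\ to the forbidden word $x_{b-1}vx_d$), and conclude non-strongness from $m_b(v)\le 3$ on the two-letter alphabet. The only difference is that you spell out in full the left-specialness, the index count $i(v)\le 0$, and the degenerate case $b=d$, which the paper compresses into ``by definition of $b$'' and ``the same argument''; this is a faithful completion of the paper's sketch, not a different method.
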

\begin{proof}
By definition of accident, we have $\delta(\sigma^bx)>d-b$, thus the word $x_b\dots x_{d-1}x_d$ belongs to $L_{H}$. Moreover $x_0\dots x_{d-1}$ belongs to $L_H$ and $x_0\dots x_d$ does not. Thus $x_0\dots x_{d-1}$ has a right extension in $L_{H}$ which is different from $x_0\dots x_d$. We conclude that the word $x_b\dots x_{d-1}$ has two right extensions in $L_{H}$ (one which belongs to the language of $x$ and one in $L_H$). Moreover we can prove that this word has also two left extensions in $L_{H}$ by definition of $b$ as the time of first accident. 
Thus we conclude that $x_b\dots x_{d-1}$ is a bispecial word. The same argument shows that it is not a strong bispecial word.
\end{proof}

\begin{remark}
\label{rem-acci-left-rigthspe} On a two-letter alphabet, we have to be more careful: the word $x_0\dots x_{d-1}$ is not right special in the language of $\K_H$. Moreover, and again if $\CA$ has cardinality two, if $x=\s(z)$ and there is an accident at time 1 for $z$, then $x_0\dots x_{d-1}$ is not left-special.
$\blacksquare$\end{remark}

\begin{center}
\begin{figure}[htbp]
\begin{tikzpicture}
\draw[dashed](0,1)--(2,0);
\draw(0,0)--(10,0);
\draw (3,-1) node[above]{$w$};
\draw[<->](0,-1)--(6,-1);
\draw[dashed] (-2,1)--(0,0);
\draw (-1.5,1) node{$y$};
\draw[dashed] (6,0)--(8,-1);
\draw (0.8,1) node{$y'$};
\draw (2,-0.1)--(6,-.1);
\draw[dashed] (8,0)--(10,-1);
\draw (8.2,-1) node{$y$};
\draw (10.2,-1) node{$y'$};
\draw (10.5,0) node{$x$};
\draw (6,0) node[above]{$d_0$};
\draw (2,0) node[above]{$b_1$};
\draw (8,0) node[above]{$d_1$};
\draw (6,1)--(10,1);
\draw (7,1) node[above]{$x'$};
\end{tikzpicture}
\caption{Accidents-Dashed lines indicate infinite words in $\mathbb K$.}\label{fig-accidents}
\end{figure}
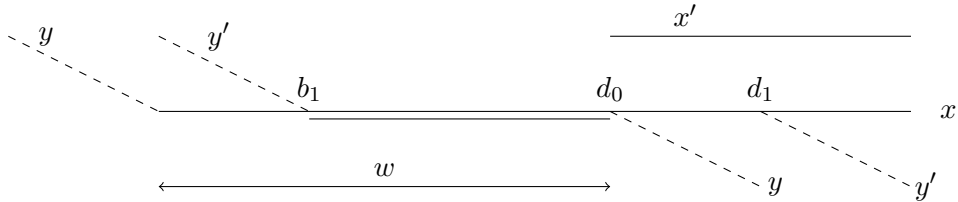
\end{center}

Remark that it could happen that the second accident appears before the first one has finished: It means the second accident-word overlaps with the first one.

\begin{lemma}\label{lem:deux-acc}
Consider $x$ such that $\delta(x)=d$. Denote by $B_i, B_{i+1}$ the times of two consecutive accidents. Assume the two bispecial words defined by the accidents do not overlap, then we have:
$$\delta(\sigma^ix)=\begin{cases}
d-i, 0\leq i< B_i\\
d_1-B_1-i, B_i\le i< B_{i+1}
\end{cases}$$
\end{lemma}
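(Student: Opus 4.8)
The plan is to reduce everything to a single elementary principle: \emph{between two consecutive accident times the quantity $\delta(\sigma^{\bullet}x)$ decreases by exactly one at each application of $\sigma$, and at an accident time it is reset to the corresponding depth $d_j$}. Once this ``descent by one'' principle is established, both lines of the displayed formula follow from two short inductions, and the two-case shape of the formula is simply the alternation \emph{descend, reset, descend}.

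First I would record the two inequalities that pin down $\delta(\sigma^j x)$ in terms of $\delta(\sigma^{j-1}x)$. On the one hand, since the language $L_H$ is factorial, dropping the first letter of the longest prefix of $\sigma^{j-1}x$ lying in $L_H$ still produces a factor in $L_H$; read after one shift, this is a prefix of $\sigma^j x$ of length $\delta(\sigma^{j-1}x)-1$ in $L_H$, giving the universal lower bound $\delta(\sigma^j x)\ge \delta(\sigma^{j-1}x)-1$. On the other hand, recalling $d(\cdot,\mathbb K)=2^{-\delta(\cdot)}$, the definition of accident times says that $j$ is \emph{not} an accident time exactly when $d(\sigma^j x,\mathbb K)>d(\sigma^{j-1}x,\mathbb K)$, i.e. $\delta(\sigma^j x)<\delta(\sigma^{j-1}x)$, hence $\delta(\sigma^j x)\le \delta(\sigma^{j-1}x)-1$. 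Combining the two bounds yields $\delta(\sigma^j x)=\delta(\sigma^{j-1}x)-1$ at every non-accident step, which is the descent principle.

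Next I would run the inductions. For $0\le i<B_1$ no accident has occurred at any of the times $1,\dots,i$, so the descent principle applies repeatedly from $\delta(x)=d$, giving $\delta(\sigma^i x)=d-i$; this is the first line. At $i=B_1$ the value is $\delta(\sigma^{B_1}x)=d_1$ by definition. Here I would invoke Lemma~\ref{lem:accident-bispecial} (applied at time $B_1$) together with the non-overlap hypothesis to guarantee that the second accident word $x_{B_2}\dots x_{d_1-1}$ is a genuinely new bispecial word, disjoint from the first, so that $d_1$ really governs a fresh descent phase rather than interacting with the previous one. Restarting the descent principle from height $d_1$ and iterating for $B_1\le i<B_2$ (no accident occurs strictly between $B_1$ and $B_2$ by minimality of $B_2$) yields the second line, namely the linear descent of slope $-1$ issued from $d_1$ at time $B_1$.

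The main obstacle, I expect, is not the arithmetic of the descent but the careful translation between the distance-based definition of accidents and the prefix-length function $\delta$, and above all the correct use of the non-overlap hypothesis. I must check that when the two bispecial accident words do not overlap, the second descent phase is truly independent of the first, so its starting height is exactly $d_1$ and no intermediate accident is missed; the overlapping case is precisely where this decoupling fails and the naive formula breaks, so isolating disjointness of the accident words as the condition that makes the two inductions separate is the delicate point. I would also dispatch the edge cases (consistency at $i=B_1$, and the fact that $\delta$ cannot drop to an impossible value since every letter lies in $L_H$) to close the argument.
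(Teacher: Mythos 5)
Your proof is correct and takes essentially the same route as the paper, whose entire proof is ``a simple application of the definition of accident'' together with Figure~\ref{fig-accidents}: your descent-by-one principle (lower bound $\delta(\sigma^j x)\ge\delta(\sigma^{j-1}x)-1$ from factoriality of $L_H$, upper bound from the definition of a non-accident step) followed by the two inductions is exactly that application written out, and you rightly read the second line of the display as $\delta(\sigma^i x)=d_1-(i-B_1)$, correcting the statement's typo. One small remark: your appeal to Lemma~\ref{lem:accident-bispecial} and to the non-overlap hypothesis is not actually needed for the $\delta$-formula, since the descent argument is purely local and the reset to $d_1$ at time $B_1$ holds by definition of the depth; the non-overlap assumption only cleans up the combinatorial picture of the accident words themselves, so treating it as the crux slightly overstates its role without harming the proof.
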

\begin{proof}
It is a simple application of the definition of accident. See also Figure \ref{fig-accidents} with $B_{1}=b$.
\end{proof}


\section{Main tool of the proof of Theorem \ref{thm-tm} }
\subsection{Scheme of the proof}
The Thermodynamic Formalism was introduced in dynamical systems by Sinai, Ruelle and Bowen (\cite{sinai,Ruelle.78,Bow.08}). The main tool for  a uniformly hyperbolic system $(X,\sigma)$ and a H\"older continuous potential $V$ is the transfer operator:
$$\CT(g)(x):=\sum_{y\in\s^{-1}(x)}e^{V(y)}g(y),$$
for $x\in X$ and $g$ a continuous function on $X$.

Hyperbolicity and H\"older continuity combine themselves to give nice spectral properties to this operator. The main point is that  the pressure for $V$ is the logarithm of the spectral radius for $\CT$ which has a single dominating eigenvalue. 

For systems with weaker hyperbolicity or potentials with weaker regularity, it may be harder to get the same spectral properties. A way to recover them is to consider an inducing scheme. Several methods exist in the literature. We shall use here the one summarized in \cite{lepl-survey}. 

This result will be stated in Theorem \ref{thm:meta-phase}, for which we refer to the appendix for a proof, since this result has no complete proof in the litterature.

We consider $J$ a cylinder outside $\mathbb K$ defined by a word $w_J\notin L_{\mathbb K}$. Consider the first return map $f$ to $J$ of $\sigma$, with return time{\color{orange}\footnote{not defined everywhere.}} $\tau(x)=\min\{n\geq 1, \sigma^n(x)\in J\}$.


Then we define, for each $\beta>0$ and $z\in\mathbb R$, an induced {transfer operator} by:

\begin{equation}\label{eq-operator}
\mathcal{L}_{z,\beta,V}(g)(x)=\displaystyle\sum_{n\in\mathbb{N}}\sum_{\substack{y\in J\\ \tau(y)=n\\ \sigma^n(y)=x}}e^{\beta(S_nV)(y)-nz}g(y),
\end{equation}
where $(S_nV)(y)=\displaystyle\sum_{k=0}^{n-1}V\circ \sigma^k(y)$ and $g$ is a continuous function from $J$ to $\R$.

\begin{remark}
The function $\CL_{Z,\beta,V}(g)$ is continuous if $g$ is continuous. Thus the operator is defined on the set of continuous functions from $J$ to  $\mathbb R$.
\end{remark}

We will compute  $\mathcal{L}_{0,\beta,V_0}(\BBone_J)(x)$ for the particular potential $V_0=-\varphi_0$, see Definition \ref{def-potentiel0}, and deduce the result for this potential. With Lemma \ref{fin} we will deduce the result for every potential in $\Xi$.

With the help of Theorem \ref{thm:meta-phase}, our strategy of proof is the following: the rest of the proof consists in showing that $\CL_{0,\be,V_0}(\BBone_J)(x)$
is strictly smaller than $1$ for $\beta$ large enough and independent of $J$, which will be done in Proposition \ref{prop:final} and the following ones.  

\subsection{Return words and minimal forbidden words}
Let us recall that $w_J$ is the word which defines the cylinder $J$. For $x\in J$ we have to compute, for $\beta$ large enough: 
 
\begin{equation}\label{eq-operator2}
\mathcal{L}_{0,\beta,V_0}(\BBone_J)(x)=\displaystyle\sum_{n\in\mathbb{N}}\sum_{\substack{y \in J\\ \tau(y)=n\\ \sigma^n(y)=x}}e^{-\beta(S_n\varphi_0)(y)}.
\end{equation}

Remark that this sum is infinite if $\beta$ is small enough, see \cite{Ishaq-Leplaideur-22}. All the computations will be made in $\overline{\mathbb R_+}$. It will allow us to replace 'diverges in the positive direction' by 'converges to infinity'. Note that such a point $y$ is of the form $y=ux$, where $u=u_0\dots u_{n-1}$ and $uw_J$ has $w_J$ for prefix.
   
   Now, due to the form of our potential, we claim that $S_{n}(\varphi)(y)$ does only depend on $u$, and thus $\mathcal{L}_{0,\beta,V_0}(\BBone_J)(x)$ is also independent of $x$ thus constant.

\begin{definition}
Let $w$ be a word, then we consider $R(w)=\{u\neq\varepsilon, uw\in w\mathcal A^*, uw\notin\mathcal{A}^{+}w\mathcal{A}^{+}
\}$. This is called the set of return words of $w$ (in $\mathcal A^{\mathbb N}$).
\end{definition}

A {\bf minimal forbidden word of $\mathbb K$} is a word $w$ which is not in $L_{\mathrm{TM}}$, and has minimal length in the sense that each of its proper factors is in $L_{\mathrm{TM}}$.

\begin{remark}
Lemma \ref{lem-indpdt-cylindre} will explain why we use only minimal forbidden words. 
\end{remark}

\begin{lemma}\label{lem-return-bisp}
The word $w$ is a minimal forbidden word for $L_\mathrm{TM}$ if and only if it is a forbidden bilateral extension of a bispecial word.
\end{lemma}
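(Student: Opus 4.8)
The plan is to prove the two implications directly from the definitions, using only that the language $L_{\mathrm{TM}}$ is factorial and extendable, which follows from minimality (hence recurrence) of $\mathrm{TM}$ established earlier.

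For the forward implication, I would start from a minimal forbidden word $w$ and first note that every word of length at most $2$ lies in $L_{\mathrm{TM}}$ (the four words $00,01,10,11$ all occur in the fixed point $u$), so $|w|\geq 3$; I then write $w=avb$ with $a,b\in\mathcal{A}$ and $v$ a nonempty word. Since $av$ and $vb$ are proper factors of $w$, minimality gives $av,vb,v\in L_{\mathrm{TM}}$. Next I would invoke extendability: $av\in L_{\mathrm{TM}}$ admits a right extension $avc\in L_{\mathrm{TM}}$, and since $avb=w\notin L_{\mathrm{TM}}$ we must have $c\neq b$; hence $vb$ and $vc$ are two distinct right extensions of $v$, so $m_{r}(v)\geq 2$. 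Symmetrically, extending $vb$ on the left yields $dvb\in L_{\mathrm{TM}}$ with $d\neq a$, giving two left extensions $dv,av$ of $v$, so $m_{l}(v)\geq 2$. Thus $v$ is bispecial, $av,vb\in L_{\mathrm{TM}}$ and $avb\notin L_{\mathrm{TM}}$, i.e.\ $w=avb$ is a forbidden bilateral extension of $v$.

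For the converse, I would suppose $w=avb$ is a forbidden extension of a bispecial word $v$, so $av,vb\in L_{\mathrm{TM}}$ while $w\notin L_{\mathrm{TM}}$, and then check that every proper factor of $w$ lies in $L_{\mathrm{TM}}$. The key observation is that a proper factor of $w$, being a contiguous block strictly shorter than $w$, either omits the initial letter $a$ --- in which case it is a factor of $vb$ --- or omits the final letter $b$ --- in which case it is a factor of $av$. Since $av,vb\in L_{\mathrm{TM}}$ and $L_{\mathrm{TM}}$ is factorial, all such factors belong to $L_{\mathrm{TM}}$; together with $w\notin L_{\mathrm{TM}}$ this shows $w$ is minimally forbidden.

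The argument is elementary, and I expect no serious obstacle; the only point requiring a little care is the extendability step in the forward direction, where one must genuinely use that $av$ (resp. $vb$) extends on the right (resp. left) inside $L_{\mathrm{TM}}$ to produce the second extension of $v$ that distinguishes it from the forbidden extension $avb$. This is precisely where minimality of the Thue--Morse subshift enters, whereas the factoriality used in the converse is immediate.
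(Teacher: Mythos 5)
Your proof is correct and takes essentially the same route as the paper: in the forward direction you use extendability of $L_{\mathrm{TM}}$ to right-extend $av$ (resp.\ left-extend $vb$) and obtain the second right (resp.\ left) extension of the central word $v$, exactly as the paper does with $w=a'ua$, after the same preliminary observation that all words of length two are in the language. Your converse simply spells out, via factoriality, the step the paper dismisses as ``clearly a minimal forbidden word,'' so there is nothing to fix.
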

\begin{proof}
Assume $w$ is a minimal forbidden word. Let us write $w=a'ua$ with $u$ a word of length at least one, since $\mathbb L_{\mathrm{TM}}$ contains all words of length two. Then $a'u$ is inside the language $L_{\mathrm{TM}}$ by definition, thus there exists a letter $b$, such that $a'ub$ is also inside the language. The letter $b$ is different from $a$, otherwise $w$ would be in $L_{\mathbb K}$. By symmetry, there exists $c\neq a'$ such that $cua$ is also in the language. Thus $u$ is right special and $u$ is left special, thus $u$ is a bispecial word of $L_\mathrm{TM}$.
Conversely, if $w$ is a forbidden extension, then it is a forbidden word. It is clearly a minimal forbidden word.

\end{proof}

\begin{proposition}\label{prop-operateur-prod-mot}
With the previous notations we obtain
$$\mathcal{L}_{0,\beta,V_0}(\BBone_J)(x)= \sum_{u\in R(w_J)}\prod_{k=0}^{|u|-1}(1+\frac{1}{\delta(\sigma^k(uw_J))})^{-\beta}.$$
\end{proposition}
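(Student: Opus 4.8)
The goal is to make the abstract operator value $\mathcal{L}_{0,\beta,V_0}(\BBone_J)(x)$ concrete by enumerating exactly which points $y$ contribute. The plan is to unwind the definition in Equation~\eqref{eq-operator2} and show that the contributing points are in bijection with return words of $w_J$, and that the Birkhoff sum $S_n\varphi_0$ along each such point depends only on the return word $u$ through the depths $\delta(\sigma^k(uw_J))$.

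First I would fix $x\in J$ and recall, as noted just before the statement, that every $y$ with $\sigma^n(y)=x$, $\tau(y)=n$ and $y\in J$ is of the form $y=ux$ where $u=u_0\dots u_{n-1}$ and $uw_J$ has $w_J$ as a prefix. The condition $\tau(y)=n$ (first return at time $n$) together with $y\in J$ says precisely that $\sigma^k(y)\notin J$ for $0<k<n$ and $\sigma^n(y)=x\in J$; translating into words, this means $uw_J$ contains $w_J$ as a prefix and as a suffix (via $x$), with no intermediate occurrence of $w_J$ starting strictly between positions $0$ and $n$. This is exactly the defining condition for $u$ to be a return word of $w_J$, i.e. $u\in R(w_J)$ as in the Definition above, with $|u|=n$. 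So the double sum over $n$ and over $y$ collapses to a single sum over $u\in R(w_J)$.

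Next I would compute the exponent $-\beta(S_n\varphi_0)(y)$ for $y=ux$ with $u\in R(w_J)$ and $n=|u|$. By definition of $\varphi_0$ (Definition~\ref{def-potentiel0}), $\varphi_0(z)=\log(1+\frac{1}{\delta(z)+1})$ only through the quantity $\delta(z)$, which is the length of the longest prefix of $z$ lying in $L_{\mathrm{TM}}$; here I must reconcile the indexing, writing $\varphi_0(z)=\log(1+\frac1{\delta(z)})$ for points with $d(z,\mathbb K)=2^{-\delta(z)}$ so the product below matches. The key observation is that for $0\le k\le |u|-1$, the value $\delta(\sigma^k(y))=\delta(\sigma^k(ux))$ is governed entirely by the prefix $u_k\dots u_{|u|-1}w_J\dots$; since all these iterates lie in the first-return excursion before reaching $J$, their longest-prefix-in-language is determined by $u$ and $w_J$ alone, not by the tail $x$. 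This is why I can replace $\delta(\sigma^k(ux))$ by $\delta(\sigma^k(uw_J))$: the longest common prefix with $\mathrm{TM}$ is shorter than the return excursion, so it sees only letters of $uw_J$. Hence
$$S_n\varphi_0(y)=\sum_{k=0}^{|u|-1}\log\Big(1+\frac{1}{\delta(\sigma^k(uw_J))}\Big),$$
and exponentiating with the factor $-\beta$ turns the sum into the product $\prod_{k=0}^{|u|-1}(1+\frac{1}{\delta(\sigma^k(uw_J))})^{-\beta}$.

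Summing these contributions over all $u\in R(w_J)$ yields the claimed formula, which as promised is independent of $x$. The main obstacle I expect is the careful verification that $\delta(\sigma^k(ux))=\delta(\sigma^k(uw_J))$ for every $k$ in the range, i.e. that the distance-to-$\mathbb K$ profile along the excursion is insensitive to the tail $x$. This requires checking that the longest prefix in $L_{\mathrm{TM}}$ at each intermediate point terminates within the word $uw_J$ rather than extending into $x$; intuitively this holds because $w_J\notin L_{\mathrm{TM}}$ forces a mismatch at a controlled position, but making this rigorous needs the return-word structure and the precise role of the minimal forbidden word defining $J$ (cf. Lemma~\ref{lem-return-bisp} and the forthcoming Lemma~\ref{lem-indpdt-cylindre}). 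Once this depth-localization is established, the bijection with $R(w_J)$ and the Birkhoff-sum computation are routine.
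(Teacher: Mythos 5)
Your proposal is correct and follows essentially the same route as the paper's proof: identify the contributing preimages $y=ux$ with return words $u\in R(w_J)$, turn the Birkhoff sum of $\varphi_0$ into the telescoping-ready product, and replace $\delta(\sigma^k(ux))$ by $\delta(\sigma^k(uw_J))$. In fact you justify the depth-localization identity $\delta(\sigma^k(ux))=\delta(\sigma^k(uw_J))$ (via factoriality of $L_{\mathrm{TM}}$ and $w_J\notin L_{\mathrm{TM}}$ capping the longest prefix before the tail $x$ can matter), a step the paper merely asserts, and you correctly flag the harmless off-by-one between Definition \ref{def-potentiel0} and the product formula.
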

\begin{proof}
Recall Equation \eqref{eq-operator}:
$$\mathcal{L}_{0,\beta,V_0}(\BBone_J)(x)=\displaystyle\sum_{n\in\mathbb{N}}\sum_{\substack{y\in J\\ \tau(y)=n\\ \sigma^n(y)=x}}e^{-\beta (S_n\varphi)(y)}.$$

Such an infinite word $y$ can be written as $y=ux$ where $u$ is a word of length $n$ which belongs to $R(w_J)$:

$$\mathcal{L}_{0,\beta,V_0}(\BBone_J)(x)=\displaystyle\sum_{n\in\mathbb{N}^*}\sum_{\substack{u\in R(w_J)\\ |u|=n}}e^{-\beta(S_n\varphi)(ux)}.$$

$$(S_n\varphi)(ux)=\displaystyle\sum_{k=0}^{n-1}\varphi\circ \sigma^k(y)=\displaystyle\sum_{k=0}^{n-1}\log(1+\frac{1}{\delta(\sigma^k(y))})=\log\prod_{k=0}^{n-1}(1+\frac{1}{\delta(\sigma^k(y))}),$$
$$e^{-\beta S_n(\varphi)(y)}=\prod_{k=0}^{n-1}(1+\frac{1}{\delta(\sigma^k(y))})^{-\beta}.$$

Moreover we have $\delta(\sigma^k(y))=\delta(\sigma^k(uw_J))$, thus we can conclude. 
\end{proof}

\subsection{Sum}
Assume that $w_J$ is a minimal forbidden word of $L_{\mathrm{TM}}$ which defines $J$.

In order to describe the sum of Proposition \ref{prop-operateur-prod-mot} we introduce, for a fixed word $u$ with exactly $M\geq 0$ accidents, the following notations: the accident words are denoted $v^1,\dots, v^M$ and the maximal words which are in the language are denoted $u^0,\dots, u^M$ where $u^i$ has $v^{i+1}$ for suffix, see Figure \ref{fig:suite-accidents}.

\begin{figure}
\begin{tikzpicture}[scale=1]
\draw (0,-.4)--(6,-.4);
\draw(6.2,-.4) node{$u$};
\draw (0,-.8)--(2,-.8);
\draw(2.2,-.8) node{$u^0$};
\draw (1.5,-1.2)--(2,-1.2);
\draw(2.2,-1.2) node{$v^1$};
\draw (1.5,-1.5)--(4,-1.5);
\draw(4.2,-1.5) node{$u^1$};
\draw (3.6,-1.9)--(4,-1.9);
\draw(4.2,-1.9) node{$v^2$};
\draw (3.6,-2.3)--(5,-2.3);
\draw(5.2,-2.3) node{$u^2$};
\end{tikzpicture}
\caption{Global picture of the accidents in a word $u$: the first bispecial word $v_1$, then the second accident with extension $a_1v_1b_1$, and so on.}\label{fig:suite-accidents}
\end{figure}
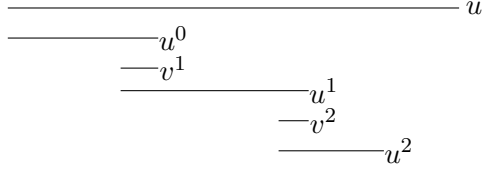

\begin{proposition}\label{prop-fract-bisp}
With previous notations, we have
$$\sum_{u\in R(w_J)}\prod_{k=0}^{|u|-1}(1+\frac{1}{\delta(\sigma^k(uw_J))})^{-\beta}=\sum_{M\geq 0}
\displaystyle\sum_{\substack{ u\in R(w_J)\\\text{M accidents}}} [\frac{(|u^0|+1)\dots (|u^{M-1}|+1)(|u^M|+1)}{(|v^1|+1)\dots (|v^M|+1)(|w_J|-1)}]^{-\beta}.$$
\end{proposition}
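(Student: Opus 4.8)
The plan is to compute the Birkhoff sum $S_{|u|}\varphi_0(uw_J)$ explicitly by tracking how the quantity $\delta(\sigma^k(uw_J))$ evolves as $k$ runs from $0$ to $|u|-1$, and to recognize that the resulting telescoping product is governed entirely by the accident structure of $u$. By Proposition \ref{prop-operateur-prod-mot} the left-hand side equals $\sum_{u\in R(w_J)}\prod_{k=0}^{|u|-1}(1+\frac{1}{\delta(\sigma^k(uw_J))})^{-\beta}$, so the content of the statement is the evaluation of this product. First I would fix $u\in R(w_J)$ with exactly $M$ accidents, write $y=uw_J\dots$ as the relevant infinite word, and use Lemma \ref{lem:deux-acc} to describe $\delta(\sigma^k y)$ piecewise: between two consecutive accident times the depth decreases by $1$ at each application of $\sigma$, and at an accident time it jumps according to the length of the accident word, exactly as in Figure \ref{fig-accidents}.

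Second, I would evaluate $\prod_k (1+\frac1{\delta(\sigma^k y)})$ on a single \emph{descending run} of the depth. If $\delta$ takes consecutive integer values $N, N-1, \dots, m+1$ over a block of length $N-m$, then
$$\prod_{j=m+1}^{N}\left(1+\frac1j\right)=\prod_{j=m+1}^{N}\frac{j+1}{j}=\frac{N+1}{m+1},$$
a clean telescoping. The whole orbit segment $0\le k<|u|$ decomposes into such descending runs, one for each maximal-in-language word $u^0,\dots,u^M$, punctuated by the accidents. The key bookkeeping step is to read off, from the geometry of the accidents, that the numerator of each telescoped factor is $|u^i|+1$ (the top of the run, i.e. the full depth $\delta$ at the start of the $i$-th maximal word) while the denominator is $|v^{i+1}|+1$ (the depth surviving at the accident, which equals one plus the length of the accident-word $v^{i+1}$, by Lemma \ref{lem:accident-bispecial} identifying the accident-word as the bispecial factor $x_b\dots x_{d-1}$). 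The final factor, coming from the last run that terminates when the orbit enters $J$ at $w_J$, contributes the denominator $|w_J|-1$; this is the boundary term recording that the return happens precisely at the forbidden word defining $J$, and its proper prefix of length $|w_J|-1$ is the last word still in $L_{\mathrm{TM}}$.

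Carrying out this product over all runs gives
$$\prod_{k=0}^{|u|-1}\left(1+\frac{1}{\delta(\sigma^k(uw_J))}\right)=\frac{(|u^0|+1)\cdots(|u^M|+1)}{(|v^1|+1)\cdots(|v^M|+1)(|w_J|-1)},$$
and raising to the power $-\beta$ and summing over $u$, regrouped according to the number $M$ of accidents, yields exactly the claimed right-hand side. The main obstacle I anticipate is the careful handling of the boundary and overlap cases: first, making the telescoping rigorous when the last accident is not yet finished when the orbit reaches $J$ (the overlapping case flagged in the remark after Lemma \ref{lem:accident-bispecial}), so that the indexing of $u^i$ and $v^i$ in Figure \ref{fig:suite-accidents} is consistent; and second, pinning down that the terminal denominator is $|w_J|-1$ rather than $|w_J|$ or $|w_J|+1$, which requires using that $w_J$ is a \emph{minimal} forbidden word (Lemma \ref{lem-return-bisp}), so that its length-$(|w_J|-1)$ prefix and suffix are genuinely the last in-language words controlling the depth at the return. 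Everything else is the telescoping identity above, applied run by run.
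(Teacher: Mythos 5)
Your proof is correct and takes essentially the same route as the paper: partition $R(w_J)$ by the number $M$ of accidents, telescope $\prod\left(1+\frac{1}{\delta}\right)$ over each descending run of the depth between consecutive accident times via $\frac{p+1}{p-d+1}$, and obtain the boundary denominator $|w_J|-1$ from the minimality of the forbidden word $w_J$ (the paper handles this endpoint by the index shift $\prod_{k=0}^{|u|-1}=\prod_{k=1}^{|u|}$, using that the $k=0$ and $k=|u|$ factors both correspond to depth $|w_J|-1$). The overlap issue you flag is harmless, since between accident times $\delta$ decreases by exactly one at every step, so the run-by-run telescoping goes through verbatim even when accident-words overlap.
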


\begin{proof}
We make a partition of the set of words $u$ with the number $M$ of accidents

$$\sum_{u\in R(w_J)}\prod_{k=0}^{|u|-1}(1+\frac{1}{\delta(\sigma^k(uw_J))})^{-\beta}=\displaystyle\sum_{M\geq 0}\sum_{\substack{u\in R(w_J)\\ \text{M accidents}}}\prod_{k=0}^{|u|-1}(1+\frac{1}{\delta(\sigma^k(uw_J))})^{-\beta}.$$

Between two accidents the product can be simplified :
If $\delta(uw_J)=p$ and the first accident-word is a prefix of $\sigma^d(uw_J)$, we obtain the following formula where $p-d$ is the length of the accident word.
$$(1+\frac{1}{p})(1+\frac{1}{p-1}) \dots (1+\frac{1}{p-d+1})=\frac{p+1}{p-d+1}.$$

We will deduce the result as follows:
we apply this formula with words $u^0,\dots, u^M$ and accident-words $v^1, \dots, v^{M+1}$. And we remark that the product $\prod_{k=0}^{n-1}$ is equal to the product $\prod_{k=1}^n$ where $u^M$ is the maximal prefix which ends with $w_J$ minus the last letter. 

\end{proof}

\begin{definition}\label{def-SM}
Let  
$$S_M(w_J)=\displaystyle\sum_{\substack{u\in R(w_J),\\ \text{M accidents}}} [\frac{(|u^0|+1)\dots (|u^M|+1)}{(|v^1|+1)\dots (|v^M|+1)(|w_J|-1)}]^{-\beta}.$$
\end{definition}

Note that $S_M(w_J)\in[0,+\infty]$. Remark that the previous proposition gives 
\begin{equation}\label{eq-l-sum}
\sum_{u\in R(w_J)}\prod_{k=0}^{|u|-1}(1+\frac{1}{\delta(\sigma^k(uw_J))})^{-\beta}\leq \displaystyle\sum_{M\geq 0}S_M(w_J).
\end{equation}

Now the goal is to prove

\begin{proposition}\label{prop:final}
If $\beta>4$, then $S_M(w_J)$ is finite for all $M\geq 0$. 
If $\beta>17$, then $\sum_{M} S_M(w_J)$ converges and $\sum_{M\geq 0} S_M(w_J)$ is less than $1$.
\end{proposition}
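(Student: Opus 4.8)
The quantity $S_M(w_J)$ is a sum over return words $u$ having exactly $M$ accidents, and the summand is a ratio depending only on the lengths $|u^0|,\dots,|u^M|$ and $|v^1|,\dots,|v^M|$. The plan is to reorganize the sum so that it factors (or nearly factors) over the individual accidents. Concretely, I would first understand the combinatorial constraints linking the accident lengths $|v^i|$ and the block lengths $|u^i|$: by Lemma~\ref{lem:accident-bispecial} each accident word is a non-strong bispecial word, so by Lemma~\ref{lem-cass} its length is essentially $|\theta^j(010)|=3\cdot 2^j$ for some generation $j\ge 0$ (together with the generation-zero words). This tells me that the accident words come in a sparse, geometrically-spaced sequence of admissible lengths, which is the crucial structural input: at each level the number of choices for an accident of a given length is bounded (at most a constant, coming from the bounded complexity $p(n)\le 4n$), and the admissible accident lengths themselves grow geometrically.

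The next step is to treat each $S_M(w_J)$ separately for finiteness. Fixing $M$, I would bound the sum over all configurations of $M$ accidents by a product of $M$ geometric-type series, one for each accident slot. The summand $\bigl[\prod(|u^i|+1)/\prod(|v^j|+1)\bigr]^{-\beta}$ should be dominated by a product of terms of the form $\bigl((|u^i|+1)/(|v^{i+1}|+1)\bigr)^{-\beta}$ together with the factor $(|w_J|-1)^{\beta}$. For fixed $M$, convergence of the sum over the sizes of the non-accident blocks $u^i$ reduces to convergence of $\sum_{m}(\#\{\text{blocks of length }m\})\,m^{-\beta}$, and since the number of admissible words of length $m$ grows at most linearly ($p(m)\le 4m$), this series converges for $\beta>2$; the condition $\beta>4$ gives comfortable room to absorb the extra combinatorial factors (choice of accident word at each level and the constant number of extensions). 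This establishes the first assertion, that $S_M(w_J)<\infty$ for $\beta>4$.

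For the second assertion — summability over $M$ and the bound by $1$ — the idea is that each additional accident contributes a multiplicative factor strictly less than $1$ once $\beta$ is large. I would produce a uniform bound $S_M(w_J)\le C\,r^M$ with $r=r(\beta)<1$ for $\beta>17$, so that $\sum_{M\ge 0}S_M(w_J)\le C/(1-r)$, and then check that the constants are good enough to force the total below $1$. The key quantitative estimate is that inserting an accident of admissible length $\ell\in\{3\cdot 2^j\}$ multiplies the relevant ratio by a factor comparable to $\ell^{-\beta}$ times the (bounded) number of ways to realize it; summing the geometric series $\sum_{j\ge 0}(3\cdot2^j)^{-\beta}\cdot(\text{const})$ over admissible generations gives a quantity that tends to $0$ as $\beta\to\infty$ and is $<1$ for $\beta$ large. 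The explicit threshold $\beta>17$ (and the bound $\beta_0<16.6$ for $V=V_0$ claimed in Theorem~\ref{thm-tm}) will require carefully tracking every constant — the number of forbidden extensions per bispecial word, the factor $(|w_J|-1)$ appearing in the denominator, and the geometric ratio of admissible accident lengths.

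\textbf{Main obstacle.}
I expect the hard part to be the $M=0$ (and uniformly the ``base'') contribution together with the bookkeeping of overlapping accidents: Proposition~\ref{prop-fract-bisp} quietly assumes the accident words are disjoint, whereas the text explicitly warns that a later accident may begin before an earlier one finishes. Making the factorization over accidents rigorous in the presence of such overlaps — and checking that overlaps only help (make the summand smaller) rather than hurt — is the delicate combinatorial point. The second genuine difficulty is numerical tightness: getting the sum below $1$ rather than merely finite forces the estimates to be essentially optimal, so any wasteful use of the complexity bound $p(n)\le 4n$ or of the count of forbidden extensions must be avoided, and this is presumably exactly where the earlier proofs in \cite{BL2,Bruin-Leplaid-13} had gaps (the number of words of a given length not being properly accounted for).
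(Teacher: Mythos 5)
There is a genuine gap, and it sits precisely where you yourself point at the end: your key quantitative estimate is wrong. You claim that inserting an accident of admissible length $\ell\in\{3\cdot 2^j\}$ multiplies the relevant ratio by a factor comparable to $\ell^{-\beta}$. But in $S_M(w_J)$ the factor attached to the $i$-th accident is $\bigl(\frac{|v^{i+1}|+1}{|u^i|+1}\bigr)^{\beta}$, with the accident length in the \emph{numerator}: the block $u^i$ need only exceed the accident word by one letter, so this factor can be as close to $1$ as $\bigl(\frac{\ell+1}{\ell+2}\bigr)^{\beta}$, and longer accidents make it larger, not smaller. Consequently neither your geometric series over generations $\sum_j(3\cdot 2^j)^{-\beta}$ nor your fixed-$M$ finiteness reduction to $\sum_m 4m\cdot m^{-\beta}$ is available: after fixing $M$ one must still sum over all $M$-tuples of generations of accident words, and if the complexity bound $p(n)\le 4n$ is applied to count the connecting blocks at every generation, the per-accident contribution does not decay in the generation index at all. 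This is exactly the error mode of \cite{BL2}, \cite{Bruin-Leplaid-13} that you correctly diagnose but do not repair: smallness per accident cannot come from the length of the accident word; it must come from the \emph{scarcity} of words of $L_{\mathrm{TM}}$ that connect two high-generation accident words, and your plan contains no mechanism producing that scarcity.

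The paper's mechanism is a renormalization via unique desubstitution, packaged as an infinite transfer matrix. Each accident corresponds to a minimal forbidden word $avb$ (Lemma \ref{lem-return-bisp}); one forms the matrix $A$ indexed by such triples, with
$A_{(a_1,v^1,b_1),(a_2,v^2,b_2)}=\sum_u (|v^1|+1)^{\beta/2}(|v^2|+1)^{\beta/2}/(|u|+1)^{\beta}$
summed over connecting words $u$, where the symmetric $\beta/2$-splitting of the numerator is what turns $\sum_M S_M(w_J)$ into $\sum_M (A^{M+1})_{(a,v,b),(a,v,b)}$ (Lemma \ref{lem-maj-Sm}). Because every word of $L_{\mathrm{TM}}$ of length at least $4$ is uniquely desubstitutable (Lemma \ref{lem-cass}), a connecting word between accidents of generations $i,j\ge 1$ is, up to a bounded number of choices, the $\theta$-image of a connecting word between accidents of generations $i-1,j-1$; each desubstitution halves lengths and yields a factor $3/2^{\beta/2}$, giving $A_{i,j}\le A_{|i-j|,0}$ and then $A_{i,0}\le (3/2^{\beta/2})^{i}(4/5)^{\beta/2}(4+\varepsilon_1(\beta))$ (Lemma \ref{lem-calcul1}) — the bound $p(n)\le 4n$ is invoked only at generation zero, where it is harmless. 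The uniform row-sum bound $AE\le\lambda(\beta)E$ with $\lambda(\beta)\to 0$ (Lemma \ref{lem_inegalite}) then yields both finiteness of each $S_M(w_J)$ for $\beta>4$ and $\sum_M S_M(w_J)\le \lambda(\beta)/(1-\lambda(\beta))<1$ once $\lambda(\beta)<1/2$, which holds for $\beta>16.6$, uniformly in $w_J$. Your secondary worry about overlapping accident words is legitimate but not the crux; the decisive missing idea is the desubstitution step making connecting words between deep accidents exponentially sparse in the generation.
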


The value of $\beta$ is not optimal, see Section \ref{sec-algo} for the expected value.

\section{Technical lemmas}
\subsection{Infinite matrices}

\begin{definition}
In the following we will consider some array $A$ in $I\times I$, where $I$ is a countable set,  with values in $\overline{\mathbb R_+}$, and some element $P$ of $\mathbb R_+^{I}$. The product $AP$ is well defined in $\overline{\mathbb R_+}^{I}$, and we will call $A$ an infinite matrix and $P$ a vector. All the computations will be made in $\overline{\mathbb R_+}$ in order to avoid problems of convergence. We will denote $E$ the vector with all coordinates equal to $1$.
\end{definition}

We introduce
$I=\{(a,v,b),a,b\in\mathcal A, avb\quad \text{minimal forbidden word}\}$ and

\begin{definition}
Let us introduce the infinite matrix $A$ with coefficients
$$A_{(a_1,v^1,b_1), (a_2,v^2,b_2)}=\sum_{\substack{u\in L_{\mathrm{TM}}\\ u\in v^1b_1\mathcal{A}^*\\ u\in \mathcal{A}^*a_2v^2}} \frac{(|v^1|+1)^{\beta/2}(|v^2|+1)^{\beta/2}}{(|u|+1)^\beta}.$$
\end{definition}

\begin{lemma}\label{lem-maj-Sm}
Consider $w_J$ the minimal forbidden word which defines $J$, then $w_J=avb$ where $v$ is a bispecial word and we obtain
$$\sum_{M\geq 0}S_M(w_J)=\sum_{M\geq 0}(A^{M+1})_{(a,v,b),(a,v,b)}.$$
\end{lemma}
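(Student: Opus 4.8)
The plan is to establish the termwise identity $S_M(w_J)=(A^{M+1})_{(a,v,b),(a,v,b)}$ for every $M\ge 0$ and then sum over $M$ in $\overline{\R_+}$. The factorisation $w_J=avb$ with $v$ bispecial and $avb$ a forbidden bilateral extension is exactly Lemma \ref{lem-return-bisp}, so $(a,v,b)\in I$ and the diagonal entry in the statement makes sense.

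The heart of the proof is a bijection between the return words $u\in R(w_J)$ having exactly $M$ accidents and the paths $(a,v,b)=P_0\to P_1\to\cdots\to P_{M+1}=(a,v,b)$ of length $M+1$ in $I$ (together with the words realising the edges). For the forward map, given such a $u$ one reads the orbit of $uw_J$ and records its successive accidents. By Lemma \ref{lem:accident-bispecial}, applied along the orbit, the $i$-th accident word $v^i$ is a non-strong bispecial word; the letter $a_i$ preceding it and the letter $b_i$ starting the following maximal-in-language word produce a forbidden bilateral extension $a_iv^ib_i$, i.e.\ a vertex $P_i=(a_i,v^i,b_i)\in I$. The maximal-in-language word $u^i$ joining accidents $i$ and $i+1$ lies in $L_{\mathrm{TM}}$, begins with $v^ib_i$ and ends with $a_{i+1}v^{i+1}$, which is precisely the summation range of the matrix entry $A_{P_i,P_{i+1}}$; one sets $P_0=P_{M+1}=(a,v,b)$ with $v^0=v^{M+1}=v$. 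The inverse map glues $u^0,\dots,u^M$ along the bispecial words they share at their ends: the suffix $a_{i+1}v^{i+1}$ of $u^i$ overlaps the prefix $v^{i+1}b_{i+1}$ of $u^{i+1}$ exactly on $v^{i+1}$, which reconstructs $u$ unambiguously.

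For the weight, expanding the matrix product and using the convention $v^0=v^{M+1}=v$ gives
\[
(A^{M+1})_{(a,v,b),(a,v,b)}=\sum_{P_1,\dots,P_M}\ \sum_{u^0,\dots,u^M}\ \prod_{i=0}^{M}\frac{(|v^i|+1)^{\beta/2}(|v^{i+1}|+1)^{\beta/2}}{(|u^i|+1)^{\beta}}.
\]
Each interior factor $(|v^i|+1)^{\beta/2}$ with $1\le i\le M$ occurs in the two adjacent edges and multiplies to $(|v^i|+1)^{\beta}$, whereas the two extreme factors combine to $(|v|+1)^{\beta}=(|w_J|-1)^{\beta}$. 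Hence each summand equals
\[
\left[\frac{(|u^0|+1)\cdots(|u^M|+1)}{(|v^1|+1)\cdots(|v^M|+1)\,(|w_J|-1)}\right]^{-\beta},
\]
which is exactly the summand of $S_M(w_J)$ in Definition \ref{def-SM} and is consistent with the telescoping already carried out in Proposition \ref{prop-fract-bisp}. This yields $S_M(w_J)=(A^{M+1})_{(a,v,b),(a,v,b)}$, and summing over $M\ge 0$ in $\overline{\R_+}$ proves the lemma.

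The main obstacle is the rigour of the bijection rather than the weight computation. Two points require care. First, as observed after Lemma \ref{lem:deux-acc}, a later accident may begin before the previous accident word has ended, so the overlap of consecutive connecting words must be controlled exactly; this is the delicate gluing step where the earlier argument of \cite{Bruin-Leplaid-13} was deficient, and one must check that the accident decomposition is canonical and that the reconstruction is its genuine inverse. Second, one must verify that no spurious accident is created inside a connecting word, which follows because each $u^i$ is maximal in $L_{\mathrm{TM}}$, so $\delta$ decreases by exactly one at each shift inside $u^i$, and that the reconstructed word indeed belongs to $R(w_J)$ and has exactly $M$ accidents, so that the paths counted by $(A^{M+1})_{(a,v,b),(a,v,b)}$ are matched one-to-one with the words counted by $S_M(w_J)$.
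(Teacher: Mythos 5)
Your decomposition, weight symmetrization, and path identification follow the paper's proof of Lemma \ref{lem-maj-Sm} closely: the paper likewise records at each accident the forbidden extension $a_iv^ib_i\in I$, sets $v^0=v^{M+1}=v$, splits each $(|v^i|+1)^{\beta}$ into two half-powers shared by consecutive edges, and recognizes the matrix power; your bookkeeping, including $(|v|+1)^{\beta}=(|w_J|-1)^{\beta}$, reproduces the paper's computation exactly. Your injection direction is also sound, and your treatment of the two delicate points is essentially right: consecutive connecting words overlap exactly on $v^{i+1}$, so gluing is unambiguous, and inside a connecting word $u^i$ every one-letter extension of a suffix-window contains the minimal forbidden word $a_{i+1}v^{i+1}b_{i+1}$, so $\delta$ drops by exactly one until the glue point and no spurious accident occurs.

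However, your claimed bijection --- hence the termwise equality $S_M(w_J)=(A^{M+1})_{(a,v,b),(a,v,b)}$ --- has a genuine gap, located precisely where you assert that ``the reconstructed word indeed belongs to $R(w_J)$''. The intermediate vertices $P_1,\dots,P_M$ of a path counted by $(A^{M+1})_{(a,v,b),(a,v,b)}$ range over \emph{all} of $I$, including $(a,v,b)$ itself. If $P_i=(a,v,b)$ for some $1\le i\le M$, the glued word contains $w_J=avb$ at that interior glue point, so $uw_J\in\mathcal{A}^{+}w_J\mathcal{A}^{+}$ and $u\notin R(w_J)$: such a path corresponds to a concatenation of several return words, not to a single one. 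By minimality of $\mathrm{TM}$, connecting words realizing such paths do exist, so these extra terms carry strictly positive weight and the equality fails for finite $\beta$; what your argument actually establishes is the renewal-type inequality $\sum_{M}S_M(w_J)\le\sum_{M}(A^{M+1})_{(a,v,b),(a,v,b)}$ (first-return paths versus all paths). You should know that the paper's own statement and proof share this defect --- its displayed chain silently enlarges the sum from return words to arbitrary tuples $(a_i,v^i,b_i)$ and $u^i$ constrained only by prefix/suffix conditions --- but the paper only ever uses the inequality (see the proof of Proposition \ref{prop:final}, which invokes $\sum_M S_M(w_J)\leq \sum_M (A^{M+1})_{i,i}$), and the inequality is all that is needed for the bound via $\lambda(\beta)$. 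So your ``$\le$'' direction is correct and matches the paper; the surjectivity claim, and with it the stated ``$=$'', should be dropped or restricted to paths whose interior vertices avoid $(a,v,b)$.
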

\begin{proof}
Let $u$ be a return word of $w_J$ with $M$ accident words $v^i, 1\leq i\leq M$. Each of them has a forbidden extension $a_iv^ib_i\in I$. Set $v^0=v^{M+1}=v$  and let $u^i, 0\leq i\leq M$ be words between accident-words such that $|u^i|>|v^{i+1}|$ and $|u^i|>|v^i|$, see Figure \ref{fig:suite-accidents}.

We use Proposition \ref{prop-fract-bisp} and obtain
$$S_M(w_J)=\prod_{i=0}^M(\frac{|u^i|+1}{|v^{i+1}|+1})^{-\beta}=\prod_{i=0}^M(\frac{|v^{i+1}|+1}{|u^i|+1})^{\beta}.$$
We deduce since $v_{M+1}=v_0$ the following expression
$$S_M=\prod_{i=0}^M \frac{(|v^i|+1)^{\beta/2}(|v^{i+1}|+1)^{\beta/2}}{(|u^i|+1)^\beta}.$$

We obtain with Proposition \ref{prop-fract-bisp}

$$\sum_{M\geq 0} S_M(w_J)=\sum_{M\geq 0}\displaystyle\sum_{(a_i,v^i,b_i)_{1\leq i\leq M}\in I^M}\sum_{\substack{u_0\dots u_M\in L_{\mathrm{TM}}\\ u^i\in v_ib_i\mathcal{A}^*\\ u^i\in \mathcal{A}^*a_{i+1}v^{i+1}}}\prod_{i=0}^M\frac{(|v^i|+1)^{\beta/2}(|v^{i+1}|+1)^{\beta/2}}{(|u^i|+1)^\beta}.
$$

$$=\sum_{M\geq 0}\sum_{(a_i,v^i,b_i)_{1\leq i\leq M}\in I^M}\prod_{i=0}^MA_{(a_i,v^i,b_i),(a_{i+1},v^{i+1},b_{i+1})}=\sum_{M\geq 0} (A^{M+1})_{(a,v,b), (a,v,b)}$$
\end{proof}

Now we estimate the coefficients of $A$.

By definition the accident words are non-strong bispecial words. Thus by property of the Thue-Morse substitution we can describe all the accident words as $v=\theta^{i}(v^0)$ with $v^0\in\{010,101\}$ if $i>0$ and if $i=0, v^0\in\{0,1, 010, 101\}$.

\begin{definition}
For simplicity, we will denote 
$$A_{i,j}=\sum_{\substack{(a_1,v^1,b_1) \text{ generation i}\\(a_2,v^2,b_2) \text{ generation j} }}A_{(a_1,v^1,b_1), (a_2,v^2,b_2)}.$$
\end{definition}

Now we estimate the coefficients of $A$ with the next lemmas. Remark that we lose a fixed factor by looking at $A_{i,j}$ but it will not change the result :

\begin{lemma}
If $i>0, j>0$ then $\begin{cases} A_{i,j}\leq A_{i-j,0}\quad  \text{if}\quad i\geq j\\ A_{i,j}\leq A_{0,j-i} \quad \text{otherwise}\end{cases}$.
\end{lemma}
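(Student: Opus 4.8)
My plan is to reduce everything to the single-step comparison
\[
A_{i,j}\le A_{i-1,j-1}\qquad (i,j\ge 1),
\]
and then iterate. If $i\ge j$, applying this $j$ times descends from $(i,j)$ to $(i-j,0)$, each intermediate index pair still having both entries $\ge 1$ so that the step is legitimate; if $i<j$, applying it $i$ times reaches $(0,j-i)$. Since all the quantities live in $\overline{\R_+}$ and are nonnegative, these chained inequalities make sense even when some $A_{i,j}$ are infinite. So the real content is the one-step inequality, which I would prove by a desubstitution (i.e. $\theta^{-1}$) argument on the words indexing the coefficients.

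For the single step I would fix generation-$i$ and generation-$j$ accident words $v^1=\theta^i(w_1)$, $v^2=\theta^j(w_2)$ (with $w_1,w_2\in\{010,101\}$, $i,j\ge 1$), together with their forbidden extensions $a_1v^1b_1$, $a_2v^2b_2$, and a word $u\in L_{\mathrm{TM}}$ contributing to $A_{(a_1,v^1,b_1),(a_2,v^2,b_2)}$, i.e. $u\in v^1b_1\CA^*\cap\CA^*a_2v^2$. Because $i,j\ge 1$, both $v^1$ and $v^2$ are exact $\theta$-images, hence concatenations of the blocks $\theta(0)=01$, $\theta(1)=10$; since $u$ begins with the whole blocks of $v^1$ and ends with the whole blocks of $v^2$, recognizability of Thue--Morse (unique desubstitution of words of length $\ge 4$, Lemma \ref{lem-cass}) forces the block decomposition of $u$ to begin and end at block boundaries. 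Thus $u=\theta(u_1)$ for a unique $u_1\in L_{\mathrm{TM}}$, with $|u|=2|u_1|$. Reading off first letters of blocks (each $\theta(x)$ starts with $x$), I would check that $u_1$ begins with $v^1_1b_1'$ and ends with $a_2'v^2_1$, where $v^1_1=\theta^{i-1}(w_1)$ and $v^2_1=\theta^{j-1}(w_2)$ are the generation-$(i-1)$ and $(j-1)$ bispecials, and that $a_1'v^1_1b_1'$, $a_2'v^2_1b_2'$ are again forbidden extensions — here the parity rule in the description of forbidden extensions in Lemma \ref{lem-cass} matches exactly the removal of one $\theta$. This makes $u\mapsto u_1=\theta^{-1}(u)$ an injection (injectivity since $\theta$ is injective) from the configurations indexing $A_{i,j}$ into those indexing $A_{i-1,j-1}$.

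It then remains to compare the summands. Writing $a=|v^1_1|$, $b=|v^2_1|$, $c=|u_1|$, I have $|v^1|=2a$, $|v^2|=2b$, $|u|=2c$, and $c\ge a$, $c\ge b$ because $u_1$ contains $v^1_1$ as a prefix and $v^2_1$ as a suffix. After taking $\beta/2$-th roots, the desired term inequality
$\frac{(|v^1|+1)^{\beta/2}(|v^2|+1)^{\beta/2}}{(|u|+1)^\beta}\le\frac{(|v^1_1|+1)^{\beta/2}(|v^2_1|+1)^{\beta/2}}{(|u_1|+1)^\beta}$
reduces to
\[
\frac{2a+1}{a+1}\cdot\frac{2b+1}{b+1}\le\Big(\frac{2c+1}{c+1}\Big)^2,
\]
which holds because $x\mapsto\frac{2x+1}{x+1}=2-\frac{1}{x+1}$ is increasing and $a,b\le c$. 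Summing over all configurations indexing $A_{i,j}$ and using injectivity together with the nonnegativity of the remaining terms of $A_{i-1,j-1}$ yields $A_{i,j}\le A_{i-1,j-1}$.

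The step I expect to be the main obstacle is the combinatorial bookkeeping of the second paragraph: I must be sure the desubstitution is \emph{clean} at both endpoints (no leftover letters are produced), so that $|u|=2|u_1|$ exactly and the endpoints of $u_1$ are genuine forbidden extensions of the lower-generation bispecials, with the correct extension letters. This is precisely where recognizability of $\mathrm{TM}$ and the explicit parity rule for forbidden extensions in Lemma \ref{lem-cass} are indispensable; once these are secured, the arithmetic inequality and the iteration are routine.
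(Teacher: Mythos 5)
Your proposal is correct and follows essentially the same route as the paper: desubstitute $u=\theta(u')$ using unique desubstitutability of words of length $\geq 4$ (Lemma \ref{lem-cass}), compare the summands term by term, and iterate $\min(i,j)$ times. Your monotonicity argument for $x\mapsto\frac{2x+1}{x+1}$ is just a repackaging of the paper's inequality \eqref{inegalite}, and your extra care about clean alignment of the block decomposition at both endpoints and injectivity of $u\mapsto u_1$ makes explicit what the paper leaves terse.
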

\begin{proof}
Consider $u\in v^1b_1\mathcal{A}^*$ and $u\in \mathcal{A}^*a_2v^2$. As $v^1, v^2$ are not of generation $0$ we have for $k=1,2$, that $v_k=\theta(v'_k)$, then by  point 3 of Lemma \ref{lem-cass} there exists a unique $u'$ such that $u=\theta(u')$   with $u'\in L_\theta, u'\in v'_1b_1\mathcal{A}^*, u'\in \mathcal{A}^* \overline{a_2}v'_2$.

$$A_{(a_1,v^1,b_1), (a_2,v^2,b_2)}=\sum_{\substack{u\in L_{\mathrm{TM}}\\ u\in v^1b_1\mathcal{A}^*\\ u\in \mathcal{A}^*a_2v^2}} \frac{(|v^1|+1)^{\beta/2}(|v^2|+1)^{\beta/2}}{(|u|+1)^\beta}.$$

We deduce with $|u|= 2|u'|$
$$ A_{(a_1,v^1,b_1), (a_2,v^2,b_2)}=\sum_{u'}\frac{(2|v'_1|+1)^{\beta/2}(2|v'_2|+1)^{\beta/2}}{(2|u'|+1)^\beta}.$$
Remark that if $0<x<y$ we have the following inequality

\begin{equation}\label{inegalite}
\frac{x+1/2}{y+1/2}<\frac{x+1}{y+1}.
\end{equation} 

Thus we deduce
$$A_{(a_1,v^1,b_1), (a_2,v^2,b_2)}\leq \sum_{u'}\frac{(|v'_1|+1)^{\beta/2}(|v'_2|+1)^{\beta/2}}{(|u'|+1)^\beta}=A_{(\overline{a_1},v'_1,b_1), (\overline{a_2},v'_2,b_2)}.$$
We apply the same process of desubstitution several times and conclude by induction on $min(i,j)$ and summation in order to obtain $A_{i,j}$.
\end{proof}

Before next lemmas, we recall a very classical result used in the following.
\begin{lemma}\label{lem-integrale}
If $\be>2$, and $n_0\geq 2$ is an integer, then 
$$\sum_{n\geq n_0} \frac{1}{n^{\beta-1}}\leq \frac{1}{(\beta-2)(n_0-1)^{\beta-2}}.$$
\end{lemma}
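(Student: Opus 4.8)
The plan is to bound the series by the standard integral comparison test. Consider the function $f(x)=x^{-(\be-1)}$ on $[1,+\8)$. Since $\be>2$ we have $\be-1>1>0$, so $f$ is positive and strictly decreasing there. Because $n_0\ge 2$, every index $n\ge n_0$ satisfies $n-1\ge 1$, so each interval $[n-1,n]$ lies in the region where $f$ is defined and decreasing.

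The first step is the termwise comparison. For a decreasing $f$ and any $n\ge n_0$, on the interval $[n-1,n]$ one has $f(x)\ge f(n)$, hence $f(n)\le\int_{n-1}^{n}f(x)\,dx$. Summing this inequality over all $n\ge n_0$ glues the intervals into a single half-line, giving
$$\sum_{n\ge n_0}\frac{1}{n^{\be-1}}\le\int_{n_0-1}^{+\8}x^{-(\be-1)}\,dx.$$

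The second step is to evaluate the integral explicitly. A primitive of $x^{-(\be-1)}$ is $\frac{x^{-(\be-2)}}{-(\be-2)}$, and since $\be-2>0$ the quantity $x^{-(\be-2)}$ vanishes as $x\to+\8$. Hence the improper integral equals $\frac{(n_0-1)^{-(\be-2)}}{\be-2}=\frac{1}{(\be-2)(n_0-1)^{\be-2}}$, which is precisely the claimed bound.

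There is no genuine obstacle here, as this is a routine calculus estimate; the only points requiring a little care are bookkeeping ones. Choosing the lower limit $n_0-1$ (rather than $n_0$) in the integral bound is exactly what produces the clean constant $(n_0-1)^{\be-2}$ in the denominator, and the hypothesis $n_0\ge 2$ is what guarantees $n_0-1\ge 1$, keeping the comparison inside the region where $f$ is decreasing and the integral finite. The hypothesis $\be>2$ is used twice: to ensure convergence of both the series and the improper integral, and to make the boundary term at $+\8$ vanish.
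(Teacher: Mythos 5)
Your proof is correct: the termwise bound $f(n)\le\int_{n-1}^{n}f(x)\,dx$ for the decreasing function $f(x)=x^{-(\beta-1)}$, summed over $n\ge n_0$, yields $\sum_{n\ge n_0}n^{-(\beta-1)}\le\int_{n_0-1}^{+\infty}x^{-(\beta-1)}\,dx=\frac{1}{(\beta-2)(n_0-1)^{\beta-2}}$, with $\beta>2$ ensuring convergence and the vanishing of the boundary term, and $n_0\ge2$ keeping the lower limit positive. The paper states this lemma without proof as a classical fact, and your integral-comparison argument is exactly the standard one evidently intended (as the label \texttt{lem-integrale} suggests).
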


\begin{lemma}\label{lem-calcul1}
There exists a real function $\varepsilon_1:[0,+\infty)\rightarrow \R$  with $\lim_{+\infty} \varepsilon_1(\beta)=0$, such that if $i>0$, then for $\beta>2$ we have 
$$A_{i,0}\leq (\frac{3}{2^{\beta/2}})^{i}(\frac{4}{5})^{\beta/2}(4+\varepsilon_1(\beta)).$$

\end{lemma}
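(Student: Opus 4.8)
The plan is to estimate $A_{i,0}$ by unfolding it into its finitely many defining blocks and reducing each block to a one–parameter series that Lemma \ref{lem-integrale} can sum. First I would enumerate the index elements. By Lemma \ref{lem-cass} and Definition \ref{def-generation}, a generation-$i$ bispecial is $\theta^i(010)$ or $\theta^i(101)$, of length $3\cdot 2^i$, and each carries exactly two forbidden extensions; hence at most four triples $(a_1,v^1,b_1)$ feed the first index, all with $|v^1|=3\cdot 2^i$. The generation-$0$ triples $(a_2,v^2,b_2)$ range over the fixed finite list coming from $v^2\in\{0,1,010,101\}$, so $|v^2|\le 3$ and $|v^2|+1\le 4$. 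Thus $A_{i,0}$ is a finite sum of coefficients $A_{(a_1,v^1,b_1),(a_2,v^2,b_2)}$, each equal to $(|v^1|+1)^{\beta/2}(|v^2|+1)^{\beta/2}\sum_u (|u|+1)^{-\beta}$, where $u$ runs over $u\in L_{\mathrm{TM}}$ beginning with $v^1 b_1$ and ending with $a_2 v^2$.

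Next I would control the inner sum through the number $N(n)$ of such words of length $n$. The prefix forces $n\ge 3\cdot 2^i+1$, and since $v^1=\theta^i(\cdot)$ I would desubstitute $i$ times, using the unique desubstitution of long factors (Lemma \ref{lem-cass}) and inequality \eqref{inegalite} exactly as in the preceding lemma: a word with prefix $\theta^i(010)$ is, up to a bounded boundary block, the image under $\theta^i$ of a shorter word beginning with $010$, so the complexity bound $p(m)\le 4m$ yields $N(n)=O(n/2^{i})$ with an explicit leading constant. Feeding this into $\sum_{n\ge 3\cdot 2^i}N(n)(n+1)^{-\beta}$ and applying Lemma \ref{lem-integrale} gives a convergent series for every $\beta>2$, which is why finiteness already holds in that range. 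Collecting the powers, $(|v^1|+1)^{\beta/2}=(3\cdot 2^i+1)^{\beta/2}$ weighed against the denominator and the $2^{-i}$ coming from the count produces a geometric factor of the form $(c\,2^{-\beta/2})^{i}$, while the short factor $(|v^2|+1)^{\beta/2}$, evaluated at the shortest admissible $u$ associated with the length-$3$ generation-$0$ bispecials, is what is meant to contribute the base $(4/5)^{\beta/2}$; the leading constant $4$ reflects the complexity slope, and all remaining corrections (the ``$+1$''s, the boundary blocks of the desubstitution, and the tail of the series) are to be packaged into a single $\varepsilon_1(\beta)$ with $\lim_{+\infty}\varepsilon_1=0$.

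The hard part will be the combinatorial count $N(n)$ together with the bookkeeping of the minimal length. I must show, uniformly in $i$, that admissible words carrying a generation-$i$ prefix and a generation-$0$ suffix are genuinely sparse, so that precisely the base $c=3$ — and not the cruder $4$ coming from $p(n)\le 4n$ used raw — survives; the length-$1$ generation-$0$ words $v^2\in\{0,1\}$ give a base already below $1$ and are harmless, so the binding case is the length-$3$ words $v^2\in\{010,101\}$. For these I must also check that such a suffix cannot sit arbitrarily close to the generation-$i$ prefix, which is what forces the correct base $(4/5)^{\beta/2}$ rather than the spurious $(4/3)^{\beta/2}$ that appears if one pairs $|v^2|+1=4$ directly against $|v^1|+1\approx 3\cdot 2^i$. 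Keeping every lower-order term under control so that it collapses into $\varepsilon_1(\beta)$, rather than contaminating either principal base, is the delicate point; the desubstitution identity and inequality \eqref{inegalite} are exactly the tools that make this bookkeeping exact.
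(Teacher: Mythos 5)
Your overall architecture is the paper's (finite index enumeration, $i$-fold desubstitution via unique desubstitutability and inequality \eqref{inegalite}, a base-level series summed with the complexity bound and Lemma \ref{lem-integrale}, main term from the shortest words, remainder packed into $\varepsilon_1$), but your central quantitative claim is false, and it sits exactly at the crux you yourself flag as ``the hard part''. The count $N(n)=O(n/2^i)$ with a constant uniform in $i$ does not hold. A word $u$ of length $n$ with prefix $\theta^i(010)b_1$ desubstitutes at each level as $u=\theta(u')$ or $u=\theta(u')p$ with $p\in\mathcal A$; after $i$ levels, a single level-$i$ word $u^{(i)}$ of length $m=\lfloor n/2^i\rfloor$ can have up to $2^{s}$ preimages of length exactly $n$, where $s$ is the number of ones in the binary expansion of $n-2^im$ and can be as large as $i$. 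For $n\equiv -1\ (\mathrm{mod}\ 2^i)$ this multiplicity argument gives no better than order $4n$ words, not $n/2^i$; and indeed, had $N(n)=O(n/2^i)$ been true, your own computation would yield the geometric base $2$ rather than $3$, a stronger statement than the lemma — a sign the estimate cannot be right. What survives is only the weighted version: for decreasing $f$, $\sum_n N(n)f(n)\le 3^i\sum_m \#\{u^{(i)}\ \text{of length}\ m\}\, f(2^im)$, because $\sum_{e\in\{0,1\}^i}2^{\#\{j:\,e_j=1\}}=3^i$; so the base $3$ is the per-step count of preimages $\theta(u'),\theta(u')0,\theta(u')1$, not a sparsity refinement of the complexity bound.

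This is precisely how the paper proceeds, and your proposal needs that mechanism to close: split the denominator as $(|u|+1)^{\beta}\ge(|u|+1)^{\beta/2}|u|^{\beta/2}$, desubstitute one step at a time, pay the factor $3$ for the three preimages, extract $2^{-\beta/2}$ from $|u|\ge 2|u'|$ on the second factor, and use inequality \eqref{inegalite} to bound $(2|v'|+1)^{\beta/2}/(2|u'|+1)^{\beta/2}$ by $(|v'|+1)^{\beta/2}/(|u'|+1)^{\beta/2}$ on the first; iterating $i$ times produces $(3/2^{\beta/2})^i$ exactly. Two further corrections to your bookkeeping. First, the suffix condition is simply discarded: a given $u$ ends with at most one word $a_2v^2$ coming from a generation-$0$ triple, and one bounds $(|v^2|+1)^{\beta/2}\le 4^{\beta/2}$; no estimate on how close the suffix can sit to the generation-$i$ prefix is needed, and the base $(4/5)^{\beta/2}$ arises after full desubstitution from the minimal length $|u'|=4$, namely $\frac{4^{\beta/2}\cdot 4^{\beta/2}}{5^{\beta/2}\,4^{\beta/2}}=(4/5)^{\beta/2}$. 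Second, the constant $4$ in $(4+\varepsilon_1(\beta))$ is not ``the complexity slope'': it is the number of length-$4$ words of $L_{\mathrm{TM}}$ with prefix $010$ or $101$, which give the main term $4\,(4/5)^{\beta/2}$; the bound $p(n)\le 4n$ enters only the tail $n\ge 5$, which by Lemma \ref{lem-integrale} is of order $(4/5)^{\beta}$ and is absorbed into $\varepsilon_1(\beta)(4/5)^{\beta/2}$.
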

\begin{proof}
Let $(a_1,v^1,b_1)$ be minimal forbidden words of generation $i>0$ and $(a_2, v^2, b_2)$ of generation $0$.
$$\sum_{\substack{(a_2,v^2,b_2)\\ \text{generation} 0}} A_{(a_1,v^1,b_1),(a_2,v^2,b_2)}\leq \sum_{\substack{u\in L_{\mathrm{TM}},\\ u\in v^1b_1A^*}}\frac{(|v^1|+1)^{\beta/2}4^{\beta/2}}{(|u|+1)^{\beta/2}(|u|)^{\beta/2}}$$
Since $i>0$ we desubstitute $v^1$ so that $v_1=\theta(v'_1)$, and we obtain either $|u|=2|u'|$ or $|u|=2|u'|+1$ with $u=\theta(u')$ or $u=\theta(u')p$ with $p\in\mathcal A$ and $u'\in v'_1 b_1\mathcal{A}^*$. Thus we always have $|u|\geq 2|u'|$, and
there are $3$ possibilities to desubstitute $u$, and we obtain
$$\sum_{(a_2,v^2,b_2)} A_{(a_1,v^1,b_1),(a_2,v^2,b_2)}\leq 3\sum_{\substack{u'\in L_{\mathrm{TM}},\\ u\in v'^1b_1A^*}}\frac{(2|v'^1|+1)^{\beta/2}4^{\beta/2}}{(2|u'|+1)^{\beta/2}(2|u'|)^{\beta/2}}$$
With inequality \eqref{inegalite}, since $|v'^1|<|u'|$ we deduce

$$\leq \frac{3}{2^{\beta/2}}\sum_{u'\in L_{\mathrm{TM}}} \frac{(|v'^1|+1)^{\beta/2}4^{\beta/2}}{(|u'|+1)^{\beta/2}(|u'|)^{\beta/2}} $$
We iterate the process $i$ times and obtain the following,
 $$A_{i,0}=\sum_{\substack{(a_1, v^{(i),1},b_1)\\ \text{generation i}}} \frac{3}{2^{\beta/2}}*\frac{(|v'^1|+1)^{\beta/2}4^{\beta/2}}{(|u'|+1)^{\beta/2}(|u'|)^{\beta/2}}$$
 $$A_{i,0}\leq \sum_{\substack{(a'_1,v'^{(i),1},b'_1)\\ \text{generation}\quad 0,\\ |v'^1|=3}}(\frac{3}{2^{\beta/2}})^i\sum_{\substack{u'\in L_{\mathrm{TM}},\\ u'\in v'^{(i),1}b_1A^*}} \frac{4^\beta}{(|u'|+1)^{\beta/2}(|u'|)^{\beta/2}}$$
Remark that the map $v'^{(i),1}\mapsto u'$ is injective.
$$A_{i,0}\leq (\frac{3}{2^{\beta/2}})^i\sum_{\substack{u'\in L_{\mathrm{TM}},\\ |u'|\geq 4}} \frac{4^\beta}{(|u'|+1)^{\beta/2}(|u'|)^{\beta/2}}
$$
There are $4$ words of length four with prefixes $010$ or $101$, and for a fixed length $n\geq 5$ the numbers of words in $L_{\mathrm{TM}}$ is bounded by $4n$ by Lemma \ref{lem-cass}:

$$A_{i,0}\leq (\frac{3}{2^{\beta/2}})^i[4\frac{4^{\beta/2}}{5^{\beta/2}}+\sum_{n\geq 5} \frac{4^\beta 4n}{(n+1)^{\beta/2}n^{\beta/2}}]$$
$$A_{i,0}\leq (\frac{3}{2^{\beta/2}})^i[4. (\frac{4}{5})^{\beta/2}+4^{\beta+1}\sum_{n\geq 5} \frac{1}{n^{\beta-1}}]$$
We use Lemma \ref{lem-integrale}

$$A_{i,0}\leq (\frac{3}{2^{\beta/2}})^i[4. (\frac{4}{5})^{\beta/2}+
20(\frac{4}{5})^{\beta}+
\frac{4*5^2}{\beta-2}(\frac{4}{5})^{\beta}]$$

Since $\frac{2}{\sqrt 5}>4/5$ we 

$$A_{i,0}\leq (\frac{3}{2^{\beta/2}})^i(\frac{4}{5})^{\beta/2}('+\varepsilon_1(\beta))$$

\end{proof}

\begin{lemma}\label{lem-calcul2}
There exists a real function $\varepsilon_2:[0,+\infty)\rightarrow \R$  with $\lim_{+\infty} \varepsilon_2(\beta)=0$, such that for $\beta>2$, $A_{0,0}\leq (20+\varepsilon_2(\beta))(\frac{4}{5})^\beta$.

\end{lemma}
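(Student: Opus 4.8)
The plan is to prove Lemma \ref{lem-calcul2} by a direct finite enumeration, because, contrary to the case $i>0$ treated in Lemma \ref{lem-calcul1}, there is no room to desubstitute when both forbidden words are of generation $0$. By Lemma \ref{lem-cass} and Definition \ref{def-generation}, the generation-$0$ minimal forbidden words are exactly the six words $000,\ 111,\ 00100,\ 11011,\ 10101,\ 01010$; equivalently, the triples $(a,v,b)\in I$ of generation $0$ are the six readings of these words, with $v\in\{0,1\}$ (so $|v|=1$) or $v\in\{010,101\}$ (so $|v|=3$). Hence $A_{0,0}=\sum A_{(a_1,v^1,b_1),(a_2,v^2,b_2)}$ is a sum of at most $36$ inner sums, each running over the words $u\in L_{\mathrm{TM}}$ having $v^1b_1$ as a prefix and $a_2v^2$ as a suffix, weighted by $(|v^1|+1)^{\beta/2}(|v^2|+1)^{\beta/2}/(|u|+1)^{\beta}$.

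The crucial point, and the reason this base case cannot be treated by the cruder estimate of Lemma \ref{lem-calcul1}, is that here I keep the exact numerator and denominator and write each term as $\bigl(\sqrt{(|v^1|+1)(|v^2|+1)}\,/\,(|u|+1)\bigr)^{\beta}$. A short case check on the three possibilities $|v^1|=|v^2|=1$, $\{|v^1|,|v^2|\}=\{1,3\}$ and $|v^1|=|v^2|=3$ (using $|u|\ge 2$, respectively $|u|\ge 4$ and $|u|\ge 4$, since a length-$4$ prefix forces $|u|\ge4$) shows that this base is always $\le 4/5$, with equality forced by $|v^1|=|v^2|=3$ and $|u|=4$, i.e. $u=v^1b_1=a_2v^2$. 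Inspecting the six words, this occurs only for the two pairs $(10101,01010)$ and $(01010,10101)$, giving $u=0101$ and $u=1010$ respectively and a contribution $(4/5)^{\beta}$ each; the length-$5$ and length-$6$ extensions of these $u$ all create the overlap $01010\notin L_{\mathrm{TM}}$, so nothing further appears at order $(4/5)^{\beta}$. Thus the leading contribution is exactly $2\,(4/5)^{\beta}$, which I bound generously by $20\,(4/5)^{\beta}$.

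For every remaining pair the base is bounded by some $\rho<4/5$ at the shortest admissible $u$, so its leading term is $o\bigl((4/5)^{\beta}\bigr)$; the tail is controlled by the complexity bound $p(n)\le 4n$ of Lemma \ref{lem-cass}, which gives that each inner sum is at most $(\text{numerator})\cdot 4\sum_{n\ge n_0}n/(n+1)^{\beta}$, and an application of Lemma \ref{lem-integrale} (legitimate for $\beta>2$) bounds this by $C(\beta-2)^{-1}(4/5)^{\beta}$ for an absolute constant $C$. Summing the finitely many pairs collects all of this into a single term $\varepsilon_2(\beta)(4/5)^{\beta}$ with $\varepsilon_2(\beta)\to 0$, and yields $A_{0,0}\le(20+\varepsilon_2(\beta))(4/5)^{\beta}$.

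I expect the main obstacle to be the two short words $000$ and $111$, where $|v|=1$ and the shortest admissible $u$ has length $2$. Replacing $(|v^i|+1)$ by its maximal value $4$, as was harmless in Lemma \ref{lem-calcul1}, would here force a spurious contribution of order $(4/3)^{\beta}$, which diverges; the whole point is therefore to retain the genuine length of $v$ for these words (their true leading term is $(2/3)^{\beta}$) and, more generally, to verify by hand that every pair other than the two extremal ones sits strictly below the exponential rate $(4/5)^{\beta}$.
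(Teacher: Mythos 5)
Your proposal is correct and follows essentially the same route as the paper's proof: both reduce to the finitely many generation-$0$ pairs, split the inner sums by $|u|$, retain the exact lengths $|v^i|+1$ for the short words $000,111$ (precisely to avoid the divergent $(4/3)^{\beta}$ contribution you flag), and control the tail via the complexity bound $p(n)\le 4n$ together with Lemma \ref{lem-integrale}, absorbing the slower rates such as $(2/3)^{\beta}$ into $\varepsilon_2(\beta)$ since $2/3<4/5$. Your one refinement is the exact identification of the extremal configurations --- the two pairs $(10101,01010)$ and $(01010,10101)$ with $u\in\{0101,1010\}$ --- which gives the sharp leading constant $2+o(1)$, whereas the paper bounds $(|v^i|+1)\le 4$ for all $|u|\ge 4$ and counts words crudely, landing on the (comfortably weaker, and in its displayed arithmetic somewhat loose) constant $20$.
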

\begin{proof}
$$A_{0,0}=\sum_{\substack{(a_1, v^1,b_1),(a_2, v^2, b_2)\\ gen 0}}\sum_{\substack{u\in L_{\mathrm{TM}},\\ u\in v^1b_1A^*,\\ u\in A^* v^2b_2}}\frac{(|v^1|+1)^{\beta/2}(|v^2|+1)^{\beta/2}}{(|u|+1)^\beta}$$
$$A_{0,0}=\sum_{\substack{u\in L_{\mathrm{TM}},\\ \overline{a_1}u\overline{b_2}\in L_\K\\ a_1ub_2 ***}}\frac{(|v^1|+1)^{\beta/2}(|v^2|+1)^{\beta/2}}{(|u|+1)^\beta}$$
where $**$ means: the word begin and ends by forbidden minimal words of generation zero.

We partition the sum in three:
$$A_{0,0}=\sum_{u, |u|=2}+\sum_{u, |u|=3}+\sum_{u, |u|\geq 4}$$
For every word of length at least $4$ we bound the length of the bispecial word by $3$, and there are two such words. For length $3$ there is no such word $0$, and for length two there are two words. Thus we deduce
$$A_{0,0}\leq 2. \frac{2^\beta}{3^\beta}+0+2\sum_{u\in L_{\mathrm{TM}},[u[\geq 4} \frac{4^\beta}{(|u|+1)^\beta}.$$
For a fixed length $n\geq 0$ the numbers of words in $L_{\mathrm{TM}}$ is bounded by $4n$ by Lemma \ref{lem-cass}:
$$A_{0,0}\leq 2. \frac{2^\beta}{3^\beta}+2.4^{\beta}\sum_{n\geq 4} \frac{4(n+1)}{(n+1)^\beta}\leq 2. \frac{2^\beta}{3^\beta}+2.4^{\beta+1}\sum_{n\geq 5}\frac{1}{n^{\beta-1}}.$$
Now we use Lemma \ref{lem-integrale} and obtain
$A_{0,0}\leq 2. \frac{2^\beta}{3^\beta}+20(\frac{4}{5})^{\beta}+\frac{8.5^2}{(\beta-2)}(\frac{4}{5})^\beta$.

We conclude since $2/3<4/5$ that for $\beta>2$ there exists a function $\varepsilon_2$ such that 
$$A_{0,0}\leq (20+\varepsilon_2)(\frac{4}{5})^\beta.$$

\end{proof}

\begin{lemma}\label{lem_inegalite}
There exists $\lambda(\beta)$ such that $AE\leq \lambda E$ for $\beta>4$ and $\lim_{+\infty} \lambda(\beta)=0$.
\end{lemma}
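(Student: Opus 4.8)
The plan is to translate the matrix inequality $AE\le\lambda E$ into a uniform bound on row sums, and then to reduce every row sum to the one-parameter quantities $A_{i,0}$ and $A_{0,j}$ that Lemmas \ref{lem-calcul1} and \ref{lem-calcul2} already control. Since $E$ is the all-ones vector, the coordinate $(AE)_{(a_1,v^1,b_1)}$ is exactly the row sum $\sum_{(a_2,v^2,b_2)\in I}A_{(a_1,v^1,b_1),(a_2,v^2,b_2)}$, so it suffices to exhibit a constant $\lambda(\beta)$, independent of the source, that dominates every such row sum and satisfies $\lambda(\beta)\to 0$. The first observation I would use is that all entries of $A$ are non-negative and that $A_{i,j}$ is by definition the sum of $A_{(a_1,v^1,b_1),(a_2,v^2,b_2)}$ over all sources of generation $i$ and all targets of generation $j$; hence fixing a single source of generation $i$ and summing only over targets of generation $j$ gives a quantity bounded above by $A_{i,j}$. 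Therefore $(AE)_{(a_1,v^1,b_1)}\le\sum_{j\ge 0}A_{i,j}$ whenever the source has generation $i$, and the whole problem reduces to bounding $\sum_{j\ge 0}A_{i,j}$ uniformly in $i$.

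Next I would invoke the reduction lemma, which gives $A_{i,j}\le A_{i-j,0}$ when $i\ge j$ and $A_{i,j}\le A_{0,j-i}$ when $i<j$. For a fixed $i>0$ I split the sum over $j$ at $j=i$:
\begin{align*}
\sum_{j\ge 0}A_{i,j}
&=\sum_{0\le j\le i}A_{i,j}+\sum_{j>i}A_{i,j}
\le\sum_{0\le j\le i}A_{i-j,0}+\sum_{j>i}A_{0,j-i}\\
&=\sum_{k=0}^{i}A_{k,0}+\sum_{k\ge 1}A_{0,k}
\le\sum_{k\ge 0}A_{k,0}+\sum_{k\ge 1}A_{0,k}.
\end{align*}
The case $i=0$ gives $\sum_{j\ge 0}A_{0,j}=A_{0,0}+\sum_{k\ge 1}A_{0,k}$, which is even smaller. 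Thus in every case the row sum is at most
$$\lambda(\beta):=\sum_{k\ge 0}A_{k,0}+\sum_{k\ge 1}A_{0,k},$$
a bound that does not depend on the chosen source.

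It then remains to show that $\lambda(\beta)$ is finite and tends to $0$. The missing estimate is the one for $A_{0,j}$ with $j>0$: this is the mirror image of Lemma \ref{lem-calcul1}, obtained by desubstituting the target bispecial word $v^2$ of generation $j$ instead of the source word, so the same computation yields $A_{0,j}\le(\frac{3}{2^{\beta/2}})^{j}(\frac45)^{\beta/2}(4+\varepsilon_1(\beta))$. For $\beta>4$ one has $\frac{3}{2^{\beta/2}}<\frac34<1$, so both $\sum_{k\ge 1}A_{k,0}$ and $\sum_{k\ge 1}A_{0,k}$ are dominated by convergent geometric series of ratio $\frac{3}{2^{\beta/2}}$; combined with the bound $A_{0,0}\le(20+\varepsilon_2(\beta))(\frac45)^\beta$ of Lemma \ref{lem-calcul2} this makes $\lambda(\beta)$ finite. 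Finally each summand carries a factor $(\frac45)^{\beta/2}$ or $(\frac45)^\beta$ while the geometric sums converge to finite limits as $\beta\to\infty$, so $\lambda(\beta)=O\big((\frac45)^{\beta/2}\big)\to 0$, which is exactly the claim.

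The only genuinely new input is the mirrored version of Lemma \ref{lem-calcul1} for the blocks $A_{0,j}$; everything else is bookkeeping. I expect the main obstacle to be organizing the case split so that each off-diagonal block $A_{i,j}$ is correctly pushed onto a boundary block $A_{k,0}$ or $A_{0,k}$ without double counting, together with the verification that the resulting geometric series is summable precisely in the regime $\beta>4$ stated in the lemma.
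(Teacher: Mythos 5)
Your proposal is correct and follows essentially the same route as the paper: bound the row sums $\sup_i \sum_j A_{i,j}$ via the reduction lemma, control the boundary blocks with Lemmas \ref{lem-calcul1} and \ref{lem-calcul2}, and sum the resulting geometric series, which converges precisely for $\beta>4$ since $2^{\beta/2}>3$ there. If anything you are more careful than the paper on one point: for $j>i$ the paper's proof writes $A_{j-i,0}$ where its own reduction lemma gives $A_{0,j-i}$, silently invoking the symmetry $A_{0,k}=A_{k,0}$ coming from closure of $L_{\mathrm{TM}}$ (and of its set of minimal forbidden words) under reversal, which your mirrored version of Lemma \ref{lem-calcul1} makes explicit.
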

\begin{proof}For every $i$ we consider $\lambda(\beta)= \sup_i \sum_{j}A_{i,j}$ and obtain with Lemma \ref{lem-calcul1} and Lemma \ref{lem-calcul2}:
$$\sum_j A_{i,j}\leq \sum_{j=0}^{i-1} A_{i-j,0}+A_{0,0}+\sum_{j\geq i+1} A_{j-i,0}\leq A_{0,0}+2\sum_{n\geq 1} A_{n,0}$$
$$\sum_{j} A_{i,j}\leq (20+\varepsilon_2(\beta))(\frac{4}{5})^\beta+2\sum_{n\geq 1}(\frac{3}{2^{\beta/2}})^{n}(\frac{4}{5})^{\beta/2}(1+\varepsilon_1(\beta)).$$
For $\beta>4$, we have $2^{\beta/2}>3$, thus we deduce

$$\sum_{j} A_{i,j}\leq (20+\varepsilon_2(\beta))(\frac{4}{5})^\beta+(\frac{4}{5})^{\beta/2}(1+\varepsilon_1(\beta))\frac{6}{2^{\beta/2}-3}$$

$$\lambda(\beta)=(\frac{4}{5})^{\beta/2}[20(\frac{4}{5})^{\beta/2}+\frac{6}{2^{\beta/2}-3}].$$

\end{proof}

\subsection{Proof of Proposition \ref{prop:final}}
Consider the word $w_J$ which defines $J$, then $w_J$ defines a bispecial $v$ of some generation $i$, and we have $\sum_M S_M(w_J)\leq \sum_M (A^{M+1})_{i,i}$ by Lemma \ref{lem-maj-Sm}. 

By Lemma \ref{lem_inegalite} we obtain $A^{M+1}E\leq \lambda(\beta)^{M+1}E$, then we conclude for all integer $i$ that $A^M_{i,i}\leq \lambda(\beta)^M E_{i,i}=\lambda(\beta)^M$. Thus for $\beta>4$ we deduce that $S_M(w_J)$ is finite.

Now if $\lambda(\beta)<1$ we have $\sum_M S_M(w_J)\leq \frac{\lambda(\beta)}{1-\lambda(\beta)}$. Since $\lim_{\beta\rightarrow+\infty} \lambda(\beta)=0$ we deduce the result. Moreover we deduce that $\sum_M S_M(w_J)<1$ if $\lambda(\beta)<1/2$ which is true for $\beta>16.6$, by numerical computation. Finally remark that the computation is independant of the minimal forbidden word $w_J$, due to the inequality involving $\lambda(\beta)$, thus of $J$.

\section{Proof of Theorem \ref{thm-tm}}

\subsection{Conclusion for the potential $V_0$}
We want to compute $\mathcal{L}_{0,\beta,V_0}(\BBone_J)(x)$. 
We use Proposition \ref{prop-operateur-prod-mot}, then Proposition \ref{prop-fract-bisp} which reduce the problem to the convergence of $\sum S_M(w_J)$. Then
Proposition \ref{prop:final} and Theorem \ref{thm:meta-phase} prove the result for the potential $V_0$.

\begin{lemma}\label{lem-indpdt-cylindre}
The value $\beta_0$ is independent of $J$.
\end{lemma}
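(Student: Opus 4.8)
The plan is to trace precisely where the cylinder $J$ enters the chain of estimates established in the technical lemmas, and to observe that it does so only through the choice of a single diagonal index. Recall from Lemma \ref{lem-maj-Sm} that if $w_J=avb$ with $v$ bispecial of generation $i$, then $\sum_{M\geq 0} S_M(w_J)=\sum_{M\geq 0}(A^{M+1})_{(a,v,b),(a,v,b)}$. The infinite matrix $A$ is defined purely in terms of the minimal forbidden words of $L_{\mathrm{TM}}$ and the words of $L_{\mathrm{TM}}$ joining them; it carries no reference to $J$. Thus the only role played by $J$ is to select the coordinate $(a,v,b)$ --- equivalently, the generation $i$ --- at which the diagonal entry of $A^{M+1}$ is read off.

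First I would invoke Lemma \ref{lem_inegalite}, which for $\beta>4$ gives the \emph{uniform} bound $AE\leq\lambda(\beta)E$ with $\lambda(\beta)$ independent of the index and $\lim_{+\infty}\lambda(\beta)=0$. Iterating yields $A^{M+1}E\leq\lambda(\beta)^{M+1}E$, so that every diagonal entry satisfies $(A^{M+1})_{(a,v,b),(a,v,b)}\leq\lambda(\beta)^{M+1}$, the right-hand side being the same for all choices of $(a,v,b)$ since all coefficients are nonnegative. Summing over $M$ then gives, whenever $\lambda(\beta)<1$,
$$\sum_{M\geq 0} S_M(w_J)\leq\sum_{M\geq 0}\lambda(\beta)^{M+1}=\frac{\lambda(\beta)}{1-\lambda(\beta)},$$
a bound that does not depend on $w_J$ and hence not on $J$.

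To finish, I would recall that $\lambda(\beta)$ is defined as $\sup_i\sum_j A_{i,j}$, a quantity built from $A$ alone, and that the threshold $\beta_0$ is the value beyond which the right-hand side above drops below $1$, i.e. $\lambda(\beta)<1/2$. Since this condition involves only $\lambda(\beta)$ and not $w_J$, the resulting $\beta_0$ is the same for every minimal forbidden word, hence for every admissible cylinder $J$; combined with Theorem \ref{thm:meta-phase} this is exactly the independence claimed.

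The main obstacle --- and the reason the statement deserves its own lemma rather than being absorbed into Proposition \ref{prop:final} --- is guaranteeing that the bound on the diagonal entries is genuinely \emph{uniform} in the generation $i$. This is precisely the content of Lemma \ref{lem_inegalite}: the estimates of Lemmas \ref{lem-calcul1} and \ref{lem-calcul2} must be summed over all generations to produce a single $\lambda(\beta)$ dominating every row sum $\sum_j A_{i,j}$ simultaneously. Once that uniform row-sum bound is in hand, the independence of $\beta_0$ from $J$ is immediate, since $J$ never appears in $\lambda(\beta)$.
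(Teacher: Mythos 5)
There is a genuine gap, though not in what you prove --- in what you prove it \emph{about}. Your argument establishes that the bound $\sum_{M\geq 0}S_M(w_J)\leq \lambda(\beta)/(1-\lambda(\beta))$ is uniform over all cylinders $J$ defined by a \emph{minimal forbidden word}, since such a $J$ enters the matrix machinery only through the diagonal index $(a,v,b)$. This is correct, but it is essentially a restatement of the closing remark in the paper's proof of Proposition \ref{prop:final} (``the computation is independant of the minimal forbidden word $w_J$, due to the inequality involving $\lambda(\beta)$''). The lemma, however, concerns an \emph{arbitrary} cylinder $J$ with $J\cap\K=\emptyset$. For such a $J$ the defining word $w_J$ is forbidden but need not be a minimal forbidden word; it then admits no decomposition $w_J=avb$ with $v$ bispecial, Lemma \ref{lem-maj-Sm} does not apply, and your opening claim that ``the only role played by $J$ is to select the coordinate $(a,v,b)$'' fails at the first step --- there is no coordinate to select. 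Indeed, the paper's Remark just before Lemma \ref{lem-return-bisp} announces that this lemma is precisely the place where the restriction to minimal forbidden words gets justified, and that justification is absent from your proof.

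The missing idea is a reduction, not an estimate: given any cylinder $J$ disjoint from $\K$, its defining word contains a minimal forbidden word $w$ as a factor, so $J$ is contained in a preimage under some power of the shift of the cylinder $J'$ defined by $w$; by $\sigma$-invariance, every invariant probability $\mu$ satisfies $\mu(J)\leq\mu(J')$. Hence to show an equilibrium state charges no cylinder outside $\K$, it suffices to show $\mu(J')=0$ for every $J'$ defined by a minimal forbidden word --- and only at that point does your uniform-$\lambda(\beta)$ argument (equivalently Proposition \ref{prop:final} together with Theorem \ref{thm:meta-phase}) close the loop. Without this reduction step, your proof shows uniformity of $\beta_0$ over a subclass of cylinders but leaves unproved the statement for general $J$, which is exactly what the application of Theorem \ref{thm:meta-phase} to the support of equilibrium states requires.
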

\begin{proof}
Consider $J$ a cylinder outside $\mathbb K$. Then $J$ is included in some other cylinder $J'$ defined by a minimal forbidden word
$w$ in $\mathcal L_{\mathbb K}$ and for any $\sigma$ invariant probability measure  we have $\mu(J)\leq \mu(J')$. 

Thus, to prove that an equilibrium measure has support in $\mathrm{TM}$ it is enough to prove that $\mu(J)=0$ for each $J$ defined by a minimal forbidden word.   
We conclude with Theorem \ref{thm:meta-phase} and Proposition \ref{prop:final}.
\end{proof}

\subsection{Last step in the proof of Theorem \ref{thm-tm}}

We conclude the proof with the next lemma:

\begin{lemma}\label{fin}
Assume Theorem \ref{thm-tm} is true for the potential $-\varphi_0$, then it is true for every potential $V(=-\varphi\in \Xi)$.
\end{lemma}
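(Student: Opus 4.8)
The plan is to reduce the general potential $V=-\varphi\in\Xi$ to the already-established case $V_0=-\varphi_0$ by controlling the difference $\varphi-\varphi_0$ along orbits. Recall that $\varphi_0(x)=\log(1+\frac{1}{n+1})=\frac1n+o(\frac1n)$ when $d(x,\mathbb K)=2^{-n}$, so in fact $\varphi_0\in\Xi$ with $g\equiv 1$. For a general $\varphi\in\Xi$ we have $\varphi(x)=\frac{g(x)}{n}+o(\frac1n)$ with $g$ positive and continuous, hence bounded below by some $c>0$ and above by some $C<\infty$ on the compact full shift. The key observation is that everything we proved about $V_0$ went through the Birkhoff sums $S_n\varphi_0$ appearing in the induced operator $\mathcal L_{0,\beta,V_0}(\BBone_J)$, and these sums were expressed (Proposition \ref{prop-operateur-prod-mot}) purely in terms of the depths $\delta(\sigma^k(uw_J))$. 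The same depth quantities govern $S_n\varphi$, so the two Birkhoff sums are comparable term by term.

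First I would make the comparison precise. Since $\varphi$ and $\varphi_0$ are both $O(1/n)$ with the same leading behaviour up to the factor $g$, there is a constant so that for every $x\notin\mathbb K$ with $\delta(x)=n$ one has $c'\,\varphi_0(x)\le \varphi(x)\le C'\,\varphi_0(x)$ for suitable constants $0<c'\le C'$ depending only on the bounds on $g$ and on the error term $o(1/n)$ (absorbing the finitely many small values of $n$ into the constants, which is harmless because $\varphi$ and $\varphi_0$ vanish exactly on $\mathbb K$ and are strictly positive off it). Summing along an orbit segment of a return word $u\in R(w_J)$ gives $c'\,S_{|u|}\varphi_0(ux)\le S_{|u|}\varphi(ux)\le C'\,S_{|u|}\varphi_0(ux)$. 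Consequently $\mathcal L_{0,\beta,V}(\BBone_J)(x)\le \mathcal L_{0,c'\beta,V_0}(\BBone_J)(x)$, since raising each positive Birkhoff sum in the exponent by the factor $c'$ and multiplying $\beta$ correspondingly produces exactly the induced operator for $V_0$ at inverse temperature $c'\beta$.

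The conclusion then follows by invoking what was proved for $V_0$. By Proposition \ref{prop:final} and the discussion preceding Theorem \ref{thm:meta-phase}, there is a threshold $\beta_0'$ (independent of $J$ by Lemma \ref{lem-indpdt-cylindre}) such that $\mathcal L_{0,\beta',V_0}(\BBone_J)(x)<1$ for all $\beta'\ge\beta_0'$. Setting $\beta_0:=\beta_0'/c'$, the comparison above yields $\mathcal L_{0,\beta,V}(\BBone_J)(x)<1$ for every $\beta\ge\beta_0$, uniformly in $J$. Theorem \ref{thm:meta-phase} then applies verbatim to the potential $V$: every equilibrium state gives zero mass to each cylinder $J$ defined by a minimal forbidden word, so its support lies in $\mathrm{TM}$, the unique invariant measure on $\mathrm{TM}$ is the unique equilibrium state, and $P(\beta)=0$ for $\beta\ge\beta_0$.

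The main obstacle I anticipate is making the term-by-term comparison $c'\varphi_0\le\varphi\le C'\varphi_0$ genuinely uniform over all orbits, i.e. handling the $o(1/n)$ error and the small-$n$ regime carefully so that a single pair of constants works for every return word and every cylinder simultaneously. This is where the positivity and continuity (hence compactness-driven boundedness) of $g$ are essential: they guarantee $g$ stays in $[c,C]$ so that the leading terms are uniformly comparable, and the error term, being lower order, cannot spoil the inequality once $n$ is large, while the finitely many small $n$ contribute only a bounded multiplicative distortion that is absorbed into $c',C'$.
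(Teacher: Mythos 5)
Your proof is correct and rests on exactly the same idea as the paper's: the paper's one-line argument likewise starts from the uniform two-sided bound $k'\varphi_0\leq\varphi\leq k\varphi_0$ (your $c',C'$) and transfers the $V_0$ result by rescaling $\beta$. The only difference is where the comparison is applied --- the paper uses it directly on the pressure function ($P_V(\beta)\leq P_{V_0}(k'\beta)=0$ for $\beta$ large, plus $P\geq 0$ via $\mu_{\mathrm{TM}}$) and disposes of uniqueness with ``the rest of the proof is similar,'' whereas you push the bound through the induced operator to get $\mathcal{L}_{0,\beta,V}(\BBone_J)\leq\mathcal{L}_{0,c'\beta,V_0}(\BBone_J)<1$ and reapply Theorem \ref{thm:meta-phase}, which is a legitimate (indeed more complete) implementation of that elided step.
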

\begin{proof}
If $-V\in \Xi$ then there exists $k,k'>0$ such that $k'\varphi_0\leq-V\leq k\varphi_0$. We deduce that the pressure function of the potential $V$ vanishes for $\beta\geq \frac{\beta_0}{k}$. Since this function is continuous, convex and decreasing there exists $\beta'_c$ such that 
$P(\beta)>0 \quad 0\leq \beta\leq \beta'_c$ and $P(\beta)=0, \beta\geq \beta'_c$. 
The rest of the proof is similar. 

\end{proof}

\section{Algorithm for other substitution}\label{sec-algo}
Consider $F(w)=\displaystyle\sum_{\substack{u\in\mathcal{A}^+,\\ w \;\text{prefix of}\; uw}}\prod_{k=0}^{|u|-1} (1+\frac{1}{\delta(\sigma^k(uw))})^{-\beta}$. 

By Proposition \ref{prop-operateur-prod-mot} in order to prove Theorem \ref{thm:meta-phase}, we need to check if there exists $\beta_0$ such that for all $w$ minimal forbidden word of the language of Thue-Morse substitution, $F(w)<1$.

Let us define $p(x)$ the maximal prefix of $x$ in $L_\theta$. 
Consider $v\in L(\theta)$ and $F_n(v,w)=\displaystyle\sum_{\substack{u\in A^n\\ p(uw)=v}}\prod_{k=0}^{|u|-1} (1+\frac{1}{\delta(\sigma^k(uw))})^{-\beta}$. 
We have $$F(w)=\sum_{n\geq 1} F_n(p(w),w).$$
Moreover we have $F_0(p(w),w)=1$ and 
$$F_{n+1}(v,w)=\sum_{\substack{v'\in L_\theta\\ |v'|\leq n+|w|\\ a\in A, p(av')=v}}F_n(v',w)(1+\frac{1}{|v|})^{-\beta}$$

Now we have a test which decide what is the biggest prefix of $u$ inside $\mathcal{L}_{\mathbb K}$. It is optimised for Thue-Morse language.

\begin{remark}
The algorithm has a cost of $n^ 3$ operations. Indeed the number of prefixes of $u$ of length $n$ inside $\mathcal{L}_{\mathbb K}$ is linear in $n$. The linear recurrence formule is a sum over $n^2$ terms.  

For Thue-Morse substitution, numerical experiments seems to imply $4<\beta<6$. Moreover we can conjecture a behavior for the pressure function $P(\beta)$ like the map $ e^{-n(\beta-4)}+\frac{1}{n^{\beta- 2}}$.
\end{remark}

\section{Appendix: Proof of Theorem \ref{thm:meta-phase}}

To finish we give a complete proof of Theorem \ref{thm:meta-phase}. Part of the proof can be found in the following papers of Leplaideur: \cite{lepl-survey} and \cite{Bruin-Leplaid-13}. We recall
$$\mathcal{L}_{z,\beta,V}(g)(x)=\displaystyle\sum_{n\in\mathbb{N}}\sum_{\substack{\tau(y)=n\\ \sigma^n(y)=x}}e^{\beta.S_n(V)(y)-nz}g(y)$$

\begin{lemma}
There exists $z_c(\beta)$ such that for $z>z_c(\beta)$ the quantity $\mathcal L_{z,\beta, V}(g)(x)$ converges for all $x$ and all $g\in \mathcal C(J,\mathbb R)$.
\end{lemma}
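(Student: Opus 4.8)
The plan is to establish convergence by a crude but robust majorization that exploits the finiteness of the alphabet and the compactness of the underlying spaces. First I would record two boundedness facts. Since $J$ is a cylinder in the compact full shift $\mathcal{A}^{\mathbb N}$ and $g$ is continuous, $g$ is bounded; write $\|g\|_{\infty}=\sup_{J}|g|$. Likewise $V=-\varphi$ is a continuous function on the compact space $\mathcal{A}^{\mathbb N}$, hence bounded, and I set $C=\sup|V|$, so that $|(S_nV)(y)|\le nC$ for every $y$ and every $n$.

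Next I would count the preimages. For a fixed $x$ and a fixed return time $n$, every $y$ occurring in the inner sum satisfies $\sigma^n(y)=x$, so $y=ux$ for a word $u$ of length $n$; on the two-letter alphabet there are at most $2^n$ such words. The additional constraints $y\in J$ and $\tau(y)=n$ only shrink this set (indeed $u$ must then be a return word of $w_J$), so the number of terms in the inner sum is at most $2^n$. Combining the two estimates gives, for every $x\in J$,
\begin{equation*}
|\mathcal{L}_{z,\beta,V}(g)(x)|\le \|g\|_{\infty}\sum_{n\in\mathbb{N}}2^n\,e^{\beta nC-nz}
=\|g\|_{\infty}\sum_{n\in\mathbb{N}}\bigl(2\,e^{\beta C-z}\bigr)^n.
\end{equation*}
This geometric series converges as soon as $2e^{\beta C-z}<1$, that is, for $z>z_c(\beta):=\log 2+\beta C$, which proves the lemma.

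I would add one refinement. For $V\in\Xi$ one has $\varphi\ge 0$, hence $V\le 0$ and $(S_nV)(y)\le 0$, so the factor $e^{\beta (S_nV)(y)}$ is at most $1$; this already yields convergence for $z>\log 2$, a threshold independent of $\beta$. As for the difficulty, there is no genuine obstacle here: the statement is a soft well-posedness result, and the only points requiring care are the elementary preimage count by $2^n$ and the boundedness of the potential. The substantive analysis of the operator occurs at the critical parameter $z=0$, where one must decide whether $\mathcal{L}_{0,\beta,V_0}(\BBone_J)$ stays below $1$; the present lemma merely guarantees that $\mathcal{L}_{z,\beta,V}$ is a genuine (finite) operator on $\mathcal{C}(J,\mathbb{R})$ for $z$ in a right half-line, which is the starting point for that analysis.
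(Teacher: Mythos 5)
Your proof is correct, and it takes a slightly different (more quantitative) route than the paper. The paper's proof shares your first step — bound $|g|$ by $\|g\|_{\infty}$ and reduce to the series with $g\equiv 1$ — but then treats $\sum_n c_n e^{-nz}$ abstractly as a power series in $e^{-z}$, invoking the existence of an abscissa of convergence and noting (via the observation that $(S_nV)(y)$ depends only on the return word $u$, not on the tail $x$) that this abscissa does not depend on $x$. You instead majorize the coefficients explicitly: at most $2^n$ preimages at return time $n$ and $|(S_nV)(y)|\le nC$ with $C=\sup|V|$, giving a geometric series and the concrete threshold $z_c(\beta)=\log 2+\beta C$ (improved to $\log 2$ when $V\le 0$). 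Your version buys something the paper's proof actually leaves implicit: a power series a priori could have zero radius of convergence, i.e.\ the paper never checks that its abscissa is finite, and it is exactly your bound $c_n\le 2^n e^{\beta C n}$ that rules this out — so your argument is, strictly speaking, more complete. What the paper's formulation buys in exchange is that its $z_c$ is the \emph{critical} abscissa (the infimum of admissible $z$), which is the object used later in the appendix (monotonicity of $z\mapsto\lambda_z$, the borderline case $z=z_c$ when $\mathcal L_{z_c}(\BBone_J)$ is finite), whereas your $z_c$ is only an upper bound for it; for the existential statement of this lemma, that distinction is immaterial.
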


\begin{proof}
For all $y, y'\in J$ with the same return word  we have $S_nV(y)=S_nV(y')$  with $n=\tau(y)=\tau(y')$, since $-\varphi$ only depends on $J$. 

Now we remark that $$\mathcal{L}_{z,\beta, V}(g)(x)=\displaystyle\sum_{n\in\mathbb{N}}\left(\sum_{\substack{\tau(y)=n\\ \sigma^n(y)=x}}e^{\beta(S_nV)(y)}g(y)\right)e^{-nz}$$

$$\mathcal{L}_{z,\beta, V}(g)(x)\leq ||g||_{\infty}\displaystyle\sum_{n\in\mathbb{N}}\left(\sum_{\substack{\tau(y)=n\\ \sigma^n(y)=x}}e^{\beta(S_nV)(y)}\right)e^{-nz}.$$
It is a power serie in $e^{-z}$. Thus it has an abscissa of convergence which does not depend on $x$. 
\end{proof}

Now we explain the link between invariant measures on $(\Sigma,\sigma)$ and invariant measures on $(J,f)$.
If $\mu$ is an invariant measure defined on $\Sigma$ such that $\mu(J)>0$, then we can define an $f$ invariant probability measure $m$ on $J$ by $m(A)=\frac{\mu(A\cap J)}{\mu(J)}$. Conversely, if $m$ is such a measure, then there exists $\mu$ obtained from $m$ if and only if $\int_J \tau dm<\infty$. 

\begin{lemma}\label{lem-zero}
If $\mu$ is an equilibrium measure for $(\Sigma,\sigma, V)$ with pressure $P$ and $\mu(J)>0$, then $m$ is an invariant
measure for $(J,f,S_{\tau}V-\tau P)$ with zero pressure.
\end{lemma}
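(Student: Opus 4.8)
The plan is to relate the thermodynamic quantities of $(\S,\s,V)$ to those of the induced system $(J,f)$ through the three classical inducing formulas and then read off the conclusion from the equilibrium property of $\mu$. First I would recall \emph{Abramov's formula} $h_\mu(\s)=\mu(J)\,h_m(f)$ and \emph{Kac's formula} $\int_J\tau\,dm=1/\mu(J)$, both valid precisely because $m=\mu(\,\cdot\cap J)/\mu(J)$ is the normalised restriction of the $\s$-invariant probability $\mu$ to the return set $J$. Since every $V\in\Xi$ (and in particular $V_0$) is bounded and continuous, all the integrals below converge and the manipulations are legitimate.

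Next I would establish the Kac-type identity for the potential,
\[
\int_\S V\,d\mu=\mu(J)\int_J S_{\tau}V\,dm,
\]
by cutting the orbit of a $\mu$-a.e.\ point into the successive excursions between consecutive visits to $J$: summing $V$ along one excursion of length $\tau(y)$ issued from $y\in J$ yields exactly $(S_{\tau}V)(y)$, and invariance of $\mu$ together with the decomposition of $\mu$ over these excursions turns the Birkhoff integral of $V$ into the $m$-average of $S_\tau V$ weighted by $\mu(J)$. Combining the three formulas gives the master identity
\[
h_m(f)+\int_J (S_{\tau}V-\tau P)\,dm=\frac{1}{\mu(J)}\Bigl(h_\mu(\s)+\int_\S V\,d\mu-P\Bigr).
\]
Since $\mu$ is an equilibrium state for $(\S,\s,V)$, the bracket on the right vanishes, so the left-hand side equals $0$; thus $m$ realises the value $0$ for the induced potential $S_\tau V-\tau P$.

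To see that $0$ is in fact the \emph{pressure} of the induced system (so that $m$ is an equilibrium state), I would apply the same identity to an arbitrary $f$-invariant probability $m'$ on $J$ with $\int_J\tau\,dm'<\infty$; this $m'$ lifts to a $\s$-invariant probability $\mu'$, and because $P$ is the supremum defining the pressure, the bracket is now $\le 0$. As $\mu'(J)>0$ we obtain $h_{m'}(f)+\int_J(S_{\tau}V-\tau P)\,dm'\le 0$, so the induced pressure is at most $0$ and is attained by $m$.

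The delicate point is the treatment of $f$-invariant measures on $J$ with \emph{infinite} expected return time: such measures do not lift to $\s$-invariant probabilities, so the master identity does not apply to them directly, and one must argue separately—by truncating $\tau$ and passing to the limit, or by invoking the general inducing framework of \cite{lepl-survey}—that they cannot raise the induced pressure above $0$. Verifying the Kac-type potential identity with full rigour (in particular the rearrangement of the sum over excursions) is the other step requiring care.
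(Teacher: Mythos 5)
Your core argument is exactly the paper's proof: Abramov's formula $h_m=h_\mu/\mu(J)$, Kac's lemma $\int_J\tau\,dm=1/\mu(J)$, and the Kac-type identity $\int_\Sigma V\,d\mu=\mu(J)\int_J S_\tau V\,dm$, combined with the equilibrium property $P=h_\mu+\beta\int V\,d\mu$ to yield
$$h_m+\int_J(\beta S_\tau V-\tau P)\,dm=0.$$
Note, however, that this identity is already the \emph{full} content of the lemma: with the convention fixed in the paper's introduction, the pressure \emph{of a measure} means its free energy $h_m+\int\phi\,dm$, so ``zero pressure'' asserts only that $m$ realises the value $0$ for the induced potential, not that $m$ is an equilibrium state of the induced system. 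Your second half --- bounding the induced variational pressure by $0$ via lifting arbitrary $f$-invariant measures $m'$ --- therefore proves more than is claimed, and the delicate point you flag (invariant measures with infinite expected return time, which do not lift) is not a gap in this lemma: the paper never invokes the induced supremum here, but instead, in the proof of Theorem \ref{thm:meta-phase}, compares the zero free energy of $m$ directly with the pressure $\log\lambda_P<0$ of the equilibrium measure produced by Lemma \ref{lem-pression-retour}, so the contradiction is reached without deciding what non-liftable measures can contribute.
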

\begin{proof}
Abramov's formula gives us $h_{m}=\frac{h_\mu}{\mu(J)}$. Moreover we have $\int_J \beta S_\tau Vdm=\frac{1}{\mu(J)} \int_X \beta Vd\mu$. By hypothesis  we deduce 
$$P=h_{\mu}+\int _X\beta V d\mu=\mu(J)[h_m+\int_J \beta S_\tau Vdm]$$
By Kac's lemma we obtain
$$0=\mu(J)[h_m+\int_J \beta S_\tau Vdm-\frac{P}{\mu(J)}]=\mu(J)[h_m+\int_J (\beta S_\tau V-P\tau)dm].$$
$$0=h_m+\int_J (\beta S_\tau V-P\tau)dm$$
We deduce that the pressure of the measure $m$ for the system $(J,f,S_{\tau}V-\tau P)$ is zero. 
\end{proof}

\subsection{Tool from functional analysis}

 We want to use the following theorem by Ionescu- Tulcea, Marinescu. We refer also to  \cite{Hennion-Herve} for more elaborate versions.

\begin{theorem}\cite{Ionescu-Marinescu}\label{ionescu}
Consider a Banach space $X\subset \mathcal C^0(J,\mathbb R)$ with the norm $\|.\|_X$. Consider an operator $\mathcal L$ which acts on $\mathcal C^0(J,\mathbb R)$, and assume
\begin{enumerate}
\item If $(f_n)_n$ is a sequence of functions in $X$ which converges in $\mathcal C^0(J,\mathbb R)$ to a function $f$ and if for all $n\in\mathbb N$, we have 
$\|f_n\|_X\leq C$, then $f\in X$ with $\|f\|_X\leq C$.
\item $\mathcal L$ leaves $X$ invariant and is bounded for $\|.\|_X$
\item There exists $M_z>0$ such that $$\sup_n\{\|\mathcal L^n(f)\|_\infty, f\in X, \|f\|_\infty\leq 1 \}\leq M_z$$
\item There exists an integer $n_0$ and two constants $0<a<1$ and $b\geq 0$ such that $\|\mathcal L^{n_0}(f)\|_X\leq a\|f\|_X+b\|f\|_\infty$ for all $f\in X$.
 \item If $Y$ is bounded in $X$, then $\mathcal L^{n_0}(Y)$ has compact closure in $\mathcal C^0(J)$.
\end{enumerate}
Then $\mathcal L$ is quasi compact on $X$: The spectrum is the union of finitely many isolated complex values which are eigenvalues with strictly dominating modulus and the essential spectrum is contained in an open disk of radius strictly smaller than the modulus of the eigenvalues.

\end{theorem}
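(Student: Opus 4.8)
The plan is to recognise this as the classical Ionescu-Tulcea--Marinescu quasi-compactness criterion and to establish it through the modern route via the essential spectral radius: hypothesis 4 controls a geometrically contracting part, while hypotheses 1 and 5 produce the compactness that confines the rest of the spectrum to finitely many dominating eigenvalues. I would not attempt to describe the whole spectrum at once; instead I would bound $\rho_{\mathrm{ess}}(\mathcal L)$ from above and the spectral radius from below, and let Riesz--Fredholm theory do the separation.

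First I would iterate hypothesis 4. Applying it to $\mathcal L^{kn_0}(f)$ and bounding the sup-norm term by hypothesis 3, namely $\|\mathcal L^{kn_0}(f)\|_\infty\le M_z\|f\|_\infty$, an induction on $k$ gives
$$\|\mathcal L^{kn_0}(f)\|_X\le a^k\|f\|_X+\frac{bM_z}{1-a}\,\|f\|_\infty.$$
Since $0<a<1$ the first coefficient decays geometrically while the second stays bounded; combined with the continuous embedding $X\hookrightarrow C^0(J,\mathbb R)$ and hypothesis 2 for the intermediate iterates, this shows $\sup_n\|\mathcal L^n\|_X<\infty$, so the spectral radius $\rho_X(\mathcal L)$ of $\mathcal L$ on $X$ is finite; in fact the bound gives $\rho_X(\mathcal L)\le 1$.

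The heart of the argument is the estimate of the essential spectral radius. The displayed inequality has exactly the Hennion form $\|\mathcal L^{n}(f)\|_X\le r_n\|f\|_X+R_n\|f\|_\infty$ with $r_{kn_0}=a^k$. I would feed it into Nussbaum's formula $\rho_{\mathrm{ess}}(\mathcal L)=\lim_n\alpha(\mathcal L^n)^{1/n}$ for the measure of non-compactness $\alpha$: the image of the $X$-unit ball under $\mathcal L^{kn_0}$ lies in $a^k$ times that ball plus a set bounded in $\|\cdot\|_\infty$, and hypothesis 5 turns this $\|\cdot\|_\infty$-bounded part into a relatively compact set in $C^0(J,\mathbb R)$, while hypothesis 1 lets $C^0$-limits re-enter $X$ with controlled norm. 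Hence $\alpha(\mathcal L^{kn_0})\le a^k$ and
$$\rho_{\mathrm{ess}}(\mathcal L)\le\lim_{k\to\infty}\bigl(a^k\bigr)^{1/(kn_0)}=a^{1/n_0}<1.$$

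Finally I would invoke the Riesz--Fredholm theory of the resolvent outside the essential spectrum: on $\{\,a^{1/n_0}<|z|\le\rho_X(\mathcal L)\,\}$ the resolvent is meromorphic with poles of finite rank, so the spectrum there is a finite set of isolated eigenvalues of finite algebraic multiplicity, each of modulus strictly larger than $a^{1/n_0}$; these are the dominating eigenvalues, and the remaining essential spectrum sits in the open disk of radius $a^{1/n_0}$. Hypothesis 1 is what guarantees the associated spectral projections have finite rank and take values in $X$. The main obstacle is the middle step: converting the iterated Lasota--Yorke inequality into a genuine bound on $\rho_{\mathrm{ess}}$ requires the careful quantitative use of the measure of non-compactness, and one must check that hypotheses 1 and 5 together supply precisely the compactness needed for the $\|\cdot\|_\infty$-term to contribute nothing to the essential radius.
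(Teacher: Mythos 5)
The paper offers no proof of this statement to compare against: Theorem \ref{ionescu} is imported as a citation to Ionescu-Tulcea and Marinescu (with \cite{Hennion-Herve} mentioned for more elaborate versions), so your attempt can only be judged on its own merits and against the cited literature. On those terms your sketch is correct, and it follows the modern Hennion--Herv\'e route rather than the original 1950 argument: the iterated inequality $\|\mathcal L^{kn_0}f\|_X\le a^k\|f\|_X+\frac{bM_z}{1-a}\|f\|_\infty$ is right (hypothesis 3 tames the sup-norm terms in the induction), the essential spectral radius bound $\rho_{\mathrm{ess}}(\mathcal L)\le a^{1/n_0}<1$ via Nussbaum's formula is the correct key step, and Riesz--Fredholm theory in the annulus finishes the proof. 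Two points in the middle step deserve more care than your wording gives them. First, the claim that $\mathcal L^{kn_0}(B_X)$ ``lies in $a^k$ times the ball plus a $\|\cdot\|_\infty$-bounded set'' is not literally what the Lasota--Yorke inequality provides; the genuine quantitative argument is Hennion's: cover $\mathcal L^{n_0}(B_X)$, which is relatively compact in $\mathcal C^0$ by hypothesis 5, by finitely many sup-norm $\varepsilon$-balls, then apply the iterated inequality to differences of points in the same ball to get an $X$-diameter bound of the form $C'a^{k-1}+C\varepsilon$, whence $\alpha(\mathcal L^{kn_0})\le C''a^{k-1}$ rather than $a^k$ --- the constant is harmless in the $1/(kn_0)$-th root limit, and hypothesis 1 is what lets the covering argument and the spectral projections stay inside $X$ with controlled norm, as you correctly anticipate. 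Second, the hypotheses as stated do not force the spectral radius to exceed $a^{1/n_0}$ (the zero operator satisfies them all), so the conclusion must be read as: the spectrum outside the closed disk of radius $a^{1/n_0}$ consists of finitely many isolated eigenvalues of finite multiplicity --- which is exactly what your argument yields, and suffices for the paper's application, where $\frac{1}{\lambda_z}\mathcal L_z$ has the eigenvector $\BBone_J$ with eigenvalue $1>a^{1/n_0}$. By contrast, the original Ionescu-Tulcea--Marinescu proof predates Nussbaum's formula and constructs the peripheral eigenprojections directly from compactness extraction on averaged iterates; your route is shorter and gives the sharper explicit bound on the essential radius.
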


We consider the operator $\mathcal L_{z,\beta, V}$. We want to apply previous theorem. In order to do so, we need to check the hypothesis.

Consider the subspace $X$ of the Hölder continuous functions $g$ from $J$ to $\mathbb R$ of exponent $\alpha$ defined by the following. 
$$|g(x)-g(y)|\leq Cd(x,y)^\alpha, \forall x,y\in J$$
Consider the following norm
$$\|g\|_X=\sup_J |g(x)|+\sup_{x\neq y\in J} \frac{|g(x)-g(y)|}{d(x,y)^{\alpha}}$$

Remark that $\|g\|_X$ defines a norm, and makes of $X$ a Banach space.
We will prove, using Theorem \ref{ionescu}, that $\lambda_z$, the spectral value, is an eigenvalue.

\begin{lemma}Let $z>z_c$, then the hypothesis of preceding theorem are satisfied if $X$ is the set of Holder functions for operator $\frac{1}{\lambda_z}\mathcal L_z$.
\end{lemma}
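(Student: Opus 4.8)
The plan is to verify, one by one, the five hypotheses of Theorem~\ref{ionescu} for the normalised operator $\frac{1}{\lambda_z}\mathcal{L}_{z}$ (writing $\mathcal{L}_z=\mathcal{L}_{z,\beta,V}$) on the Hölder space $X$. The whole argument rests on one structural observation already exploited above: the inverse branches of the first return map $f$ are the maps $x\mapsto ux$ indexed by the return words $u\in R(w_J)$, the associated weight $W_u:=e^{\beta (S_{|u|}V)(ux)}$ does not depend on $x\in J$ (because $S_{|u|}V$ depends only on $u$), and prepending $u$ contracts distances exactly by the factor $2^{-|u|}$, i.e. $d(ux,ux')=2^{-|u|}d(x,x')$. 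Consequently $\mathcal{L}_z\mathbf{1}$ is the constant $\lambda_z:=\sum_{u\in R(w_J)}W_u\,e^{-|u|z}$, which is finite for $z>z_c$ by the preceding lemma; thus $\mathbf{1}$ is a positive eigenfunction, $\lambda_z$ is the spectral radius, and $\frac1{\lambda_z}\mathcal{L}_z\mathbf{1}=\mathbf{1}$. Positivity then gives, for every $n$ and every $f$ with $\|f\|_\infty\le1$, the bound $\bigl\|(\tfrac1{\lambda_z}\mathcal{L}_z)^n f\bigr\|_\infty\le \bigl\|(\tfrac1{\lambda_z}\mathcal{L}_z)^n\mathbf{1}\bigr\|_\infty=1$.

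The three soft hypotheses are then immediate. Hypothesis~(1) is the lower semicontinuity of the $X$-norm under uniform convergence: evaluating $\sup|f|$ and the Hölder quotients on finitely many points (respectively pairs) and passing to the limit shows $\|f\|_X\le\liminf_n\|f_n\|_X\le C$. Hypothesis~(3) holds with $M_z=1$ by the $L^\infty$-contraction just noted. Hypothesis~(5) follows from Arzel\`a--Ascoli: if $Y$ is bounded in $X$ then, by Hypothesis~(2), $\mathcal{L}_z^{\,n_0}(Y)$ is bounded in $X$, hence uniformly bounded and equicontinuous, and since $J$ is compact it is relatively compact in $\mathcal{C}^0(J)$.

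The core of the proof is the Hölder estimate giving Hypotheses~(2) and~(4). For $g\in X$ and $x\neq x'$ in $J$ we write
$$\mathcal{L}_z g(x)-\mathcal{L}_z g(x')=\sum_{u\in R(w_J)}W_u\,e^{-|u|z}\bigl(g(ux)-g(ux')\bigr),$$
and using $|g(ux)-g(ux')|\le [g]_\alpha\,d(ux,ux')^\alpha=[g]_\alpha\,2^{-\alpha|u|}d(x,x')^\alpha$ together with $|u|\ge1$ we obtain, with $[\cdot]_\alpha$ the Hölder seminorm,
$$[\mathcal{L}_z g]_\alpha\le [g]_\alpha\sum_{u\in R(w_J)}W_u\,e^{-|u|z}2^{-\alpha|u|}\le 2^{-\alpha}\lambda_z\,[g]_\alpha.$$
Combined with $\|\mathcal{L}_z g\|_\infty\le \lambda_z\|g\|_\infty$ (positivity and $\mathcal{L}_z\mathbf{1}=\lambda_z$) this yields $\|\mathcal{L}_z g\|_X\le\lambda_z\|g\|_X$, which is Hypothesis~(2). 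Normalising and writing $[g]_\alpha=\|g\|_X-\|g\|_\infty$ gives
$$\Bigl\|\tfrac1{\lambda_z}\mathcal{L}_z g\Bigr\|_X\le \|g\|_\infty+2^{-\alpha}[g]_\alpha= 2^{-\alpha}\|g\|_X+(1-2^{-\alpha})\|g\|_\infty,$$
which is exactly the Doeblin--Fortet inequality of Hypothesis~(4) with $n_0=1$, $a=2^{-\alpha}\in(0,1)$ and $b=1-2^{-\alpha}\ge0$.

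The delicate point, and the only one that really uses the structure of $f$, is Hypothesis~(4): it requires simultaneously the genuine contraction $2^{-\alpha}<1$ of the Hölder seminorm, which comes from the uniform lower bound $|u|\ge1$ on return times, and the weak $L^\infty$-contraction, which comes from the fact that $\mathcal{L}_z\mathbf{1}$ is the constant $\lambda_z$. All series manipulations are legitimate because $z>z_c$ guarantees absolute convergence and the summands are non-negative, so the term-by-term estimates above are valid. With the five hypotheses verified, Theorem~\ref{ionescu} applies, $\frac1{\lambda_z}\mathcal{L}_z$ is quasi-compact, and hence $\lambda_z$ is an isolated eigenvalue of maximal modulus, as claimed.
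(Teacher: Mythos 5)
Your proposal is correct and takes essentially the same route as the paper's proof: you parameterize preimages by return words $u$, use that the weight $e^{\beta(S_{|u|}V)(ux)}$ is independent of $x$ while prepending $u$ contracts the metric by $2^{-|u|}$, exploit that $\mathcal{L}_z(\BBone_J)$ is the constant $\lambda_z$ to get the sup-norm bound (hypothesis 3), derive the H\"older-seminorm contraction $\frac{\lambda_z}{2^\alpha}$ yielding the Doeblin--Fortet inequality for the normalized operator (hypotheses 2 and 4), and invoke Arzel\`a--Ascoli for hypothesis 5. Your write-up is merely more explicit than the paper's (spelling out $n_0=1$, $a=2^{-\alpha}$, $b=1-2^{-\alpha}$), but the argument is the same.
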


\begin{proof}
We check the different hypotheses. 
\begin{enumerate}
\item Assume $\|f_n\|_X\leq C$ for a sequence of $\alpha$ Hölder functions, then we have $|f_n(x)-f_n(y)|\leq C\|x-y\|^\alpha$. We deduce that $f$ is $\alpha$ Hölder, thus in $X$.
\item We prove that $X$ is invariant by $\CL_Z$:
$$\CL_z(f)(x)=\sum_y e^{\beta (S_k Vy)}f(y)e^{-kz}=\sum_y e^{\beta (S_kVy)}f(yx)e^{-kz}$$
Consider $x, x'\in J$, then the set $\{y, \exists n\in\mathbb N, \sigma^ny=x\}$ is in bijection with the set $\{y', \sigma^ny'=x'\}$ since $\Sigma_J$ is a SFT: Indeed such $y$ can be written $w_J..x=px$ and $p$ contains only one occurence of $w_J$. Thus we can write with $k=\tau(y)$:
\begin{align*}
|\CL_zf(x)-\CL_zf(x')|&\leq \sum_y e^{\beta (SkVy)-kz}|f(yx)-f(yx')|\\
 &\leq \sum_y e^{\beta (S_kVy)-kz}||f||_X d(yx,yx')^{\alpha}\\
 &\leq \|f\|_X\big(\sum_y \frac{e^{\beta(S_kVy)-kz}}{2^{|y|\alpha}}\big)d(x,x')^{\alpha}
\end{align*}
Remak that the sum is finite since $z>z_C$.
\item We know that $\CL_z(1\!\!1_J)$ is a constant function equal to $\lambda_z$. Then we have
$$\|\CL_z^nf\|_\infty\leq \|f\|_\infty.\|\CL_z^n(1\!\!1_J)\|$$
\item We remark that $|y|\geq 1$ if $y\in \sigma^{-1}(x)$. 
By the previous inequality
$$C_{\CL_zf}\leq C_f \sum_y \frac{e^{\beta (S_kVy)}}{2^{|y|\alpha}}\leq \frac{\lambda_z}{2^\alpha}C_f$$
Thus the condition is fulfilled for the operator $\frac{1}{\lambda_z}\CL_z$

We finish with the inequality $\|f\|_X=C_f+\|f\|_\infty$. 
\item We use Ascoli theorem.
\end{enumerate}
\end{proof}

\subsection{Technical lemmas}
We deduce from this lemma:
\begin{corollary}\label{cor-spectre}
For all $z>z_c$, the operator admits a spectral radius $\lambda_z$ which is an eigenvalue and equal to $\mathcal L_z(\BBone_J)$. If $(J,f)$ is mixing, then the eigenspace associated to $\lambda_z$ is of dimension one.
\end{corollary}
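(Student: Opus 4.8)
The plan is to read Corollary \ref{cor-spectre} off from the preceding lemma, which verifies the hypotheses of the Ionescu-Tulcea--Marinescu Theorem \ref{ionescu} for $\frac{1}{\lambda_z}\CL_z$ on the Banach space $X$ of H\"older functions. Theorem \ref{ionescu} then yields quasi-compactness of $\CL_z$ on $X$: the part of the spectrum of maximal modulus consists of finitely many eigenvalues of finite multiplicity, separated from a strictly smaller essential spectral radius. It therefore remains to identify this maximal modulus with the constant $\lambda_z=\CL_z(\BBone_J)$, to check that $\lambda_z$ is itself an eigenvalue, and---under the mixing hypothesis---to prove simplicity. First I would record that $\CL_z$ is a \emph{positive} operator, since its defining kernel has nonnegative entries, and that, as already observed in the proof of hypothesis (3) of the preceding lemma, $\CL_z(\BBone_J)$ is the constant function $\lambda_z$. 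Since on $J$ the indicator $\BBone_J$ is the constant function $1$, this reads $\CL_z\BBone_J=\lambda_z\BBone_J$, so that $\lambda_z$ is an eigenvalue with the strictly positive eigenfunction $\BBone_J$.

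Next I would show that $\lambda_z$ is the spectral radius. Positivity gives, for every $f\in\CC^0(J,\R)$, the pointwise bound $|\CL_z^n f|\le\|f\|_\infty\,\CL_z^n\BBone_J=\lambda_z^n\,\|f\|_\infty\,\BBone_J$, hence $\|\CL_z^n f\|_\infty\le\lambda_z^n\|f\|_\infty$. Consequently every eigenvalue of $\CL_z$ acting on $\CC^0(J,\R)$ has modulus at most $\lambda_z$; in particular this holds for each of the finitely many dominating $X$-eigenvalues supplied by Theorem \ref{ionescu}, whose eigenfunctions lie in $X\subset\CC^0(J,\R)$ and are thus also $\CC^0$-eigenfunctions. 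As $\lambda_z$ is itself such an eigenvalue, the maximal modulus equals $\lambda_z$, and by quasi-compactness it is isolated and of finite multiplicity.

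The remaining step, one-dimensionality of the eigenspace when $(J,f)$ is mixing, is where I expect the main difficulty, since it is the only genuinely non-formal point. I would run the classical Perron--Frobenius--Ruelle argument on the normalized operator $\widehat{\CL}=\lambda_z^{-1}\CL_z$, which satisfies $\widehat{\CL}\BBone_J=\BBone_J$. Given a real eigenfunction $g$ with $\widehat{\CL}g=g$, set $c=\max_J g$ (attained by compactness of $J$ and continuity of $g$) and consider the nonnegative fixed function $h=c\BBone_J-g\ge 0$, which vanishes at a point $x_0$ where $g(x_0)=c$. Evaluating $0=h(x_0)=\widehat{\CL}h(x_0)$ and using that the kernel is nonnegative forces $h$ to vanish on all $f$-preimages of $x_0$; iterating, the zero set of $h$ is backward invariant, and by transitivity of the first-return subshift $(J,f)$ the backward orbit is dense, so $h\equiv 0$ and $g=c\BBone_J$ is constant. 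A complex eigenfunction splits into real ones after the usual reduction, and the aperiodicity encoded by mixing rules out peripheral eigenvalues of modulus $\lambda_z$ other than $\lambda_z$ itself. This gives simplicity of $\lambda_z$ and hence a one-dimensional eigenspace, completing the proof.
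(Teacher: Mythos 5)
Your proposal is correct, and for the first two claims it follows the paper's route: both arguments invoke the preceding lemma to get quasi-compactness of $\frac{1}{\lambda_z}\CL_z$ from Theorem \ref{ionescu}, observe that $\CL_z(\BBone_J)$ is a constant function (this follows from Proposition \ref{prop-operateur-prod-mot}: the induced sums depend only on the return words, not on $x$), so that $\BBone_J$ is a positive eigenfunction with eigenvalue $\lambda_z$, and then use positivity to dominate every other eigenvalue. Your version of the domination step is in fact cleaner than the paper's: you bound $|\CL_z^n f|\le \CL_z^n|f|\le \lambda_z^n\|f\|_\infty\BBone_J$, which handles complex eigenfunctions automatically, whereas the paper applies $\CL_z$ to $\|f\|_\infty\BBone_J-f$ and writes the conclusion as $\|f\|_\infty\lambda\BBone_J-|\mu|f\ge 0$, which as written only makes sense for real $f$. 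Your logical order is also slightly better: you \emph{define} $\lambda_z$ as the constant value of $\CL_z(\BBone_J)$ and then prove it is the spectral radius, avoiding the mild circularity in the paper's phrasing. The genuine divergence is in the simplicity statement: the paper simply cites \cite{Baladi} for ``mixing implies one-dimensional eigenspace,'' while you give a self-contained Perron--Frobenius maximum argument (a real fixed function of the normalized operator attains its maximum, the deficiency $h=c\BBone_J-g\ge 0$ vanishes at the maximum, positivity of the kernel propagates the zero through $f^{-1}$, and density of backward orbits forces $h\equiv 0$; the complexification then follows since the operator preserves real functions). This works here because $(J,f)$ is, on the set of infinitely returning points, the full shift on the countable alphabet of return words of $w_J$ (the paper's remark that $\Sigma_J$ is an SFT), so every backward orbit $\bigcup_n f^{-n}(x_0)$ is indeed dense in $J$ --- a point worth one explicit sentence in your write-up. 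Your argument buys independence from the external reference at the cost of that density verification; the paper's citation is shorter but leaves the reader to check that the hypotheses of the quoted result apply to this induced countable-state setting. One small caveat: your closing assertion that mixing rules out other peripheral eigenvalues of modulus $\lambda_z$ is only sketched, but it is also not needed, since the corollary only claims that the eigenspace of $\lambda_z$ itself is one-dimensional, which your maximum argument fully establishes.
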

\begin{proof}
There is a finite number of non essential eigenvalues thus the supremum exists.

The function $\BBone_J$ is positive, and is an eigenvector associated to some eigenvalue, denoted $\lambda$. Let $\mu$ be another eigenvalue associated to $f$. Then consider the function $||f||_\infty\BBone_J-f$. It is a positive function, thus by definition of $\mathcal L_z$, its image is positive. We deduce $||f||_\infty \lambda \BBone_J-|\mu| f\geq 0$, thus $\lambda\geq |\mu|$.
We conclude that $\lambda_z$ which is the greatest eigenvalue is equal to $\mathcal{L}_z(\BBone_J)$.
If $(J,f)$ is mixing, then by \cite{Baladi} we have that the eigenspace associated to $\lambda_z$ is of dimension one.

\end{proof}

\begin{lemma}\label{lem-pression-retour}

For all $z>z_c$, there exists a unique equilibrium measure $m_z$ for $(J,f,S_{\tau}V-\tau z)$ of pressure $\log \lambda_z$.
The same result is true for $z=z_c$ if $\CL_{z_C}(\BBone_J)$ is finite.
\end{lemma}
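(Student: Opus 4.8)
The strategy is the classical Ruelle--Perron--Frobenius scheme, now available because Corollary \ref{cor-spectre} and Theorem \ref{ionescu} provide, for $z>z_c$, a quasi-compact operator $\CL_{z,\beta,V}$ on the Hölder space $X$ whose spectral radius $\lambda_z$ is a dominating eigenvalue with positive, indeed constant, eigenfunction $\BBone_J$, and which is simple when $(J,f)$ is mixing. The induced system $(J,f)$ is conjugate to a full shift on the countable alphabet of return words $R(w_J)$ (any concatenation of return words is again a legitimate itinerary, as noted in the invariance computation of Corollary \ref{cor-spectre}), hence topologically mixing, so the simplicity hypothesis is met. First I would build the dual conformal measure: consider the continuous map $\nu\mapsto \CL_{z,\beta,V}^{*}\nu/(\CL_{z,\beta,V}^{*}\nu)(\BBone_J)$ on the weak-$*$ compact convex set of Borel probabilities on $J$ and apply Schauder--Tychonoff to get a fixed point $\nu_z$. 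Since $\CL_{z,\beta,V}(\BBone_J)=\lambda_z\BBone_J$, evaluating the defining identity on $\BBone_J$ gives $(\CL_{z,\beta,V}^{*}\nu_z)(\BBone_J)=\lambda_z$, whence $\CL_{z,\beta,V}^{*}\nu_z=\lambda_z\,\nu_z$.

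Because the eigenfunction is the constant $\BBone_J$, the eigenmeasure is already the invariant measure: I would set $m_z=\nu_z$ and check $f$-invariance directly from $\CL_{z,\beta,V}^{*}\nu_z=\lambda_z\nu_z$ by testing against $g\circ f$ and using the identity $\CL_{z,\beta,V}((g\circ f)\,h)=g\,\CL_{z,\beta,V}(h)$ with $h=\BBone_J$. The next step is the Gibbs/bounded-distortion estimate: the Hölder regularity of the induced potential on each return cylinder, combined with the iterated eigenmeasure identity, yields two-sided bounds $m_z([u])\asymp \exp\big(\beta S_{\tau_n}V(y)-\tau_n z-n\log\lambda_z\big)$ for cylinders $[u]$ of $n$ consecutive return blocks. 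Feeding this into the Shannon--McMillan--Breiman theorem gives $h_{m_z}=-\int_J(\beta S_\tau V-\tau z)\,dm_z+\log\lambda_z$, i.e. the pressure of $m_z$ for $(J,f,\beta S_\tau V-\tau z)$ equals $\log\lambda_z$.

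For the variational inequality and uniqueness I would show that every $f$-invariant probability $m$ satisfies $h_m+\int(\beta S_\tau V-\tau z)\,dm\le\log\lambda_z$, with equality forcing $m=m_z$. This is where the spectral gap is essential: the simplicity of $\lambda_z$ from Corollary \ref{cor-spectre} yields $\lambda_z^{-n}\CL^{n}_{z,\beta,V}g\to(\int g\,d\nu_z)\BBone_J$ for $g\in X$, and a standard convexity argument (nonnegativity of the relative entropy of $m$ with respect to the Gibbs measure, vanishing only at $m_z$) closes both the inequality and the equality case. The boundary value $z=z_c$ is then handled by a limiting argument: if $\CL_{z_c,\beta,V}(\BBone_J)$ is finite the operator still maps $X$ into itself and the eigenmeasure construction of the first paragraph applies verbatim, while the distortion bounds are uniform as $z\downarrow z_c$, so $m_{z_c}$ exists and retains the variational characterisation.

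\textbf{Main obstacle.} The delicate point is not the abstract machinery but the fact that $(J,f)$ is an infinite (countable-alphabet) system with unbounded return time $\tau$: one cannot invoke the compact-alphabet theory as a black box, and the Gibbs estimates, the finiteness of $\int\tau\,dm_z$, and the tail control needed for the Shannon--McMillan--Breiman and variational steps all rely on the explicit decay of $\exp(\beta S_\tau V-\tau z)$ coming from $z>z_c$. Controlling these tails uniformly is precisely what is needed to push the argument to the borderline $z=z_c$ under the stated finiteness hypothesis.
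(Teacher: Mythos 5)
Your proposal is correct and follows the same Ruelle--Perron--Frobenius skeleton as the paper's proof --- constant eigenfunction $\BBone_J$, dual eigenmeasure, Gibbs estimates, identification of the pressure with $\log\lambda_z$ --- but it is genuinely more self-contained at precisely the places where the paper relies on black boxes. The paper extracts the eigenmeasure $m_z$ with $\CL_z^*m_z=\lambda_z m_z$ from the quasi-compactness statement (Corollary \ref{cor-spectre}), \emph{asserts} without proof that $m_z$ is a Gibbs measure, and cites \cite{Baladi} for uniqueness of the measure of maximal pressure; you instead build the eigenmeasure by Schauder--Tychonoff (your normalisation remark can even be sharpened: since $\CL_{z,\beta,V}(\BBone_J)=\lambda_z\BBone_J$ identically, $(\CL_{z,\beta,V}^*\nu)(\BBone_J)=\lambda_z$ for \emph{every} probability $\nu$, so the map is affine and Markov--Kakutani already suffices), derive the Gibbs bounds, obtain the pressure via Shannon--McMillan--Breiman, and prove uniqueness by relative entropy. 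Your Gibbs step is in fact cleaner than you claim: the induced potential is locally constant on return-word cylinders (the paper notes that $S_nV(y)$ depends only on the return word $u$, because $\delta(\sigma^k(ux))=\delta(\sigma^k(uw_J))$), so $m_z([u_1\cdots u_n])=\lambda_z^{-n}e^{\beta S_{\tau_n}V-\tau_n z}$ holds \emph{exactly}, with constant $1$, and no distortion argument is needed. Two soft spots you should make explicit: (i) the Shannon--McMillan--Breiman step for a countable partition requires the return-word partition to have finite entropy, which follows from the exact Gibbs formula once $\int_J\tau\,dm_z<\infty$, and that finiteness is exactly what Lemma \ref{lem-mesure-mu} supplies for $z>z_c$ (the paper's cylinder computation quietly assumes the same); (ii) at $z=z_c$ your limiting argument is riskier than necessary --- under the hypothesis that $\CL_{z_c}(\BBone_J)$ is finite, the defining series converges uniformly (the summands do not depend on $x$), so your fixed-point construction and the exact Gibbs identity apply verbatim at $z=z_c$, whereas a limit $z\downarrow z_c$ would force you to control $\int\tau\,dm_z$ uniformly, which may fail at the critical point; note the paper's own proof never treats $z=z_c$ at all. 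Finally, your observation that $(J,f)$ is conjugate to the full shift over the countable alphabet $R(w_J)$, hence topologically mixing, is correct (free concatenation of return words follows from the definition of $R(w_J)$) and is what legitimises invoking the simplicity statement of Corollary \ref{cor-spectre}; the paper leaves this unremarked. On balance your route proves what the paper outsources, at the cost of needing the countable-alphabet caveats you correctly identify as the main obstacle.
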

\begin{proof}
We  consider  $z>z_c$, then Corollary \ref{cor-spectre} shows that there exists a measure $m_z$ such that $$\CL_z(\BBone_J)=\lambda_z \BBone_J, \CL_z^*(m_z)=\lambda_zm_z.$$ 
Remark that $m_z$ is a measure by positivity of the operator.
Then we consider the measure defined by $\BBone_J m_z$. It is clear that it is an invariant measure on $J$, and by \cite{Baladi} it is the unique measure of maximal pressure.

Consider $x$ and a cylinder $C_p(x)$  of length $p$ for $(J,f)$ which contains $x$. Then we compute
$m_z(C_p(x))=\int 1\!\!1_{C_p(x)}dm_z$. By definition of $m_z$ we deduce 

$$
m_z(C_p(x))=\frac{1}{\lambda_z}\int \CL(1\!\!1_{C_p(x)})dm_z=\frac{1}{\lambda_z^p}\int \CL^p(1\!\!1_{C_p(x)})dm_z$$

$$\CL_z(1\!\!1_{C_p})(x)=\sum_n\sum_y e^{\beta S_n V(y)} e^{-nz}=\sum_n\sum_ye^{\beta S_{\tau(y)}V(y)}e^{-\tau(y)z}$$
It is a constant function since we need to find all $u$ of length $n$ which start with $w$ in order to have $y=ux$ with $u$ of length $n$.
We iterate and obtain

$$\CL^p_z(1\!\!1_{C_p(x)})(x)=\sum_{k_i}\sum_{y, \sigma^{k_1+\dots+k_p}(y)=x}e^{\beta S_{k_1+\dots+k_p}V(y)-(k_1+\dots+k_p)z} $$
Thus 
\begin{equation}\label{eq-pression-m}
\lambda_z^p m_z(C_p(x))=\CL^p(1\!\!1_C)(x)=\sum_{k_i}\sum_{y, \sigma^{k_1+\dots+k_p}(y)=x}e^{\beta S_{k_1+\dots+k_p}V(y)-(k_1+\dots+k_p)z}.
\end{equation}

Now we use that $m_z$ is a Gibbs measure, and thus up to some multiplicative constant we obtain

$$\lambda_z^p m_z(C_p(x))\thickapprox \sum_{k_i}\sum_{y, \sigma^{k_1+\dots+k_p}(y)=x}m_z([y])e^{pP(J,f)} $$

$$\lambda_z^p m_z(C_p(x))\thickapprox  e^{p P(J,f)} m_z(C_p(x))$$

$$pP(J,f)=p\log \lambda_z+cst$$
since it is true for all $p$, we deduce
$$\log \lambda_z=P(J,f)=h_{m_z}+\int_J (\beta S_{\tau}V -\tau z)d m_z$$

Thus $\log\lambda_z$ is the pressure of the measure $m_z$.
\end{proof}

\subsection{Last part of the proof}
Remark that $z\mapsto \lambda_z$ is decreasing.

\begin{lemma}\label{lem-mesure-mu}
There exists a measure $\mu_z$ invariant for the system such that $(\mu_z)_{|J}=m_z$ if and only  if there exists $x\in J$ such that $\mathcal L_z(\tau)(x)$ converges. It is the case for $z>z_c$.
\end{lemma}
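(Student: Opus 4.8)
The plan is to establish the correspondence between invariant measures on the induced system $(J,f)$ and invariant measures on $(\Sigma,\sigma)$, using the return-time function $\tau$ and Kac's lemma. First I would recall the general construction already described just before Lemma~\ref{lem-zero}: given an $f$-invariant probability $m$ on $J$, one builds the $\sigma$-invariant measure $\mu$ on $\Sigma$ as the suspension over $J$ with return-time ceiling $\tau$, by pushing forward $m\times(\text{counting on fibers})$ under the iterates $\sigma^k$ for $0\le k<\tau$ and normalizing. The normalization constant is $\int_J\tau\,dm$, so this measure is a genuine probability (equivalently, $\mu(J)>0$ and $(\mu)_{|J}=m$) precisely when $\int_J\tau\,dm<\infty$.

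The second step is to recognize $\int_J\tau\,dm_z$ as a quantity computable through the transfer operator. Since $m_z$ is the eigenmeasure with $\CL_z^*(m_z)=\lambda_z m_z$, and since $\BBone_J m_z$ is the invariant density (Lemma~\ref{lem-pression-retour}), I would express
\begin{equation*}
\int_J\tau\,dm_z=\frac{1}{\lambda_z}\int_J\CL_z(\tau)\,dm_z,
\end{equation*}
using $\CL_z^*(m_z)=\lambda_z m_z$. Here $\CL_z(\tau)(x)=\sum_{n}\sum_{\tau(y)=n,\,\sigma^n(y)=x}e^{\beta S_nV(y)-nz}\,n$, i.e.\ the operator applied to the function $\tau$ rather than to $\BBone_J$. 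Because $S_nV(y)$ depends only on the return word (as noted repeatedly), $\CL_z(\tau)$ is a constant function of $x$, so $\int_J\CL_z(\tau)\,dm_z=\CL_z(\tau)(x)$ for any $x\in J$. Thus $\int_J\tau\,dm_z$ is finite if and only if $\CL_z(\tau)(x)$ is finite for one (equivalently every) $x\in J$.

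Combining the two steps gives the equivalence: $\mu_z$ exists as a $\sigma$-invariant probability restricting to $m_z$ if and only if $\int_J\tau\,dm_z<\infty$, which holds if and only if $\CL_z(\tau)(x)$ converges for some $x\in J$. For the final assertion, I would show convergence for $z>z_c$. The point is that $\CL_z(\BBone_J)$ already converges for $z>z_c$ by the abscissa-of-convergence lemma, and inserting the extra polynomial factor $n=\tau(y)$ does not destroy convergence strictly inside the domain: the series defining $\CL_z(\BBone_J)$ is a power series in $e^{-z}$, and differentiating term-by-term (or comparing $n\,e^{-nz}$ with $e^{-n(z-\epsilon)}$ for small $\epsilon>0$ with $z-\epsilon>z_c$) shows $\CL_z(\tau)(x)=-\frac{d}{dz}\CL_z(\BBone_J)(x)$ is finite on the open half-line $z>z_c$.

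The main obstacle I expect is the careful justification of the suspension construction in the ''only if'' direction, namely that $\int_J\tau\,dm<\infty$ is genuinely \emph{necessary} for an invariant probability on $\Sigma$ to exist: one must argue that any $\sigma$-invariant probability $\mu$ with $(\mu)_{|J}$ proportional to $m_z$ forces $\mu(\Sigma)=\mu(J)\int_J\tau\,dm_z$ via Kac's formula, so finiteness of the total mass is equivalent to finiteness of the integral. Once the identity $\int_J\CL_z(\tau)\,dm_z=\lambda_z\int_J\tau\,dm_z$ is in hand, the rest is the comparison argument for the power series, which is routine; the delicate bookkeeping is entirely in the measure-correspondence and the constancy of $\CL_z(\tau)$.
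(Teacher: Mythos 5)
Your proposal is correct and follows essentially the same route as the paper's proof: the classical suspension/Kac criterion that $\mu_z$ exists iff $\int_J\tau\,dm_z<\infty$, the eigenmeasure duality $\int_J\CL_z(\tau)\,dm_z=\lambda_z\int_J\tau\,dm_z$ combined with the constancy of $\CL_z(\tau)$ on $J$, and the identification $\CL_z(\tau)=-\frac{d}{dz}\CL_z(\BBone_J)$ to deduce convergence strictly inside the abscissa $z>z_c$. You merely spell out details the paper leaves implicit (the necessity direction via Kac's formula, and the term-by-term differentiation of the power series in $e^{-z}$), which is fine.
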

\begin{proof}
From the invariant measure $m_z$ of $(J,f)$ we want to construct a measure $\mu_z$ on the full shift. By a classical result, a necessary and sufficient condition is $\int \tau_J dm_z<\infty$,

The problem is reduced to the convergence of $\int \tau dm_z$. By definition of $m_z$ we have for all $f\in \mathcal C(J,\mathbb R)$, $\int_J \CL_z(f)dm_z=\lambda_z\int_J f dm_z$. We apply the equality for $f=\tau$ (or to a sequence of continuous functions which converges to $\tau$)and use the fact that $\CL_z(\tau)$ is a constant function.
It is the same as the convergence  $\CL_z(\tau)(x)$ for all $x\in J$. 

By definition, $\mathcal L_z(\tau)=\sum_n\sum_{\tau(y)=n}e^{S_nV(y)}ne^{-nz}$, 
thus it is the derivative with respect to $z$ of $-\mathcal L_z(\BBone_J)$. We deduce the convergence if $z>z_C$ by the hypothesis on $\CL_{z}(\BBone_J)$.
\end{proof}

\begin{lemma} If $z>z_c$ and $\mu(J)>0$, we obtain
$P(\Sigma,\sigma, \mu_z, V)=z+\mu_z(J)\log\lambda_z$ for $\beta\geq \beta_0$.
\end{lemma}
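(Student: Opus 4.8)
The plan is to run the computation of Lemma~\ref{lem-zero} in reverse. There one started from an equilibrium state on $\Sigma$ and deduced that the induced measure has zero pressure for the induced potential; here I would instead start from the measure $m_z$, whose pressure for $(J,f,S_\tau V-\tau z)$ equals $\log\lambda_z$ by Lemma~\ref{lem-pression-retour}, and recover the pressure for $(\Sigma,\sigma,V)$ of the measure $\mu_z$ that it induces on the full shift. The only ingredients needed are the three standard identities relating a transformation to its first-return map: Abramov's formula $h_{m_z}=h_{\mu_z}/\mu_z(J)$, Kac's lemma $\int_J\tau\,dm_z=1/\mu_z(J)$, and the companion relation $\int_J S_\tau V\,dm_z=\frac{1}{\mu_z(J)}\int_\Sigma V\,d\mu_z$ already used in the proof of Lemma~\ref{lem-zero}.

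First I would note that, since $z>z_c$ and $\mu_z(J)>0$, Lemma~\ref{lem-mesure-mu} ensures $\int_J\tau\,dm_z<\infty$, so that the invariant measure $\mu_z$ with $(\mu_z)_{|J}=m_z$ genuinely exists and every quantity below is finite. Then I would rewrite the pressure identity of Lemma~\ref{lem-pression-retour} as $\log\lambda_z=h_{m_z}+\beta\int_J S_\tau V\,dm_z-z\int_J\tau\,dm_z$ and insert the three identities. Multiplying through by $\mu_z(J)$ makes the dependence on $J$ cancel and leaves
$$\mu_z(J)\log\lambda_z=h_{\mu_z}+\beta\int_\Sigma V\,d\mu_z-z.$$
Since $h_{\mu_z}+\beta\int_\Sigma V\,d\mu_z$ is, by definition, the pressure $P(\Sigma,\sigma,\mu_z,V)$ of the measure $\mu_z$, a simple rearrangement gives exactly $P(\Sigma,\sigma,\mu_z,V)=z+\mu_z(J)\log\lambda_z$.

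The algebra here is routine, so I expect the real difficulty to be analytic rather than computational: making sure that Abramov's and Kac's formulas, and the integral relation for $S_\tau V$, may legitimately be applied. Everything hinges on the integrability $\int_J\tau\,dm_z<\infty$, which is precisely the point at which the restriction $z>z_c$ is used, via Lemma~\ref{lem-mesure-mu}, and which is why the statement is confined to that range; the standing assumption $\beta\ge\beta_0$ only serves to place us in the regime where $\lambda_z$, $m_z$ and $\mu_z$ have already been produced. Because $\tau$ is unbounded and defined only $m_z$-almost everywhere, the careful version of the argument should approximate $\tau$ by bounded continuous functions and pass to the limit, exactly as in the proof of Lemma~\ref{lem-mesure-mu}.
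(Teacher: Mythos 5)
Your proposal is correct and takes essentially the same route as the paper: both start from the identity $\log\lambda_z=h_{m_z}+\int_J(\beta S_\tau V-\tau z)\,dm_z$ of Lemma~\ref{lem-pression-retour} and then apply Abramov's formula, Kac's lemma, and the relation $\int_J \beta S_\tau V\,dm_z=\frac{1}{\mu_z(J)}\int \beta V\,d\mu_z$, multiplying through by $\mu_z(J)$ to rearrange. Your explicit invocation of Lemma~\ref{lem-mesure-mu} to secure $\int_J\tau\,dm_z<\infty$ before applying these formulas is the same justification the paper gives, only stated more carefully.
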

\begin{proof}
By Lemma \ref{lem-pression-retour}, $\log \lambda_z$ is the pressure of the system $(J, f)$ with potential $S_rV-\tau z/\beta$. Moreover $m_z$ is the equilibrium state. Thus we have
$$\log\lambda_z=h_{m_z}+\int_J (\beta S_\tau V-\tau z)dm_z.$$

Abramov's formula give us $h_{m}=\frac{h_\mu}{\mu(J)}$. Moreover we have $\int_J \beta S_\tau Vdm=\frac{1}{\mu_z(J)} \int_X \beta Vd\mu$. With Lemma \ref{lem-mesure-mu} we deduce 
$$h_{\mu_z}+\int _X\beta V d\mu_z=z+\mu_z(J)\log \lambda_z$$
\end{proof}

\begin{corollary}
If $z\geq P$, then $\log\lambda_z\leq 0$.
\end{corollary}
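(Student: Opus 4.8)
The plan is to deduce the corollary directly from the identity established in the immediately preceding lemma, which expresses the pressure of the concrete invariant measure $\mu_z$ as a function of $z$ and $\lambda_z$, and then to compare that pressure against the global supremum $P$.

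First I would fix $z>z_c$, so that by Lemma \ref{lem-mesure-mu} the induced equilibrium state $m_z$ lifts to a $\sigma$-invariant probability $\mu_z$ on $\Sigma$ with $\mu_z(J)=1/\int_J\tau\,dm_z\in(0,1]$, in particular $\mu_z(J)>0$. The previous lemma then furnishes the identity
$$h_{\mu_z}+\beta\int_X V\,d\mu_z=z+\mu_z(J)\log\lambda_z.$$
Its left-hand side is the pressure of one \emph{particular} invariant measure, hence cannot exceed the supremum defining the global pressure, so that
$$z+\mu_z(J)\log\lambda_z=h_{\mu_z}+\beta\int_X V\,d\mu_z\le P.$$
Rearranging gives $\mu_z(J)\log\lambda_z\le P-z$; imposing $z\ge P$ makes the right-hand side nonpositive, and dividing by the strictly positive number $\mu_z(J)$ yields $\log\lambda_z\le 0$. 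Equivalently, in contrapositive form, $\log\lambda_z>0$ forces $z<z+\mu_z(J)\log\lambda_z\le P$, that is $z<P$, which is the same statement.

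The only delicate point, and the step I expect to require the most care, is the interface with the abscissa of convergence $z_c$: the identity above is available only for $z>z_c$ (and its boundary value), so I must make sure that every $z\ge P$ actually lies in that range. In the present setting this is automatic. On the one hand $P\ge 0$, because the measure $\mu_\mathrm{TM}$ carried by $\mathrm{TM}$ has zero entropy and satisfies $\int V\,d\mu_\mathrm{TM}=0$, so that already $h_{\mu_\mathrm{TM}}+\beta\int V\,d\mu_\mathrm{TM}=0$ forces $P\ge 0$. On the other hand the working hypothesis of the meta-theorem makes $\lambda_0=\mathcal{L}_{0,\beta,V}(\BBone_J)$ finite, so $0$ lies in the region of convergence and $z_c\le 0$, the boundary value itself being admissible by the finiteness clauses of Lemma \ref{lem-pression-retour} and Lemma \ref{lem-mesure-mu}. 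Combining the two bounds, every $z\ge P$ satisfies $z\ge 0\ge z_c$, so the formula applies throughout $\{z\ge P\}$ and the problematic region $z<z_c$, where $\lambda_z=+\infty$, is never reached. To finish I would invoke the monotonicity recorded just above the lemma, namely that $z\mapsto\lambda_z$ is decreasing, which shows that once $\log\lambda_z\le 0$ holds at $z=P$ it persists for every larger $z$, giving the corollary on the whole half-line $\{z\ge P\}$.
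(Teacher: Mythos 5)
Your proof is correct and follows essentially the same route as the paper: you invoke the identity $h_{\mu_z}+\beta\int V\,d\mu_z=z+\mu_z(J)\log\lambda_z$ from the preceding lemma, bound the left side by the global pressure $P$ since $\mu_z$ is an invariant measure, and divide by $\mu_z(J)>0$. Your additional verification that $\{z\ge P\}$ lies in the convergence region (via $P\ge 0$ and $z_c\le 0$ from the hypothesis $\mathcal{L}_{0,\beta_0,V_0}(\BBone_J)<1$) is a worthwhile precision that the paper leaves implicit.
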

\begin{proof}
The left term has for upper bound $P$ since $\mu_z$ is an invariant measure for the global system. We deduce  $\lambda_z\leq 1$ for $z\geq P$.
\end{proof}

\begin{lemma}
We have $P(\beta) \geq 0$ for all $\beta\geq 0$.
\end{lemma}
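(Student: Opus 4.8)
The plan is to exhibit, for every $\beta \geq 0$, at least one shift-invariant probability measure whose free energy $h_\mu + \beta \int V\,d\mu$ is nonnegative; since $P(\beta)$ is a supremum over all such measures, this forces $P(\beta) \geq 0$. The natural candidate is the unique invariant measure $\mu_{\mathrm{TM}}$ supported on $\mathrm{TM}$. The key observation is that for this measure both terms in the free energy vanish, giving exactly $0$.

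First I would recall that $\mu_{\mathrm{TM}}$ is a shift-invariant probability by Lemma \ref{lem-cass} (it is the unique such measure on the uniquely ergodic system $\mathrm{TM}$), so it is an admissible competitor in the supremum defining $P(\beta)$. Next I would compute its free energy. For the entropy term, $\mathrm{TM}$ has zero topological entropy — indeed its complexity satisfies $p(n) \leq 4n$, so $h_{\mu_{\mathrm{TM}}} = 0$. For the potential term, every $x \in \mathrm{TM}$ lies in $\mathbb{K} = \mathrm{TM}$, so by the defining property of any $V = -\varphi \in \Xi$ we have $\varphi(x) = 0$ and hence $V(x) = 0$ for all $x$ in the support of $\mu_{\mathrm{TM}}$. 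Therefore $\int V\,d\mu_{\mathrm{TM}} = 0$. Combining, the free energy of $\mu_{\mathrm{TM}}$ equals $0 + \beta \cdot 0 = 0$.

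Finally I would conclude: since $P(\beta)$ is the supremum of $h_\mu + \beta \int V\,d\mu$ over all invariant $\mu$, and the particular choice $\mu = \mu_{\mathrm{TM}}$ achieves the value $0$, we obtain $P(\beta) \geq 0$ for every $\beta \geq 0$.

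I do not anticipate a serious obstacle here; the statement is essentially a lower-bound-by-a-test-measure argument. The only point requiring a little care is ensuring that $V$ genuinely vanishes on all of $\mathrm{TM}$, which is immediate from the first bullet in the definition of $\Xi$ (and from Definition \ref{def-potentiel0} in the case $V = V_0$, where $d(x,\mathbb{K}) = 0$ makes $\varphi_0$ vanish), together with the fact that $\mu_{\mathrm{TM}}$ is supported precisely on $\mathrm{TM} = \mathbb{K}$. Once that is noted, the inequality is a one-line consequence of the variational definition of pressure.
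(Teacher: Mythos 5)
Your proof is correct and is exactly the paper's argument: the paper's one-line proof also tests the supremum against $\mu_{\mathbb{K}}=\mu_{\mathrm{TM}}$, and you have merely filled in the (easy) details that $h_{\mu_{\mathrm{TM}}}=0$ and $\int V\,d\mu_{\mathrm{TM}}=0$. No gaps; nothing further to change.
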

\begin{proof}
Consider the measure $\mu_{\mathbb K}$, and the fact that $P$ is the supremum over all the invariant measures.
\end{proof}

\subsection{Proof of Theorem \ref{thm:meta-phase}.} 

We assume that $\mu(J)\neq 	0$ for an equilibrium measure $\mu$ of $(\Sigma,\sigma,\beta V)$. 

By Lemma \ref{lem-zero} we find a measure $m$ for $(J,f)$ of zero pressure. Since $P\geq 0$, by Lemma \ref{lem-pression-retour} there exists $m_P$ an equilibrium measure for $S_\tau V-\tau P$ of pressure $\log\lambda_P$. 
Thus the pressure of $m_P$ is bigger thant the pressure of $m$. Moreover since $z\mapsto \log \lambda_z$ is decreasing, thus $\log \lambda_P\leq \log\lambda_z<0$, which is a contradiction.


Therefore by the hypothesis of uniformity on $J$ of $\beta_0$, no equilibrium state gives positive weight to any cylinder which does not intersect $\mathbb{K}$, which means that any equilibrium state is supported into $\mathrm{TM}$. Now, we recall that $\mathbb{K}$ is uniquely ergodic, thus there is only one equilibrium state.

The theorem is proved.

\bibliographystyle{alpha}
\bibliography{biblioBCHL}
\end{document}